\NewDocumentCommand{\tens}{t_}
 {%
  \IfBooleanTF{#1}
   {\tensop}
   {\otimes}%
 }
\NewDocumentCommand{\tensop}{m}
 {%
  \mathbin{\mathop{\otimes}\displaylimits_{#1}}%
 }
\tikzset{Isom/.style={above,every to/.append style={edge node={node [sloped, auto=false]{$\sim$}}}}}
\newcommand\simto{\stackrel{\textstyle\sim}{\smash{\longrightarrow}\rule{0pt}{0.4ex}}}
\renewcommand{\O}{\mathscr{O}}
\newcommand{\Z}{\mathbf{Z}}
\newcommand{\dublet}{{\bullet,\bullet}}
\renewcommand{\P}{\mathbf{P}}
\newcommand{\F}{\mathcal{F}}
\newcommand{\Fp}{\mathbf{F}_p}
\newcommand{\G}{\mathcal{G}}
\newcommand{\spec}{{\rm Spec}}
\newcommand{\sph}{{\rm Sph}}
\newcommand{\spf}{{\rm Spf}}
\newcommand{\Jac}{{\rm Jac}}
\newcommand{\Hom}{{\rm Hom}}
\newcommand{\colim}{\varinjlim}
\newcommand{\ol}[1]{\overline{#1}}
\newcommand{\mc}[1]{\mathcal{#1}}
\newcommand{\mf}[1]{\mathfrak{#1}}
\newcommand{\Et}{\'etale\xspace}
\newcommand{\hlfp}{hlfp\xspace}
\newcommand{\hfp}{hfp\xspace}
\newcommand{\hetale}{h-\'etale\xspace}
\newcommand{\et}{{\rm\acute{e}t}}
\newcommand{\het}{{\rm h\mbox{-}\et}}
\renewcommand{\H}{{\rm H}}
\newcommand{\cmpsn}{coherent GHGA comparison\xspace}
\newcommand{\shfcm}[1]{GHGA comparison on ${#1}$\xspace}
\newcommand{\spcite}[2]{\cite[\href{https://stacks.math.columbia.edu/tag/#1}{{#2} {#1}}]{stackp}}
\newcommand{\hfcite}[1]{\cite[#1]{hensfound}}
\newcommand{\phfcite}[1]{\cite{hensfound}, #1}
\newcommand{\propab}{Proposition}
\newcommand{\lemab}{Lemma}
\newcommand{\exab}{Example}
\newcommand{\thmab}{Theorem}
\newcommand{\corab}{Corollary}
\newcommand{\propcxyhglobsec}{\propab{} 3.1.15}
\newcommand{\exgrecowrong}{\exab{} 3.1.16}
\newcommand{\lemmodpush}{\lemab{} 3.2.1}
\newcommand{\lemhensfaithexact}{\lemab{} 3.2.3}
\newcommand{\lemmodpullback}{\lemab{} 3.2.4}
\newcommand{\lemfancypullback}{\lemab{} 3.2.5}
\newcommand{\thmthmbpos}{\thmab{} 3.2.6}
\newcommand{\lemhogen}{\lemab{} 3.2.9}
\newcommand{\prophensffulexact}{\propab{} 3.2.10}
\newcommand{\propflathenset}{\propab{} 5.2.10}
\newcommand{\propglobcatequiv}{\propab{} 5.3.3}
\newcommand{\thmqcohhet}{\thmab{} 5.3.5}
\newcommand{\lemstalkhet}{\lemab{} 5.3.7}
\newcommand{\lemthmbhet}{\lemab{} 5.3.8}
\newcommand{\thmhetzarcoh}{\thmab{} 5.3.9}
\newcommand{\corhetzarcmpsn}{\corab{} 5.3.11}
\newtheorem{theorem}{Theorem}[subsection]
\renewcommand{\thetheorem}{%
  \ifnum\value{subsection}>0
    \thesubsection
  \else
    \thesection
  \fi
  .\arabic{theorem}%
}
\newtheorem{corollary}[theorem]{Corollary}
\newtheorem{proposition}[theorem]{Proposition}
\newtheorem{lemma}[theorem]{Lemma}
\theoremstyle{definition}
\newtheorem{definition}[theorem]{Definition}
\newtheorem{remark}[theorem]{Remark}
\newenvironment{example}
  {\pushQED{\qed}\examplex}
  {\popQED\endexamplex}
\author{Sheela Devadas\thanks{University of Washington, Seattle, WA, USA. \href{mailto:sheelad@uw.edu}{sheelad@uw.edu}}}
\title{GAGA for Henselian schemes}
\date{}
\begin{document}
\maketitle
\begin{abstract} The global analogue of a Henselian local ring is a Henselian pair---a ring $R$ and an ideal $I$ which satisfy a condition resembling Hensel's lemma regarding lifting coprime factorizations of monic polynomials over $R/I$ to factorizations over $R$. The geometric counterpart is the notion of a Henselian scheme, which can serve as a substitute for formal schemes in applications such as deformation theory. 

In this paper we prove a GAGA-style cohomology comparison result for Henselian schemes in positive characteristic, making use of a ``Henselian \Et'' topology defined in previous work in order to leverage exactness of finite pushforward for abelian sheaves in the \Et topology of schemes. We will also discuss algebraizability of coherent sheaves on the Henselization of a proper scheme, proving (without a positive characteristic restriction) algebraizability for coherent subsheaves. We can then deduce a Henselian version of Chow's theorem on algebraization and the algebraizability of maps between Henselizations of proper schemes.\\
\textbf{Keywords:} Henselian schemes, Henselian pairs, cohomology comparison, GAGA-style theorems\\
\textbf{MSC:} 14A20, 13J15
\end{abstract}\tableofcontents

\section{Introduction}\label{sec:intro}

In this paper we will discuss Henselian schemes, which are a model for algebraic tubular neighborhoods in algebraic geometry. More specifically, we will consider proper schemes over a Henselian base ring and how they compare to their Henselizations. 

As discussed in Section~\ref{chap:def}, an affine Henselian scheme $\sph(A,I)$ for a Henselian pair $(A,I)$ (a ring and ideal satisfying a condition resembling Hensel’s lemma) is a locally ringed space with underlying space $V(I) \subseteq \spec(A)$, and equipped with a richer sheaf of rings---namely, on distinguished affine opens and on stalks, it is the Henselization with respect to $I$ of the structure sheaf of $\spec(A)$. These affine Henselian schemes can be glued together to form general Henselian schemes. One example of a Henselian scheme is the Henselization of a scheme along a closed subscheme, which yields a Henselian scheme that has the same underlying space as the closed subscheme and a structure sheaf which is in some sense the Henselization of the structure sheaf of the original scheme (as defined in \cite{hensfound}). 

It is natural to ask whether similar Henselian GAGA (abbreviated as GHGA) comparison theorems to those of formal GAGA (abbreviated as GFGA) exist for Henselian schemes, such as cohomology comparison between coherent sheaves on a proper scheme over a Henselian base ring and the pullback to the Henselization of the scheme, or ``algebraizability'' for coherent sheaves on the Henselization of a scheme in the form of a categorical equivalence between coherent sheaves on a proper scheme over a Henselian base ring and coherent sheaves on the Henselization. 

\subsection{Results}\label{sub:results}

We will prove a general GHGA statement regarding proper schemes over Henselian pairs in positive characteristic.

\begin{restatable*}[Henselian cohomology comparison]{theorem}{thmposcharnonnoeth}\label{thm:poscharnonnoeth} Let $(A,I)$ be a Henselian pair such that $A$ has characteristic $p > 0$, and $X$ a proper $A$-scheme with $I$-adic Henselization $X^h$, a Henselian scheme over $\sph(A)$. 

Then for any quasi-coherent sheaf $\G$ on $X$ and any $j \ge 0$, the canonical cohomology comparison map $\H^j(X,\G) \to \H^j(X^h,\G^h)$ is an isomorphism.\end{restatable*}

The proof proceeds by a series of reductions, using the ``h-\'{e}tale" topology discussed in previous work \cite{hensfound} to isolate the role of the $p$-torsion hypothesis to the study of the cohomology of the structure sheaf on $\P^1$.

It seems to remain an open question whether every coherent sheaf on the Henselization of a proper scheme over a Henselian $\Fp$-algebra admits an algebraization. Using a characteristic 0 counterexample \cite{cohpropblog} to Theorem~\ref{thm:poscharnonnoeth} we will easily see in Example~\ref{ex:algfail} that this is not the case for the Henselized projective line in characteristic $0$.

Using Artin-Popescu approximation and formal GAGA over a complete Noetherian base, we will prove that subsheaves of algebraizable sheaves are algebraizable:

\begin{restatable*}[Algebraizability of subsheaves]{theorem}{thmsubshf}\label{thm:subshf}  Let $X \to \spec(A)$ be a proper and finitely presented morphism of schemes, with $(A,I)$ a Henselian pair. For $\F$ a finitely presented sheaf on $X$ and $\G$ a finitely presented $\O_{X^h}$-submodule of $\F^h$, there exists a finitely presented subsheaf $\G_1 \subset \F$ such that $\G_1^h \simeq \G$. \end{restatable*}

From Theorem~\ref{thm:subshf} we can deduce a Henselian version of Chow’s theorem \cite[Theorem V]{chowthm}, showing that a closed Henselian subscheme of Henselian projective space (or indeed of the Henselization of any proper, finitely presented scheme over a Henselian base) arises from a subscheme of the corresponding scheme.

\begin{restatable*}[Henselian Chow's Theorem]{corollary}{corhenschow}\label{cor:henschow}  Let $X \to \spec(A)$ be a proper and finitely presented morphism of schemes, with $(A,I)$ a Henselian pair. For a finitely presented closed Henselian subscheme $Y \subseteq X^h$, there exists a finitely presented subscheme $Z \subseteq X$ with $Z^h = Y$ as Henselian subschemes of $X^h$.  \end{restatable*}

In past work \hfcite{\prophensffulexact} it was shown that the pullback functor on coherent sheaves between a proper scheme over a Noetherian complete affine base and its Henselization is exact and fully faithful. We use Theorem~\ref{thm:subshf} and this past work to relax the completeness hypothesis and to consider the essential image of this pullback functor, proving that:

\begin{restatable*}[Henselian GAGA]{corollary}{coressimg}\label{cor:essimg}  Let $(A,I)$ be a Noetherian Henselian pair, and let $X$ be a proper $A$-scheme with $I$-adic Henselization $X^h$. Then the functor $$(\cdot)^h: \mathbf{Coh}(X) \to \mathbf{Coh}(X^h)$$ (taking a coherent sheaf $\F$ on $X$ to its pullback $\F^h$ on $X^h$) is exact and fully faithful, and its essential image is closed under subobjects and quotients. 
\end{restatable*} 

It also follows that coherent ideal sheaves, hence maps, are algebraizable in any characteristic.

\begin{restatable*}[Algebraizability of maps]{corollary}{corhensmapsalg}\label{cor:hensmapsalg}Let $X \to \spec(A), Y \to \spec(A)$ be  proper and finitely presented morphisms of schemes, with $(A,I)$ a Henselian pair. Then the map $$\Hom_{\spec(A)}(X,Y) \to \Hom_{\sph(A)}(X^h,Y^h)$$ is bijective.
\end{restatable*}

Hence for any Henselian $A$, the functor taking a proper $A$-scheme to its Henselization is fully faithful.

\subsection{Motivation and history}\label{sub:motivhist}

F. Kato has proved a Henselian cohomology comparison in degree $0$---as well as the fact that algebraizability is inherited by coherent subsheaves---in the case of a valuation base ring that is Henselian with respect to a principal ideal \cite{kato17}. We will prove a cohomology comparison for a more general base (under a $p$-torsion hypothesis) in Theorem~\ref{thm:poscharnonnoeth} and an algebraizability result for finitely presented sheaves over a general base Henselian ring without a $p$-torsion hypothesis in Theorem~\ref{thm:subshf}. It was shown by de Jong in \cite{cohpropblog} that in characteristic $0$ or mixed characteristic, cohomology comparison in degree $1$ can fail even for the structure sheaf on the projective line, so the $p$-torsion hypothesis is needed for our cohomology comparison results.

Our method of proof differs from the conventional approach to proving GAGA and GFGA theorems, since directly computing the cohomology of twists $\O(n)$ of the structure sheaf on projective space would be difficult in the Henselian setting due to lack of a concrete description for elements of the Henselization of a polynomial ring. Instead we will reduce to the study of the cohomology of the structure sheaf on $\P^1$ by leveraging a finite flat map $(\P^1)^{\times d} \to \P^d$, which we recall in Section~\ref{sec:prodlines}, as part of a series of reduction steps. However, in order to make use of this map, we must consider the effect of finite pushforward on the cohomology of a quasi-coherent sheaf on a Henselian scheme. 

For schemes, the higher direct images of a quasi-coherent sheaf by a finite morphism vanish. This is likely untrue in general for Henselian schemes, as we will discuss in more detail in Section~\ref{sub:overview}. We get around this issue using \hfcite{\corhetzarcmpsn}, which states that in positive characteristic the problem of \Et and \hetale comparison is {\it equivalent} to the comparison problem for Zariski topologies. Hence we can make our reductions of GHGA statements in the setting of \Et and \hetale cohomology and use the fact that finite pushforward on arbitrary abelian sheaves is {\it exact} for the \Et topology of schemes and hence for the \hetale topology of Henselian schemes. 

Another consequence of de Jong's counterexample \cite{cohpropblog} is that even in the case of the projective line in characteristic 0 (or in mixed characteristic) there are already failures of algebraizability for coherent sheaves, as we show in Example~\ref{ex:algfail}. However, we will be able to show in Theorem~\ref{thm:subshf} that coherent subsheaves of algebraizable coherent sheaves are algebraizable regardless of characteristic of the Henselian base ring. 

It follows from Theorem~\ref{thm:subshf} that for a proper scheme $X$ over a Noetherian Henselian affine base, the pullback functor $\mathbf{Coh}(X) \to \mathbf{Coh}(X^h)$ is exact and fully faithful, and its essential image is closed under subobjects and quotients, as we discuss in Corollary~\ref{cor:essimg}. 

Corollary~\ref{cor:essimg} is similar to a recent result in o-minimal geometry \cite[Theorem 1.4]{omin}. While o-minimal geometry concerns itself with topological spaces that are locally modeled on sets that are definable with regard to some o-minimal structure, Henselian geometry can be thought of similarly as an algebraic substitute for formal geometry. In fact the Henselization of sufficiently nice rings can be identified with the ``algebraic'' subring of their completion, as we review in Lemma~\ref{lem:hensalgcplt}.

However, while in the o-minimal setting the analogue of Theorem~\ref{thm:subshf} is deduced from the definable Chow's theorem result of Peterzil and Starchenko \cite[Corollary 4.5]{psomin}, in the Henselian case we proceed in reverse, deducing the Henselian Chow's theorem (Corollary~\ref{cor:henschow}) from Theorem~\ref{thm:subshf}.

Another similarity between Henselian geometry and o-minimal geometry is the fact that the Henselization functor $\mathbf{Coh}(X) \to \mathbf{Coh}(X^h)$ is {\it not} essentially surjective in general. We give a specific counterexample in Example~\ref{ex:algfail}. The definabilization functor $\mathbf{Coh}(X) \to \mathbf{Coh}(X^{def})$ is also not essentially surjective, as shown in \cite[Example 3.2]{omin} for $X=\mathbb{G}_m$. Although $\mathbb{G}_m$ is not proper, it is an appropriate counterexample as the o-minimal equivalent \cite[Theorem 1.4]{omin} of Theorem~\ref{thm:subshf} applies to $X$ which is a separated algebraic space of finite type.

\subsection{Dependence on previous work}\label{sub:dependence}

Many results in this paper depend on the results of \cite{hensfound}, some of which are reviewed in Section~\ref{chap:def}. (These results can also be found in my Ph.D. thesis \cite[Chapters 1-5]{sheelathesis}.) 

The GHGA cohomology comparison results of Section~\ref{chap:propcmpsn} depend on the Henselian \Et or ``\hetale'' topology defined in \cite{hensfound}. Finite pushforward is exact for the \hetale topology of Henselian schemes as a consequence of the equivalence of categories between the \hetale site of a Henselian scheme and the \Et site of the underlying scheme \hfcite{\propglobcatequiv}. Therefore (as discussed in Section~\ref{sub:motivhist}) we prove the reduction step 
Proposition~\ref{prop:finflatprojred} in the \Et/\hetale setting. This is possible because of the equivalence of Zariski and \hetale cohomology comparison in positive characteristic \hfcite{\corhetzarcmpsn}. 

In Section~\ref{chap:algzation}, Corollary~\ref{cor:essimg} and the counterexample Example~\ref{ex:algfail} depend on the results \hfcite{\lemhensfaithexact, \prophensffulexact} concerning the exactness and faithfulness of the Henselization functor $(\cdot)^h: \mathbf{Coh}(X) \to \mathbf{Coh}(X^h)$ for the Henselization $X^h$ of a scheme $X$ along a closed subscheme $Y$. Some of the lemmas on limits in the Appendix, which are used in this paper for the case of non-Noetherian rings, also depend on \cite{hensfound}. Lemma~\ref{lem:henslimcohom} depends on \hfcite{\lemmodpullback}, which describes the pullback of quasi-coherent sheaves on affine schemes to their Henselization. Lemma~\ref{lem:subschemelem} depends on the Henselian version of ``Nike's trick'' for distinguished affines \hfcite{\propab{} 4.2.1}.



\subsection{Acknowledgments}

This paper is based on Chapters 6 and 7 of my Ph.D. thesis \cite{sheelathesis}. I would like to thank my Ph.D. advisor Brian Conrad, Ravi Vakil, my postdoctoral mentor Max Lieblich, and Aise Johan de Jong for useful discussions. I would also like to thank the anonymous referee for many helpful comments, including the idea for Theorem~\ref{thm:nonnoethrelcmpsn}.

This material is based upon work supported by the National Science Foundation under Award No. 2102960. During the writing of my Ph.D. thesis I was supported by the NSF-GRFP fellowship (NSF Grant \#DGE-1147470) and by the Stanford Graduate Fellowship.

\section{Background and previous work}\label{chap:def}

In this section we review previous work from \cite{hensfound} regarding the Zariski cohomology and \hetale cohomology of a Henselian scheme which will be useful to us when proving our GHGA comparison theorems. The notion of a Henselian pair appears in \cite[Section 18.5]{ega44}, as well as the Henselization of a pair \cite[Section 18.6]{ega44}; Henselian pairs are also discussed in \cite[Chapter XI]{raynaudhens}. 
For a review of the definition of Henselian schemes and various useful statements about quasi-coherent sheaves on them, see \hfcite{Sections 2-3}.  

\begin{definition}\label{def:getususch} Given a Henselian scheme $(X,\O_X)$, the {\bf underlying scheme} of $X$ is the locally ringed space $(X,\O_X/\mc{I})$ where 
 $\mc{I}$ is the radical ideal sheaf of $\O_X$ comprising sections which vanish at the residue field of every point. 
 
We can see that $(X,\O_X/\mc{I})$ is a scheme by considering the case $X=\sph(A,I)$, for which this process gives the scheme $\spec(A/I)$.  The general case follows by gluing the affine schemes coming from each affine open. If $X$ is the Henselization of a scheme $Y$ along a reduced closed subscheme $Z$, then the preceding construction recovers $Z$. 
\end{definition}
\begin{definition}\label{def:het}
A morphism $f: (Y,\O_Y) \to (X,\O_X)$ of Henselian schemes is {\bf Henselian \'etale}, or {\bf \hetale}, if it can be locally described as a morphism of affine Henselian schemes $\sph(B,J) \to \sph(A,I)$ arising from a map of pairs $(A,I) \to (B,J)$ where $B \simeq R^h$ is the Henselization of an \Et $A$-algebra $R$, with $\sqrt{IB}=\sqrt{J}$. 

The condition of being \hetale is equivalent to being \hlfp or ``Henselian locally finitely presented''\footnote{Zariski-locally the Henselization of a finitely presented map of pairs}, flat, and satisfying an appropriate formal lifting criterion \hfcite{\propflathenset}; it follows that being \hetale can be checked using any Zariski covering.
\end{definition}

When proving our Henselian GAGA theorems, we will wish to make use of not just a Zariski cohomology comparison between a scheme and its Henselization, but also a cohomology comparison between the \Et topology of the scheme and the \hetale topology on its Henselization.

There is an equivalence of categories between the \hetale site of a Henselian scheme and \Et site of the underlying scheme:

\begin{proposition}[\cite{hensfound} \propglobcatequiv]\label{prop:globcatequiv} Let $(X,\O_X)$ be a Henselian scheme and let $(X_0,\O_{X_0})$ be the underlying scheme. The functor $\mf{F}$ from the category of Henselian schemes \hetale over $X$ to the category of schemes \Et over $X_0$, which sends $(Y,\O_Y)$ to the underlying scheme $(Y_0,\O_{Y_0})$, is an equivalence of categories.
\end{proposition}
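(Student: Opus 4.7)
The plan is to prove the equivalence affine-locally and then globalize, since both sides are compatibly Zariski-local on $X$: an \hetale morphism to $X$ is by definition locally of the form $\sph(R^h, IR^h) \to \sph(A,I)$ for $R$ an \Et $A$-algebra, and $\mf{F}$ sends such a morphism to $\spec(R/IR) \to \spec(A/I)$, which is \Et. So once the affine case is in hand, gluing is essentially formal.

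For the affine case, set $X = \sph(A,I)$ and $X_0 = \spec(A/I)$. Essential surjectivity is immediate from the defining property of a Henselian pair: the base-change functor $R \mapsto R/IR$ is an equivalence from \Et $A$-algebras to \Et $A/I$-algebras. Given $\ol{R}$ \Et over $A/I$, lift to $R$ \Et over $A$ and form $\sph(R^h, IR^h) \to \sph(A,I)$, which is \hetale; its underlying scheme is $\spec(R^h/IR^h) = \spec(R/IR) = \spec(\ol{R})$, using that Henselization of the pair $(R, IR)$ leaves the quotient by $IR$ unchanged.

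For full faithfulness in the affine case, write $Y_i = \sph(R_i^h, IR_i^h)$ with $R_i$ \Et over $A$. A morphism $Y_1 \to Y_2$ over $X$ is an $A$-algebra map $R_2^h \to R_1^h$. The universal property of Henselization of pairs (with $(R_1^h, IR_1^h)$ already Henselian) identifies these with $A$-algebra maps $R_2 \to R_1^h$, i.e.\ with $R_1^h$-sections of the \Et $R_1^h$-algebra $R_2 \otimes_A R_1^h$. Applying the Henselian-pair equivalence to $(R_1^h, IR_1^h)$ now sends these bijectively to $R_1/IR_1$-sections of $(R_2 \otimes_A R_1^h)/I(R_2 \otimes_A R_1^h) = (R_2/IR_2) \otimes_{A/I} (R_1/IR_1)$, which is $\Hom_{A/I}(R_2/IR_2, R_1/IR_1) = \Hom_{X_0}(Y_{1,0}, Y_{2,0})$. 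Chaining these bijections gives the desired identification, and by construction it is compatible with the map on underlying schemes.

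To globalize, full faithfulness follows because morphisms of Henselian schemes (and of schemes) satisfy Zariski descent on the source: pass to a common affine cover of $X$, invoke the affine case on each piece, and check agreement on overlaps by uniqueness in the affine identification. Essential surjectivity is the more delicate step: given \Et $f_0 \colon Y_0 \to X_0$, cover $X$ by affine Henselian opens $U_\alpha = \sph(A_\alpha, I_\alpha)$ over which $f_0$ becomes affine, lift each piece to an \hetale $V_\alpha \to U_\alpha$ by the affine case, and use affine full faithfulness on double intersections to produce canonical transition isomorphisms, which then automatically satisfy the cocycle condition on triple intersections by the same uniqueness. The one real piece of content is the Henselian-pair \Et equivalence, which is invoked twice (once at the base $(A,I)$ for essential surjectivity, and once at $(R_1^h, IR_1^h)$ for full faithfulness); everything else is bookkeeping, so the main potential obstacle is simply keeping the Zariski gluing compatible with the two applications of that equivalence.
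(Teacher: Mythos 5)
Your skeleton (affine case, then Zariski gluing) is the natural one, but the input you describe as ``the one real piece of content'' is false as stated: for a Henselian pair $(A,I)$, reduction $R \mapsto R/IR$ is \emph{not} an equivalence from all \Et $A$-algebras to \Et $A/I$-algebras (it is one for \emph{finite} \Et algebras). For $(A,I)=(\mathbf{Z}_p,(p))$ the \Et algebras $\mathbf{Z}_p$ and $\mathbf{Z}_p\times\mathbf{Q}_p$ have the same reduction, and the two distinct $\mathbf{Z}_p$-algebra maps $\mathbf{Z}_p\times\mathbf{Z}_p\to\mathbf{Z}_p\times\mathbf{Q}_p$ given by $(a,b)\mapsto(a,b)$ and $(a,b)\mapsto(a,a)$ agree mod $p$; so the functor is neither faithful nor injective on isomorphism classes. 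Your argument survives only because the two invocations use weaker, true statements, and those are what must be cited: (i) for essential surjectivity you need that every \Et $A/I$-algebra lifts to an \Et $A$-algebra over a Henselian pair --- this is a genuine theorem (Elkik-type lifting of smooth/\Et algebras; it does not follow formally from the definition, since the naive ``lift standard \Et pieces and glue'' fails because localizations $(A_f,I_f)$ are not Henselian pairs); (ii) for the Hom computation you only need bijectivity on \emph{sections}: for a Henselian pair $(B,J)$ and an \Et $B$-algebra $S$, the map $\Hom_B(S,B)\to\Hom_{B/J}(S/JS,B/J)$ is bijective --- existence of lifts is the standard lifting characterization of Henselian pairs, and uniqueness holds because two sections of an unramified algebra agree on a clopen subset of $\spec(B)$ and $J\subseteq\Jac(B)$ forces any clopen set containing $V(J)$ to be everything. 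Your chain works precisely because the target of the relevant Hom is the Henselian ring $R_1^h$ itself; general Hom-sets between \Et algebras do not reduce isomorphically, as the example shows, so a reader taking your stated ``equivalence'' at face value would be misled.

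The globalization is also misstated at one point: you cannot in general ``cover $X$ by affine Henselian opens over which $f_0$ becomes affine,'' because an \Et morphism need not be affine over an affine base (e.g.\ an open immersion with non-affine source). The correct bookkeeping is to cover $Y_0$ itself by affine opens each mapping into an affine open of $X_0$, lift each piece by the affine case, and glue the lifts along the (possibly non-affine) overlaps using the already-globalized full faithfulness, with the cocycle condition checked by faithfulness; similarly, for full faithfulness one should cover the source $Y_1$ (not only $X$), since an \hetale morphism with Henselian-affine source and target is only \emph{locally} of the standard form $\sph(R^h)\to\sph(A)$. These are repairable, but as written the covers are taken on the wrong spaces. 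With the two corrections above --- citing the \Et-lifting theorem for Henselian pairs and the unique-lifting-of-sections property instead of a categorical equivalence of all \Et algebras, and gluing over a cover of $Y_0$ --- your outline becomes the standard proof of this proposition.
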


Furthermore, quasi-coherent (Zariski) sheaves on Henselian schemes are in fact sheaves for the \hetale topology as well \hfcite{\thmqcohhet}; this fact underlies all our discussion of the \hetale site that follows. 

\begin{remark}\label{rmk:morhet} For a quasi-coherent sheaf $\F$ on a scheme $X$ with closed subscheme $Y$ (and Henselization $X^h$ along $Y$), the sheaf $(\F^h)_{\het}$ on the small \hetale site $(X^h)_{\het}$ of $X^h$ is the pullback of $\F_{\et}$ along the morphism of sites $(X^h)_{\het} \to X_{\et}$ corresponding to the morphism of locally ringed spaces $X^h \to X$. (See \hfcite{\lemfancypullback} and the discussion in \hfcite{\thmqcohhet}.) Thus to simplify notation, we will often write $(\F_{\et})^h$ for $(\F^h)_{\het}$.

It is also easily checked that pullback and ``finite pushforward'' (see \hfcite{\lemmodpush}) of quasi-coherent sheaves along morphisms of Henselian schemes is compatible with the functor $\F \mapsto \F_{\het}$. \end{remark}

It will be useful to be able to consider the stalks of sheaves on the \hetale site as in the following lemma. 

\begin{lemma}[\phfcite{\lemstalkhet}]\label{lem:stalkhet} Let $X$ be a Henselian scheme, with $X_0$ the underlying scheme of $X$. For $\ol{x}_0$ a geometric point of $X_0$ lying over $x \in X_0$, consider $x$ as a point of $X$ and write $\ol{x}$ for the geometric point of $X$ arising from $\ol{x}_0$. Then the following statements are true:

\begin{enumerate}[(i)]
\item the stalk of the structure sheaf $\O_{X_{\het}}$ of the small \hetale site at $\ol{x}$ is isomorphic to $\O_{X,x}^{\rm sh}$, the strict Henselization of the local ring $\O_{X,x}$ along its {\it maximal ideal},

\item if $X$ is the Henselization of a scheme $Y$ along a closed subscheme $Z$, then considering $\ol{x}_0$ as a geometric point of $Z=X_0 \subset Y$, the stalk of $\O_{Y_{\et}}$ at $\ol{x}_0$ is isomorphic to the stalk of $\O_{X_{\het}}$ at $\ol{x}$. 
\end{enumerate}
\end{lemma}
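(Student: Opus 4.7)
The plan for part (i) is to unwind the definition of the stalk as a filtered colimit over pointed \hetale neighborhoods and identify the result directly. First I would restrict to a Zariski-open affine neighborhood $X = \sph(A,I)$ containing $x$, writing $\mf{p}$ for the prime of $A$ corresponding to $x$. By Definition~\ref{def:het}, every pointed \hetale neighborhood $(U,\ol{u})$ of $\ol{x}$ is Zariski-locally of the form $\sph(R^h, IR^h) \to \sph(A,I)$ for a pointed \Et $A$-algebra $(R,\ol{u})$, with $R^h$ the $IR$-adic Henselization; moreover the sections of $\O_X$ on such a distinguished affine are $R^h$. The cofinal system of \hetale neighborhoods is therefore indexed by pointed \Et $A$-algebras at $\ol{x}$, giving
\[
\O_{X_{\het}, \ol{x}} \;=\; \colim_{(R,\ol{u})}\, R^h.
\]

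Next I would commute the Henselization past the colimit. The $I$-adic Henselization is left adjoint to the forgetful functor from Henselian pairs to pairs, so it preserves filtered colimits, yielding $\bigl(\colim_R R\bigr)^h$. The inner colimit $\colim_R R$ over pointed \Et $A$-algebras is the strict Henselization of $A_\mf{p}$ along its maximal ideal---a strictly Henselian local ring, hence already Henselian with respect to any ideal contained in its maximal ideal (in particular the image of $I$, since $x \in V(I)$). The outer Henselization step therefore does nothing, and the colimit is $(A_\mf{p})^{\rm sh}$. A universal-property argument (both rings are initial among strictly Henselian local rings receiving a local homomorphism from $A_\mf{p}$) shows that this agrees with the strict Henselization of the Henselian stalk $\O_{X,x} = A_\mf{p}^h$, namely $\O_{X,x}^{\rm sh}$.

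For part (ii), I would combine part (i) with the classical identification $\O_{Y_{\et}, \ol{x}_0} = \O_{Y,x}^{\rm sh}$ for \Et stalks on a scheme. The stalk $\O_{X,x}$ of the Henselian structure sheaf of $X = Y^h_Z$ is by construction an $I$-adic Henselization of $\O_{Y,x}$ (for $I$ cutting out $Z$), and the same universal-property comparison used above shows that $\O_{X,x}^{\rm sh} = \O_{Y,x}^{\rm sh}$. Combining these observations gives the claimed agreement of stalks.

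The main obstacle I anticipate is ensuring cofinality: pointed \hetale neighborhoods of $\ol{x}$ built from \Et $A$-algebras via Henselization must exhaust (up to refinement) all pointed \hetale neighborhoods of $\ol{x}$, which follows from the local description in Definition~\ref{def:het} but requires checking that passing to Zariski-opens and choosing a compatible lift $\ol{u}$ interact well with Henselization. The other subtle ingredient is verifying---in the possibly non-Noetherian setting of \cite{hensfound}---that Henselization of a local ring along a subideal of the maximal ideal does not affect its strict Henselization; I expect this to be a straightforward universal-property exercise, but it is the one point where care is required.
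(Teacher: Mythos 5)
This lemma is stated in the paper only as a citation of the companion work \cite{hensfound}; the paper contains no proof of it, so there is nothing internal to compare against, and judged on its own terms your argument is correct and is the natural one. Your computation of the stalk as $\colim_{(R,\ol{u})} R^h$ over pointed \Et $A$-algebras, commuting the pair-Henselization past the filtered colimit, and using that a strictly Henselian local ring is Henselian along any ideal contained in its maximal ideal, correctly yields $\O_{X_{\het},\ol{x}} \simeq (A_{\mf{p}})^{\rm sh}$; the remaining identification $(\O_{X,x})^{\rm sh} \simeq (A_{\mf{p}})^{\rm sh}$ does follow by the universal-property comparison you describe, the one point worth making explicit being that $\O_{X,x}$ (the Henselization of the pair $(A_{\mf{p}},IA_{\mf{p}})$) is local, since $IA_{\mf{p}}$ lands in the Jacobson radical of that Henselization, so its strict Henselization along its maximal ideal is meaningful and receives/produces the expected maps. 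The two caveats you flag also check out: cofinality of the distinguished affine neighborhoods follows because the local form in Definition~\ref{def:het} holds Zariski-locally and localizations of \Et algebras are again \Et over $A$, and part (ii) then follows exactly as you say by combining (i) with the classical description of \Et stalks on $Y$ and the fact that the stalks of $\O_X$ are the $I$-adic Henselizations of the stalks of $\O_Y$.
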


\begin{remark}\label{rmk:stalkhet} In the setting of Lemma~\ref{lem:stalkhet}(ii), a similar statement holds for a quasi-coherent sheaf $\F$ on $Y$. Letting $\iota: Z \to Y$ be the usual closed immersion, by \hfcite{\lemfancypullback} and the discussion in \hfcite{\thmqcohhet} we have $(\F^h)_{\het}=\iota^{-1}_\et(\F_\et)$ as sheaves on $Z_\et$. (Recall that $Z_\et$ is equivalent to $X_\het$ by Proposition~\ref{prop:globcatequiv}.) Then by the definition of the inverse image functor, the stalk of $(\F^h)_\het=(\F_\et)^h$ at $\ol{x}$ is isomorphic to the stalk of $\F_\et$ at $\ol{x}_0$ \spcite{03Q1}{Lemma}. 
\end{remark}

When computing \hetale cohomology on Henselian schemes, we will use the \hetale topology analogue of de Jong's ``Theorem B'':

\begin{theorem}[de Jong's ``Theorem B'']\label{thm:thmbpos} Let $(A,I)$ be a Henselian pair such that $A$ has characteristic $p > 0$. Then if $Z=\sph(A)$, for any quasi-coherent sheaf $\F$ on $X=\spec(A)$, the cohomologies $\H^j(Z,\F^h)$ are $0$ for $j > 0$. 
\end{theorem}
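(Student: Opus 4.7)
The plan is to reduce the computation of h-étale cohomology on $Z=\sph(A)$ to the vanishing of Zariski cohomology on the affine scheme $X=\spec(A)$, by passing through the étale site of the underlying scheme and invoking Gabber's affine analogue of proper base change for Henselian pairs.

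First I would translate the problem to étale cohomology of the underlying scheme $\spec(A/I)$ of $Z$. By Proposition~\ref{prop:globcatequiv}, the small h-étale site of $Z$ is equivalent to the small étale site of $\spec(A/I)$, and under this equivalence the sheaf $(\F^h)_\het$ corresponds to $\iota^{-1}_\et \F_\et$, where $\iota: \spec(A/I) \hookrightarrow X$ is the closed immersion (compare Remark~\ref{rmk:stalkhet}). Thus it suffices to show $\H^j\bigl(\spec(A/I)_\et,\, \iota^{-1}_\et \F_\et\bigr)=0$ for all $j>0$.

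Next I would exploit the positive characteristic hypothesis. Since $A$ is an $\Fp$-algebra, every quasi-coherent $\O_X$-module is annihilated by $p$, so the abelian sheaf $\F_\et$ on $X_\et$ is $p$-torsion. Gabber's affine analogue of proper base change then asserts that for any torsion abelian étale sheaf on $X$, restriction along $\iota$ induces an isomorphism on étale cohomology between $\spec(A)$ and $\spec(A/I)$. Applied to $\F_\et$, this reduces the desired vanishing to $\H^j(X_\et, \F_\et)=0$ for $j>0$. But for quasi-coherent sheaves on a scheme, étale and Zariski cohomology coincide (via the standard flat descent argument comparing the two sites), and since $X=\spec(A)$ is affine, Serre's vanishing gives $\H^j(X_\zar,\F)=0$ for $j>0$, completing the chain of reductions.

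The crux of the argument is the input from Gabber's theorem, whose requirement of torsion coefficients is exactly what forces the assumption on the characteristic. Without it, one cannot bridge the gap between étale cohomology on the closed subscheme $\spec(A/I)$ and on the ambient $\spec(A)$; indeed, de Jong's counterexample cited in the introduction shows that the conclusion genuinely fails in characteristic $0$ or mixed characteristic, even for the structure sheaf on $\P^1$.
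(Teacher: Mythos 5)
There is a genuine gap: you have proved the wrong cohomology. In this paper's notation, $\H^j(Z,\F^h)$ in Theorem~\ref{thm:thmbpos} is the \emph{Zariski} cohomology of the quasi-coherent sheaf $\F^h$ on the Henselian scheme $Z=\sph(A)$, not the \hetale cohomology. Your argument---site equivalence of Proposition~\ref{prop:globcatequiv}, the identification of $(\F^h)_\het$ with $\iota^{-1}_\et\F_\et$, Gabber's affine analogue of proper base change for the $p$-torsion sheaf $\F_\et$, and \Et-equals-Zariski plus Serre vanishing on the affine $\spec(A)$---is a correct derivation of $\H^j_\het(Z,\F^h)=0$, i.e.\ of (the global case of) Lemma~\ref{lem:thmbhet}. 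But you never return from \hetale cohomology of $(\F^h)_\het$ to Zariski cohomology of $\F^h$, and that passage is not formal: in this paper it is exactly Theorem~\ref{thm:hetzarcoh}, whose proof (see Remark~\ref{rmk:whyhetalechar0}) itself requires Theorem~\ref{thm:thmbpos} and Lemma~\ref{lem:thmbhet}, so quoting it here would be circular.

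The gap can be repaired by running your Gabber argument not only globally but over a basis of the Zariski topology of $Z$, and then descending via Leray. For a distinguished open, the pair $(A_f^h, IA_f^h)$ is again Henselian of characteristic $p$ and has the same closed subscheme $\spec(A_f/IA_f)$; Gabber applied there gives, in degree $0$, that the sections of $\iota^{-1}_\et\F_\et$ over $\spec(A_f/IA_f)$ are $M\otimes_A A_f^h=\F^h(D(f)\cap Z)$ (this degree-zero identification is precisely where the Henselization enters), and in higher degrees that the cohomology over these opens vanishes. Hence for the morphism of sites $\epsilon$ from $\spec(A/I)_\et$ (equivalently $Z_\het$) to the Zariski site of $Z$, one gets $\epsilon_*\iota^{-1}_\et\F_\et=\F^h$ and $R^q\epsilon_*\iota^{-1}_\et\F_\et=0$ for $q>0$, so the Leray spectral sequence identifies $\H^j(Z,\F^h)$ with $\H^j_\et(\spec(A/I),\iota^{-1}_\et\F_\et)$, and your global application of Gabber finishes the proof. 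Without this local step (or some equivalent bridge), the proposal establishes only the \hetale statement, not the theorem as stated; note also that the paper itself does not reprove the result but cites de Jong's argument.
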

\begin{proof}
This was proved by de Jong in \cite{thmbblog}; an exposition of the proof is also given in \hfcite{\thmthmbpos}.
\end{proof}

 In \cite{nothmbblog}, de Jong provides a counterexample to the analogous vanishing for Zariski cohomology for $A$ of characteristic $0$ or mixed characteristic (see also \hfcite{\propcxyhglobsec}). The same example works in the \hetale topology, so in the following result we cannot drop the assumption that $A$ is an $\mathbf{F}_p$-algebra.

\begin{lemma}[\phfcite{\lemthmbhet}]\label{lem:thmbhet} Let $(A,I)$ be a Henselian pair with $A$ an $\Fp$-algebra. Then for a quasi-coherent sheaf $\F$ on $\spec(A)$ and any affine object $Z$ of $\sph(A)_{\het}$, the \hetale cohomologies $\H^j_{\het}(Z,\F^h)$ are $0$ for $j > 0$.
\end{lemma}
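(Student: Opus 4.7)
Proof plan:

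The lemma extends Theorem~\ref{thm:thmbpos} from Zariski cohomology on $\sph(A)$ to \hetale cohomology on affine objects $Z \in \sph(A)_\het$. By Definition~\ref{def:het}, such a $Z$ takes the form $\sph(B,J)$ where $B = R^h$ is the Henselization of an \'etale $A$-algebra, so $(B,J)$ is itself a Henselian pair with $B$ an $\Fp$-algebra; furthermore, $\F^h|_Z$ is identified with the Henselization on $\sph(B,J)$ of the pullback quasi-coherent sheaf $\F \otimes_A B$ on $\spec(B)$ (via the compatibility of pullback and Henselization, e.g. \hfcite{\lemmodpullback}). Applying Theorem~\ref{thm:thmbpos} to the Henselian pair $(B,J)$ and the quasi-coherent sheaf $\F\otimes_A B$ then yields the Zariski vanishing $\H^j_\zar(Z, \F^h) = 0$ for $j > 0$.

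To upgrade this Zariski vanishing to \hetale vanishing I would use the Leray spectral sequence for the morphism of sites $\epsilon \colon Z_\het \to Z_\zar$,
\[
E_2^{p,q} = \H^p_\zar(Z, R^q \epsilon_* \F^h) \Longrightarrow \H^{p+q}_\het(Z, \F^h).
\]
Since quasi-coherent Zariski sheaves on Henselian schemes are already \hetale sheaves by \hfcite{\thmqcohhet}, we have $\epsilon_* \F^h = \F^h$ as Zariski sheaves, so the $q = 0$ row of $E_2$ vanishes in positive degree by the previous paragraph. It therefore suffices to show $R^q \epsilon_* \F^h = 0$ for $q > 0$.

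For this last vanishing, the stalks of $R^q\epsilon_*\F^h$ at a Zariski point $x \in Z$, corresponding to a prime $\mf{p} \subset B$ containing $J$, are filtered colimits of \hetale cohomologies over shrinking Zariski neighborhoods of $x$ and can be identified with the \hetale cohomology of the Zariski-local Henselian scheme $\sph((B_\mf{p})^h, \mf{p}(B_\mf{p})^h)$. By Proposition~\ref{prop:globcatequiv} this equals the \'etale cohomology of $\spec k(\mf{p})$ with coefficients that, via Lemma~\ref{lem:stalkhet} and Remark~\ref{rmk:stalkhet}, are determined by the strict Henselian stalk $M \otimes_A (B_\mf{p})^{\rm sh}$ (where $M$ is the $A$-module corresponding to $\F$). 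These Galois cohomologies vanish in positive degree by standard additive-Hilbert-$90$-style arguments applied to the filtered colimit structure of the strict Henselization, giving the desired $R^q\epsilon_*\F^h=0$.

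The main obstacle is the stalk identification and vanishing in the final step, which relies delicately on the sites-equivalence Proposition~\ref{prop:globcatequiv} together with the strict-Henselian stalk description of Lemma~\ref{lem:stalkhet}; the positive-characteristic hypothesis itself enters only through the application of Theorem~\ref{thm:thmbpos} in the first paragraph, as de Jong's counterexamples (\cite{nothmbblog, cohpropblog}) show that this Zariski vanishing — and hence the present \hetale vanishing — genuinely fails in characteristic $0$ and in mixed characteristic.
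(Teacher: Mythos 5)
Your first paragraph and the Leray set-up are fine: an affine object $Z$ of $\sph(A)_{\het}$ is $\sph(B,J)$ with $(B,J)$ again a Henselian pair over $\Fp$, Theorem~\ref{thm:thmbpos} gives the Zariski vanishing $\H^p(Z,\F^h)=0$ for $p>0$, and so everything hinges on $R^q\epsilon_*\F^h=0$ for $q>0$. That last step is where the argument breaks. The Zariski stalk of $\O_Z$ at a point $x$ corresponding to $\mf{p}\supseteq J$ is the Henselization of $B_{\mf{p}}$ along $JB_{\mf{p}}$, \emph{not} along $\mf{p}B_{\mf{p}}$, so the local object controlling the stalk is $\sph\bigl((B_{\mf{p}})^h,\,J(B_{\mf{p}})^h\bigr)$; its underlying scheme is the spectrum of the local ring of $Z_0=\spec(B/J)$ at $x$, a local affine scheme with in general many points, and not $\spec k(\mf{p})$. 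Consequently Proposition~\ref{prop:globcatequiv} does not convert the stalk of $R^q\epsilon_*\F^h$ into Galois cohomology of the residue field, and the additive Hilbert--90 vanishing you invoke is simply not available. What the stalk actually is (even granting the unproved continuity statement that the colimit over shrinking Zariski neighborhoods computes the cohomology of their limit) is an \hetale cohomology group of exactly the kind the lemma asserts to vanish, now over the Henselian pair $((B_{\mf{p}})^h,J(B_{\mf{p}})^h)$ --- so as written the key step is circular. Relatedly, the claim that characteristic $p$ enters only through Theorem~\ref{thm:thmbpos} is not credible: de Jong's example \cite{nothmbblog} shows that already Zariski $\H^1$ of a quasi-coherent sheaf on an \emph{affine} Henselian scheme can be nonzero in characteristic $0$ (and the paper notes the same example works in the \hetale topology), and the stalkwise vanishing you need is a statement of precisely that local nature, so it cannot follow from a characteristic-free Galois-cohomology argument.

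For comparison, the paper does not reprove this lemma here; it quotes it from \hfcite{\lemthmbhet}. With the tools already quoted in Section~2, one workable (non-circular) route is: use Proposition~\ref{prop:globcatequiv} together with Lemma~\ref{lem:stalkhet}/Remark~\ref{rmk:stalkhet} to identify $\H^j_{\het}(Z,\F^h)$ with $\H^j_{\et}(Z_0,\iota^{-1}_{\et}\F_{B,\et})$ for $\iota\colon Z_0\hookrightarrow\spec(B)$ the closed immersion and $\F_B$ the pullback of $\F$ to $\spec(B)$; observe that since $B$ is an $\Fp$-algebra the abelian sheaf $\F_{B,\et}$ is killed by $p$, hence torsion, so Gabber's affine analogue of proper base change for the Henselian pair $(B,J)$ identifies this with $\H^j_{\et}(\spec(B),\F_B)$; and the latter vanishes for $j>0$ because \'etale cohomology of quasi-coherent sheaves on an affine scheme agrees with Zariski cohomology. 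This also makes transparent where the positive-characteristic hypothesis is genuinely used (torsionness of the coefficients), consistent with the known failure in characteristic $0$.
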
 

Our cohomology comparison with the Henselization of a scheme uses the natural base change map comparing higher direct images for a map of schemes and the corresponding map of their Henselizations. For later cross-referencing purposes, we record this standard fact here:

\begin{lemma}\label{lem:basechangemap} Let $(A,I)$ be a Henselian pair and $f: X \to Y$ a morphism of $A$-schemes. Consider the commutative diagram

\begin{center}
\begin{tikzcd}
X^h \arrow[r,"p_X"] \arrow[d,"f^h"]& X \arrow[d,"f"]\\
Y^h \arrow[r,"p_Y"] &Y
\end{tikzcd}
\end{center} \vspace{-.25in}

for $X^h$ and $Y^h$ the $I$-adic Henselizations of $X$ and $Y$ respectively, and $p_X,p_Y$ the canonical maps $X^h \to X, Y^h \to Y$. 

There exists a canonical $\delta$-functorial $\O_{Y^h}$-linear morphism $(R^jf_*\F)^h \to R^jf^h_*(\F^h)$ for $\O_X$-modules $\F$.
\end{lemma}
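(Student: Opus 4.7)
The construction is a standard piece of Grothendieck-style base change machinery, so the plan is to obtain the map from the two adjunctions $(p_X^*, (p_X)_*)$ and $(p_Y^*, (p_Y)_*)$ together with the square equality $f \circ p_X = p_Y \circ f^h$. First I would apply $Rf_*$ to the unit of adjunction $\F \to (p_X)_* p_X^* \F = (p_X)_*\F^h$, obtaining a morphism $Rf_* \F \to Rf_*\, R(p_X)_* \F^h$ in $D^+(\O_Y)$. Composition of derived functors together with $f \circ p_X = p_Y \circ f^h$ rewrites the right-hand side as $R(p_Y)_* R f^h_* \F^h$, and the $(p_Y^*, R(p_Y)_*)$ adjunction then yields the desired morphism $p_Y^* R f_* \F \to R f^h_* \F^h$ in $D^+(\O_{Y^h})$.

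To extract the degree-wise map $(R^j f_* \F)^h \to R^j f^h_*\F^h$ I would take $H^j$ of both sides. The key input is exactness of $p_Y^*$: on any affine open of $Y$ the functor $p_Y^*$ is tensor product with the Henselization of a pair, which is a filtered colimit of \Et algebras and hence flat. Consequently $H^j(p_Y^* Rf_* \F) = p_Y^* R^j f_* \F = (R^j f_* \F)^h$, and one reads off the stated morphism. Equivalently, one can assemble this map concretely from edge maps: the unit $\F \to (p_X)_* \F^h$ after applying $R^j f_*$, then the Leray edge map $R^j f_* (p_X)_* \F^h \to R^j (f p_X)_* \F^h = R^j (p_Y f^h)_* \F^h$, then the Leray edge map $R^j (p_Y f^h)_* \F^h \to (p_Y)_* R^j f^h_* \F^h$, and finally passage across the $(p_Y^*, (p_Y)_*)$ adjunction.

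For $\delta$-functoriality, the derived-category construction is visibly triangulated—it sends distinguished triangles to distinguished triangles by functoriality of $R f_*$, $R(p_Y)_*$, and the adjunctions—and exactness of $(\cdot)^h = p_X^*$ (once again from flatness of Henselization) ensures that any short exact sequence of $\O_X$-modules yields a short exact sequence of Henselizations on $X^h$. Taking $H^j$ of the resulting morphism of triangles produces a morphism of long exact sequences compatible with connecting homomorphisms. I do not expect a serious obstacle here: beyond flatness of Henselization, everything is formal naturality of units, counits, and (via the spectral sequence route) edge maps; the only point that genuinely needs to be recorded is the exactness input that legitimizes commuting $H^j$ past $p_Y^*$ and that controls the behavior of Henselization on short exact sequences.
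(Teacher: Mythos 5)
Your construction is correct, and it is essentially the paper's argument: the paper proves this lemma by simply citing the Stacks Project base change lemma (Tag 02N7) for a commutative square of ringed spaces with flat horizontal arrows, and your adjunction/edge-map construction, with flatness of the Henselization maps $p_X,p_Y$ supplying the exactness of $(\cdot)^h$ and the commutation of $H^j$ with $p_Y^*$, is exactly the construction underlying that cited result. The only loose phrase is describing $p_Y^*$ on an affine open as tensoring with the Henselization (literally true only for quasi-coherent modules, while the lemma allows arbitrary $\O_X$-modules), but stalkwise flatness of the Henselization is all you actually use, so nothing breaks.
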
 \begin{proof}  This follows from \spcite{02N7}{Lemma}, which is in the more general setting of ringed spaces with flat horizontal maps in the commutative square. \end{proof} 

%

The functor $(\cdot)^h: \mathbf{Coh}(X) \to \mathbf{Coh}(X^h)$ of pullback of coherent sheaves from a proper scheme $X$ over a base ring $A$ to the Henselization $X^h$ has been discussed in \cite{hensfound}. Exactness and faithfulness for any base is proved in \hfcite{\lemhensfaithexact}, and we can deduce fullness in the complete case from formal GAGA \cite[Theorem 5.1.4]{ega31} (see \hfcite{\lemhogen}). 


\begin{proposition}[\phfcite{\prophensffulexact}]\label{prop:hensffulexact}  Let $(A,I)$ be a Noetherian Henselian pair with $I$-adically complete $A$, and let $X$ be a proper $A$-scheme with $I$-adic Henselization $X^h$. Then the functor $(\cdot)^h: \mathbf{Coh}(X) \to \mathbf{Coh}(X^h)$ is exact and fully faithful.
\end{proposition}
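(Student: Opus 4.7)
The plan is to reduce fullness of $(\cdot)^h$ to formal GAGA. Exactness and faithfulness are already established in \hfcite{\lemhensfaithexact}, so only fullness remains to be shown. Since $A$ is $I$-adically complete, the $I$-adic formal completion $\hat X$ of $X$ is defined, and the formal completion functor $\mathbf{Coh}(X) \to \mathbf{Coh}(\hat X)$ factors canonically as
\[
\mathbf{Coh}(X) \xrightarrow{(\cdot)^h} \mathbf{Coh}(X^h) \xrightarrow{(\cdot)^{\wedge}} \mathbf{Coh}(\hat X),
\]
where $(\cdot)^{\wedge}$ denotes the completion of a Henselian coherent sheaf (defined locally by completing finitely generated modules over Henselizations). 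The compatibility $(\F^h)^{\wedge} \simeq \widehat{\F}$ underlying this factorization is essentially the content of \hfcite{\lemhogen}. By formal GAGA \cite[Theorem 5.1.4]{ega31}, the composition is an equivalence of categories, hence in particular fully faithful.

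To conclude I would invoke the elementary categorical fact that if $G$ is faithful and $G \circ F$ is full, then $F$ is full. Given $\phi: \F^h \to \G^h$ in $\mathbf{Coh}(X^h)$, formal GAGA applied to $\widehat{\phi}$ produces a morphism $\phi_0: \F \to \G$ in $\mathbf{Coh}(X)$ with $\widehat{\phi_0} = \widehat{\phi}$. Unwinding via the factorization above gives $(\phi_0^h)^{\wedge} = \phi^{\wedge}$, so faithfulness of $(\cdot)^{\wedge}: \mathbf{Coh}(X^h) \to \mathbf{Coh}(\hat X)$ forces $\phi_0^h = \phi$, establishing fullness of $(\cdot)^h$.

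It therefore remains to prove faithfulness of $(\cdot)^{\wedge}$ on $\mathbf{Coh}(X^h)$. This is local on $X^h$; on an affine patch $\sph(B^h, IB^h)$ (where $B$ is a Noetherian affine chart of $X$ and $B^h$ is the $I$-adic Henselization of $B$), a coherent sheaf corresponds to a finitely generated $B^h$-module, and it suffices to check that a $B^h$-linear map $M \to N$ of finitely generated modules vanishing after $I$-adic completion is itself zero. This follows from faithful flatness of $\widehat{B^h}$ over $B^h$, which in turn rests on $I$ lying in the Jacobson radical of the Henselian ring $B^h$ together with the Noetherian hypothesis via Krull's intersection theorem.

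The main obstacle I anticipate is confirming the global compatibility $(\F^h)^{\wedge} \simeq \widehat{\F}$ on all of $X$ rather than only patchwise, for which \hfcite{\lemhogen} is the crucial previously-established input; beyond that, the argument is a clean synthesis of formal GAGA, local faithfulness of the Henselian-to-formal completion, and basic category theory.
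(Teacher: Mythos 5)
Your argument is correct, and it reaches fullness by a slightly different decomposition than the one the paper relies on. The paper simply quotes \hfcite{\prophensffulexact}: exactness and faithfulness come from \hfcite{\lemhensfaithexact}, and fullness is deduced from formal GAGA via the degree-zero comparison in the complete case \hfcite{\lemhogen}, i.e.\ one identifies $\Hom_{X^h}(\F^h,\G^h)$ with $\H^0(X^h,\mathcal{H}om(\F,\G)^h)$ and compares with $\H^0(X,\mathcal{H}om(\F,\G))$. You instead work at the level of the categories themselves: factor the completion functor as $(\cdot)^\wedge\circ(\cdot)^h$, invoke formal GAGA for fullness of the composite, and prove faithfulness of $(\cdot)^\wedge\colon\mathbf{Coh}(X^h)\to\mathbf{Coh}(\hat X)$ locally from faithful flatness of $B^h\to B^\wedge$ (exactly the content of \spcite{0AGV}{Lemma}, which the paper uses elsewhere), then apply the standard fact that $G$ faithful and $G\circ F$ full forces $F$ full. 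Both proofs are sandwich arguments through $\hat X$; yours avoids sheaf-Hom and the $\H^0$ comparison, at the cost of needing the module description of coherent sheaves on Henselian affines from \cite{hensfound} and the well-definedness of $(\cdot)^\wedge$ on $\mathbf{Coh}(X^h)$ --- which is unproblematic, since it is just pullback along $\hat X\to X^h$, as the paper itself uses in the proof of Theorem~\ref{thm:subshf}. One small correction: the compatibility $(\F^h)^\wedge\simeq\widehat{\F}$ is not really the content of \hfcite{\lemhogen} (that lemma is the $\H^0$ comparison under completeness); it is simply functoriality of pullback along the composite $\hat X\to X^h\to X$, checked on affines via $(B^h)^\wedge\simeq B^\wedge$ and $M\otimes_B B^h\otimes_{B^h}B^\wedge\simeq M\otimes_B B^\wedge$, so your proof does not actually need that lemma at all.
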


In this paper, we will relax (in Corollary~\ref{cor:essimg}) the completeness hypothesis of Proposition~\ref{prop:hensffulexact}.

Just as we have a Zariski cohomology comparison map, we can compare \hetale and \Et cohomology as well. This rests on:

\begin{definition}\label{def:hetcmpsnmap}For a scheme $X$ with Henselization $X^h$ along a closed subscheme $Y$ and a quasi-coherent sheaf $\F$ on $X$, the equality $(\F^h)_{\het} = i_{\et}^{-1}(\F_{\et})$ for $i: Y \hookrightarrow X$ the canonical closed immersion gives rise to an {\bf \hetale cohomology comparison map} $\H^j(X_{\et},\F_\et) \to \H^j((X^h)_{\het},(\F^h)_{\het})$.
\end{definition}

\begin{remark}\label{rmk:ethetnotation} To simplify notation of cohomology groups, for a scheme $X$ and a quasi-coherent sheaf $\F$ on $X$ we often write $\H^j_\et(X,\F)$ for $\H^j(X_\et,\F_\et)$. Similarly for a Henselian scheme $Y$ and a quasi-coherent sheaf $\G$ on $Y$ we write $\H^j_\het(Y,G)$ for $\H^j(Y_\het,G_\het)$. 

In particular, when $Y=X^h, \G=\F^h$ we write $\H^j_\het(X^h,\F^h)$ for $\H^j((X^h)_\het, (\F^h)_\het)=\H^j((X^h)_\het,(\F_\et)^h)$ (because $(\F_\et)^h=(\F^h)_\het$; see Remark~\ref{rmk:morhet}), so the \hetale comparison map of Definition~\ref{def:hetcmpsnmap} is $\H^j_\et(X,\F) \to \H^j_\het(X^h,\F^h)$. 
\end{remark}

To conclude this background section, we include the key theorem and corollary of \cite{hensfound} in which it was proved that in positive characteristic, \hetale comparison is equivalent to Zariski comparison.

\begin{theorem}[\phfcite{\thmhetzarcoh}]\label{thm:hetzarcoh} Let $X$ be a Henselian scheme over a Henselian pair $(A,I)$ such that $A$ has characteristic $p > 0$. Then for any quasi-coherent sheaf $\G$ on $X$ and any $i \ge 0$, the natural map $\H^i(X,\G) \to \H^i_{\het}(X,\G)$ is an isomorphism.
\end{theorem}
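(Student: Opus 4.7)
The plan is to deduce the result from a Leray spectral sequence for the morphism of sites $\epsilon \colon X_\het \to X_\zar$, exploiting the local h-\'etale cohomological vanishing of Lemma~\ref{lem:thmbhet}. Every Zariski covering of an open $U \subseteq X$ is in particular an h-\'etale covering, so the identity on underlying spaces promotes to a continuous functor $X_\zar \to X_\het$, giving rise to such a morphism of sites $\epsilon$. For a quasi-coherent sheaf $\G$ on $X$, the h-\'etale sheaf $\G_\het$ (which exists thanks to the fact, invoked in the discussion preceding Remark~\ref{rmk:morhet}, that quasi-coherent Zariski sheaves are automatically h-\'etale sheaves) satisfies $\epsilon_\ast \G_\het = \G$ by inspection on Zariski opens. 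The Leray spectral sequence thus reads
\[ E_2^{p,q} = \H^p\!\bigl(X_\zar,\, R^q\epsilon_\ast \G_\het\bigr) \;\Longrightarrow\; \H^{p+q}_\het(X,\G), \]
whose $q=0$ edge map may be identified with the natural comparison map $\H^\ast(X,\G) \to \H^\ast_\het(X,\G)$ of the statement.

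The task therefore reduces to showing that $R^q\epsilon_\ast \G_\het = 0$ for every $q > 0$. By definition this higher direct image is the Zariski sheafification of the presheaf $U \mapsto \H^q_\het(U, \G|_U)$, so it suffices to verify the vanishing on a basis of the Zariski topology of $X$. Every point of $X$ admits a basis of affine open neighborhoods of the form $U = \sph(B,J)$ for a Henselian pair $(B,J)$; since $X$ is a Henselian scheme over $(A,I)$ with $A$ an $\Fp$-algebra, the ring $B$ is automatically an $\Fp$-algebra as well. The restriction $\G|_U$, being a quasi-coherent sheaf on the affine Henselian scheme $\sph(B,J)$, is of the form $\widetilde{M}^h$ for some $B$-module $M$, and hence Lemma~\ref{lem:thmbhet} yields $\H^q_\het(U, \G|_U) = 0$ for all $q > 0$, as desired.

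With all higher direct images under $\epsilon$ vanishing, the spectral sequence degenerates at $E_2$ on its bottom row and produces the required isomorphisms $\H^p(X,\G) \simeq \H^p_\het(X,\G)$ for every $p \ge 0$. The subtle bookkeeping point — and hence the main obstacle to be dispatched carefully — is the identification of this edge map with the intrinsic comparison map appearing in the statement; this is a standard compatibility, but one wants to verify that both arise by $\delta$-functorial extension of the tautological identity $\G \stackrel{\sim}{\to} \epsilon_\ast \G_\het$ in degree $0$. The genuine mathematical input is the local h-\'etale vanishing of Lemma~\ref{lem:thmbhet}, which in turn rests on de Jong's positive-characteristic Theorem B (Theorem~\ref{thm:thmbpos}) and is the sole source of the $\Fp$-algebra hypothesis.
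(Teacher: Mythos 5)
Your argument is correct: the Cartan--Leray comparison for the morphism of sites $\epsilon\colon X_{\het}\to X_{\zar}$, with $\epsilon_*\G_{\het}=\G$ and $R^q\epsilon_*\G_{\het}=0$ for $q>0$ checked on the basis of Henselian-affine opens (where $\G$ is associated to a module, so Lemma~\ref{lem:thmbhet} applies since those pairs are $\Fp$-algebras), is exactly the standard route, and the sole substantive input is the same key lemma the paper pairs with this statement. Note that this paper does not reprove the theorem—it is quoted from \cite{hensfound} (\thmhetzarcoh)—but your proof matches the expected argument behind that citation, modulo the routine identification of the Leray edge map with the canonical comparison map, which you correctly flag.
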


\begin{corollary}[\phfcite{\corhetzarcmpsn}]\label{cor:hetzarcmpsn} Let $(A,I)$ be a Henselian pair such that $A$ has characteristic $p > 0$, and let $X$ be an $A$-scheme with $I$-adic Henselization $X^h$. For a quasi-coherent sheaf $\F$ on $X$, the natural comparison map $\H^j(X,\F) \to \H^j(X^h,\F^h)$ for Zariski cohomologies is an isomorphism if and only if the \hetale comparison map $\H^j_{\et}(X,\F) \to \H^j_{\het}(X^h,\F^h)$ is an isomorphism.
\end{corollary}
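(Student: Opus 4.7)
The plan is to deduce this corollary directly from Theorem~\ref{thm:hetzarcoh} by fitting both comparison maps into a single commutative square:
\[
\begin{tikzcd}
\H^j(X,\F) \arrow[r] \arrow[d] & \H^j(X^h,\F^h) \arrow[d] \\
\H^j_\et(X,\F) \arrow[r] & \H^j_\het(X^h,\F^h)
\end{tikzcd}
\]
The horizontal arrows are the Zariski and \hetale comparison maps, while the vertical arrows are the canonical maps from Zariski to \Et or \hetale cohomology. Once the vertical arrows are identified as isomorphisms, the two horizontal arrows become identified as well, and the iff statement follows at once.

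Both vertical arrows are isomorphisms by Theorem~\ref{thm:hetzarcoh}. Applied to the Henselian scheme $X^h$ over the characteristic-$p$ Henselian pair $(A,I)$, the theorem gives the right-hand isomorphism directly. For the left-hand map, one may view the $A$-scheme $X$ as a (trivially) Henselian scheme over $(A,0)$, still a characteristic-$p$ Henselian pair; in this setup the \hetale site of $X$ coincides with $X_\et$, so Theorem~\ref{thm:hetzarcoh} yields $\H^j(X,\F) \simto \H^j_\et(X,\F)$. (This left-hand isomorphism is also a classical comparison result, independent of characteristic.)

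The remaining step is to check commutativity of the square. The Zariski comparison map is induced by the morphism of locally ringed spaces $p : X^h \to X$, and the \hetale comparison map of Definition~\ref{def:hetcmpsnmap} is induced by pullback along $p$ in the refined topology, after identifying $(X^h)_\het$ with the \Et site of the underlying scheme of $X^h$ via Proposition~\ref{prop:globcatequiv}. The compatibility $(\F^h)_\het = (\F_\et)^h$ recorded in Remark~\ref{rmk:morhet} exhibits the two comparison maps as components of a single morphism of ringed topoi over $p$, so commutativity of the square follows from 2-functoriality of derived pushforward.

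The main subtlety is exactly this compatibility — that the Zariski and \hetale comparison maps share the same underlying geometric input and behave coherently under the change-of-topology morphisms — but that is essentially built into Remark~\ref{rmk:morhet}. Granted this, the corollary reduces to a formal square-chase together with two applications of Theorem~\ref{thm:hetzarcoh}.
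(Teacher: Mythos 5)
Your proof is correct and is essentially the deduction the paper intends: Corollary~\ref{cor:hetzarcmpsn} is exactly the formal consequence of Theorem~\ref{thm:hetzarcoh} (applied to $X^h$) together with the classical Zariski/\Et comparison for quasi-coherent sheaves on the scheme $X$, fitted into the commutative square whose compatibility is supplied by Remark~\ref{rmk:morhet}. Your side remark deriving the left-hand vertical isomorphism by viewing $X$ as the trivial Henselian scheme over $(A,0)$ is an unnecessary but harmless variant of citing the classical result, and does not change the approach.
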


This equivalence of comparison problems in positive characteristic is essential to Section~\ref{chap:propcmpsn} due to non-exactness problems with finite pushforward of abelian sheaves in the Zariski topology. Since exactness of finite pushforward is necessary for some of the reduction steps in the proof of our GHGA comparison isomorphism in Theorem~\ref{thm:poscharnonnoeth}, we prove those reduction steps for the \hetale comparison problem rather than the Zariski comparison problem.

\section{Cohomology comparison on proper schemes}\label{chap:propcmpsn}

In this section and the following sections we consider comparison problems for Henselian schemes similar to formal GAGA (also called GFGA); we abbreviate ``Henselian GAGA'' as ``GHGA''. 

\begin{definition}\label{def:nhens}
A {\bf Noetherian Henselian pair} $(A,I)$ is a Henselian pair such that the ring $A$ is Noetherian.
\end{definition}

\begin{definition}\label{def:shfcm}
Let $(A,I)$ be a Henselian pair and $X$ a scheme over $\spec(A)$ with $I$-adic henselization $X^h$, a Henselian scheme over $\sph(A)$. For an $\O_X$-module $\F$, we say that $\F$ satisfies {\bf \shfcm{X}} if for any $j \ge 0$, the canonical \hetale cohomology comparison map $\H^j_{\et}(X,\F) \to \H^j_{\het}(X^h,\F^h)$ is an isomorphism.

If $A$ is Noetherian and $X$ is finite-type over $A$, we say that $X$ satisfies {\bf \cmpsn} if every coherent sheaf $\F$ on $X$ satisfies \shfcm{X}.
\end{definition}

\subsection{Overview}\label{sub:overview}

For a proper scheme $X$ over $\spec(A)$ for a Noetherian Henselian pair $(A,I)$, we wish to consider whether the comparison map $\H^i(X,\F) \to H^i(X^h,\F^h)$ is an isomorphism for coherent sheaves $\F$ on $X$. The conventional approach to proving GAGA and GFGA theorems begins by directly computing the cohomology of twists $\O(n)$ of the structure sheaf on projective space in the analytic or formal setting. However, that would be difficult to do in the setting of Henselian schemes, since it is difficult to concretely describe the elements of a Henselized polynomial ring. Furthermore, the idea of a ``graded'' $A$-module or a projective system of $A/I^n$-modules for $n \in \mathbf{N}$ does not have a natural counterpart in the Henselian setting. 

In fact it has been shown by de Jong that in characteristic 0 and mixed characteristic, \cmpsn does not hold for proper Henselian schemes \cite{cohpropblog}, even for $\F=\O_X$ on $X=\P_A^1$. Therefore we have to use alternative methods to prove \cmpsn theorems over $\mathbf{F}_p$.

Our proof of \cmpsn for proper schemes over a Noetherian Henselian base in positive characteristic proceeds as follows:

\begin{enumerate}[(I)]
\item In \hfcite{\corhetzarcmpsn} (stated here as Corollary~\ref{cor:hetzarcmpsn}) it was proven that in positive characteristic the problem of \Et and \hetale comparison is {\it equivalent} to the comparison problem for Zariski topologies. Therefore we can make the following reductions of GHGA statements in the setting of \Et and \hetale cohomology; see Remarks~\ref{rmk:whyhetale},~\ref{rmk:whyhetalechar0}.
\item We reduce \cmpsn for general projective schemes to the case of $(\P^1)^{\times d}$ by defining and making use of a finite flat map $(\P^1)^{\times d} \to \P^d$ (Section~\ref{sec:prodlines}). Specifically, we will begin our reduction steps by proving in Proposition~\ref{prop:finflatprojred} that for a finite flat surjection $P' \to P$ of projective schemes over a Henselian pair $(A,I)$, \cmpsn for $P'$ implies \cmpsn for $P$. 
\item We show in Proposition~\ref{prop:genhighdirim} that for maps satisfying a “relative comparison”, higher
direct images are compatible with pullback to the Henselization; this lets us reduce
further to the case of $\P^1$ in Proposition~\ref{prop:relp1}.
\item To reduce the proper case to the projective case and hence to the case of $\P^1$, we use Grothendieck's Unscrewing Lemma (Section~\ref{sec:propred}).
\item We complete the Noetherian case in Section~\ref{sec:cohprojline} by reducing \cmpsn for $\P^1$ to the case of the structure sheaf and then do some hands-on work; we do not need to work over $\mathbf{F}_p$ in the preceding reduction steps, but in Section~\ref{sec:cohprojline} we restrict ourselves to the setting of positive characteristic in order to return to the case of Zariski cohomology, which we can then compute. 
\item Finally in Section~\ref{sec:nonnoeth}, we will discuss the non-Noetherian case; we also extend to the case of a proper map over a base $X$ which satisfies GHGA comparison but is not necessarily affine. 

\end{enumerate}

\begin{remark}\label{rmk:whyhetale}
For schemes, the higher direct images of a quasi-coherent sheaf by a finite morphism vanish. However, this is likely {\it not true} in the setting of Henselian schemes in general.\footnote{A finite morphism of Henselian schemes refers to an \hlfp map $X \to Y$, which locally appears as a map of affine Henselian schemes $\sph(B,J) \to \sph(A,I)$ arising from a finite morphism of rings $A \to B$. The finite morphisms of Henselian schemes which we will consider are the Henselizations of finite morphisms of schemes, so we have no need for a general theory of finite morphisms in the Henselian setting.} 
The key issue is that a ring finite over a local ring which is Henselian for its maximal ideal is a finite product of local rings, but if the local base ring is Henselian along some ideal smaller than its maximal ideal, then in general we can only say that a finite algebra is semi-local; it is unknown if the higher cohomologies of quasi-coherent sheaves on the Henselian spectrum of a semi-local ring (Henselian for some ideal) vanish.

Because finite pushforward on arbitrary abelian sheaves is {\it exact} for the \Et topology of schemes and hence for the \hetale topology of Henselian schemes, the preceding issue does not arise if we make our reductions in the setting of \Et and \hetale cohomology. \end{remark}

\begin{remark}\label{rmk:whyhetalechar0} In characteristic 0 and mixed characteristic, de Jong showed that for a complete DVR $A$ the Zariski cohomology $\H^1((\P_A^1)^h,\O_{(\P_A^1)^h})$ does not vanish \cite{cohpropblog} (explained here in Example~\ref{ex:algfail}). It follows that the Zariski cohomology comparison map $\H^1(\P_A^1,\O_{\P_A^1}) \to \H^1((\P_A^1)^h,\O_{(\P_A^1)^h})$ is not an isomorphism in characteristic $0$; therefore we lose nothing by considering \hetale cohomology comparison instead. 

Although Corollary~\ref{cor:hetzarcmpsn} does not hold in characteristic 0 (its proof requires Theorem~\ref{thm:thmbpos} and Lemma~\ref{lem:thmbhet}), the argument given in \cite{cohpropblog} can be used to show nonvanishing of $\H^1_\het((\P_A^1)^h,\O_{(\P_A^1)^h})$ as well. Therefore \hetale cohomology comparison also cannot hold for $\P_A^1$, except in positive characteristic. 
\end{remark}

\subsection{Product of projective lines}\label{sec:prodlines}

The first step of reduction is to show that \cmpsn for projective $d$-space can be reduced to \cmpsn for the product of $d$ projective lines.

Fix some $d \ge 1$. For a ring $A$, we define a map of projective $A$-schemes $\varpi_A: (\P_A^1)^{\times d} \to \P_A^d$ by giving its values functorially as $$([\alpha_1,\beta_1 ],\dots,[\alpha_d,\beta_d]) \mapsto [f_0(\alpha,\beta),f_1(\alpha,\beta),\dots,f_d(\alpha,\beta)],$$ where the $f_i(\alpha,\beta)$ are homogeneous polynomials of degree $d$ in the $\alpha_j,\beta_j$ given by $$\prod_{j=1}^d (\alpha_j+\beta_jY)=\sum_{i=0}^d f_i(\alpha,\beta)Y^i.$$

\begin{proposition}\label{prop:pifinflat} The morphism $\varpi_A$ is finite, flat, and surjective.
\end{proposition}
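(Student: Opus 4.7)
The plan is first to reduce to the universal case $A = \Z$: since $\varpi_A$ is the base change of $\varpi_{\Z}$ along $\spec A \to \spec \Z$, and each of being finite, flat, or surjective is stable under base change of morphisms of schemes, it suffices to prove the three properties for $\varpi_\Z$.

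For surjectivity and finiteness, I would argue as follows. Checking surjectivity on geometric points, a $k$-point of $\P^d_\Z$ (with $k$ algebraically closed) is a nonzero polynomial $g(Y) = \sum_i g_i Y^i$ of degree at most $d$, up to scaling; since $k$ is algebraically closed, $g$ factors as $\prod_{j=1}^d(\alpha_j + \beta_j Y)$ with each $[\alpha_j,\beta_j] \in \P^1(k)$—factors with $\beta_j=0,\alpha_j \neq 0$ account for any shortfall in degree and absorb the leading coefficient of $g$—producing a preimage point. For finiteness, $\varpi_\Z$ is proper, since $(\P^1_\Z)^{\times d}$ is proper over $\spec \Z$ and $\P^d_\Z$ is separated over $\spec \Z$; and its fibers over geometric points are finite, consisting of at most $d!$ ordered factorizations of a degree-$\leq d$ polynomial. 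A proper morphism of Noetherian schemes with finite fibers is finite, so $\varpi_\Z$ is finite.

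For flatness, I would invoke miracle flatness. Both $(\P^1_\Z)^{\times d}$ and $\P^d_\Z$ are smooth of relative dimension $d$ over $\spec \Z$, hence regular Noetherian schemes of Krull dimension $d+1$; the fibers of $\varpi_\Z$ are zero-dimensional (as $\varpi_\Z$ is finite), matching the expected fiber dimension $0 = (d+1) - (d+1)$. The miracle flatness criterion—that a locally finite-type morphism from a Cohen--Macaulay scheme to a regular scheme with equidimensional fibers of the expected dimension is flat—then yields flatness.

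The main subtlety is the flatness assertion, but it follows cleanly from miracle flatness once the dimension counts and regularity are in hand. As a concrete sanity check, on the standard chart $U_0 = \{f_0 \neq 0\} = \spec \Z[y_1,\dots,y_d] \subset \P^d_\Z$ with $y_i = f_i/f_0$, the preimage under $\varpi_\Z$ is the chart $\spec \Z[t_1,\dots,t_d] \subset (\P^1_\Z)^{\times d}$ with $t_j = \beta_j/\alpha_j$, and the induced ring map is $y_i \mapsto e_i(t_1,\dots,t_d)$; the classical structure theorem for symmetric polynomials exhibits $\Z[t_1,\dots,t_d]$ as a free $\Z[e_1,\dots,e_d]$-module of rank $d!$, directly verifying finite flatness over this chart.
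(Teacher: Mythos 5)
Your proposal is correct and follows essentially the same route as the paper: reduce to $A=\Z$ by base change, get surjectivity and finite fibers on geometric points, combine properness with quasi-finiteness to conclude finiteness, and deduce flatness from Miracle Flatness. The only (harmless) variation is that you apply Miracle Flatness directly to $\varpi_\Z$ using regularity of both total spaces (where the needed equality of local-ring dimensions follows from finiteness plus the dimension formula over the universally catenary target), whereas the paper applies it between the geometric fibers over $\spec(\Z)$; your closing chart computation with elementary symmetric polynomials is a nice independent confirmation of finite flatness of rank $d!$ on the chart $\{f_0 \neq 0\}$.
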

\begin{proof} 

It suffices to consider the case $A=\Z$. It is immediately clear that $\varpi_\Z$ has finite fibers and that it is surjective (by considering its behavior on geometric points).  Furthermore, $\varpi_\Z$ is a morphism of projective $\Z$-schemes, so it is proper. Thus $\varpi_\Z$ is a finite morphism. Finally, we can see that $\varpi_\Z$ is flat by comparing dimensions of local rings in a straightforward use of ``Miracle Flatness'' applied between geometric fibers over $\spec(\Z)$ \cite[Theorem 23.1]{matscrt}.\end{proof}

For a fixed Noetherian Henselian pair $(A,I)$, we will use the map $\varpi_A$ to reduce \cmpsn for $\P_A^d$ to \cmpsn for  $(\P_A^1)^{\times d}$. More generally, we will show:

\begin{proposition}\label{prop:finflatprojred} Let $(A,I)$ be a Noetherian Henselian pair, and $\pi: P' \to P$ a finite flat surjection of projective schemes over $\spec(A)$. Then if $P'$ satisfies \cmpsn, so does $P$.
\end{proposition}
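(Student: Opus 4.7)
My plan is to resolve an arbitrary coherent $\F$ on $P$ by sheaves that are finite pushforwards from $P'$, and then deduce the \hetale comparison for $\F$ from the comparison for each resolving sheaf via a hypercohomology spectral sequence. The two crucial structural inputs are the faithful flatness of the $\O_P$-algebra $B := \pi_*\O_{P'}$ (a locally free $\O_P$-algebra of rank $\deg \pi$, using that $\pi$ is finite flat surjective) and the exactness of finite pushforward of abelian sheaves in the \Et and \hetale topologies, which is precisely the reason the paper shifts from Zariski to \hetale cohomology (Remark~\ref{rmk:whyhetale}).

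First I would form the augmented Amitsur complex attached to $\O_P \to B$ with coefficients in $\F$,
\[
0 \to \F \to \F \otimes_{\O_P} B \to \F \otimes_{\O_P} B^{\otimes 2} \to \F \otimes_{\O_P} B^{\otimes 3} \to \cdots,
\]
which is exact as a complex of coherent sheaves on $P$ by faithfully flat descent applied to the module $\F$ (not merely to the algebra $B$). Setting $C^p := \F \otimes_{\O_P} B^{\otimes(p+1)}$, the projection formula together with the identification of $B^{\otimes n}$ with the pushforward of $\O$ from the $n$-fold fiber power of $P'$ over $P$ rewrites $C^p \simeq \pi_*(\pi^*\F \otimes \pi^* B^{\otimes p})$, displaying each $C^p$ as a finite pushforward of a coherent sheaf on $P'$. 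Since $P'$ satisfies \cmpsn by hypothesis, the \hetale comparison map is an isomorphism on $\pi^*\F \otimes \pi^*B^{\otimes p}$ over $P'$; combining this with Leray degeneration for finite pushforward in the \Et and \hetale topologies, and with compatibility of Henselization with finite pushforward (\hfcite{\lemmodpush}) and tensor products, the comparison map is an isomorphism on each $C^p$ on $P$.

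Next I would compare the hypercohomology spectral sequences of $C^\bullet$ on both sides,
\[
E_1^{p,q} = \H^q_\et(P, C^p) \Rightarrow \H^{p+q}_\et(P, \F), \qquad {}'E_1^{p,q} = \H^q_\het(P^h, (C^p)^h) \Rightarrow \H^{p+q}_\het(P^h, \F^h),
\]
where the abutments are as stated because $C^\bullet$ resolves $\F$ and, by exactness of Henselization, $(C^\bullet)^h$ resolves $\F^h$ as the Amitsur complex of $\pi^h : (P')^h \to P^h$. Both spectral sequences converge strongly because coherent-sheaf cohomology on $P$ and $P^h$ has dimension at most $\dim P$, so nonzero $E_1^{p,q}$ is confined to the horizontal strip $0 \le q \le \dim P$ and every anti-diagonal has finitely many nonzero terms. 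The \hetale comparison maps assemble into a morphism of spectral sequences that is an isomorphism on $E_1$ by the previous step, hence on every subsequent page, on $E_\infty$, and on the abutments, yielding comparison for $\F$.

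The hard parts I expect are technical rather than structural: (i) verifying exactness of the Amitsur complex after tensoring with a possibly non-flat $\F$, which requires invoking faithfully flat descent for $\O_P$-modules at the sheaf level, and (ii) checking that Henselization intertwines with the full Amitsur construction---formation of $B$, its iterated tensor powers, pushforwards along $\pi$, and tensoring with $\F$---in a way compatible with the identification $(C^p)^h = \F^h \otimes_{\O_{P^h}} (B^h)^{\otimes(p+1)}$, so that the spectral sequence on $P^h$ really abuts to $\H^*_\het(P^h,\F^h)$ and the comparison of $E_1$-pages is an isomorphism entrywise.
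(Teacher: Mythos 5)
Your proposal is correct, and its skeleton is the same as the paper's: resolve $\F$ by sheaves of the form $\pi_*(\text{coherent on } P')$, note that each such term satisfies \shfcm{P} (this is exactly Lemma~\ref{lem:pipush}, i.e.\ the combination of Lemma~\ref{lem:picomm} and Lemma~\ref{lem:finpushhens}, which your ``Leray degeneration plus compatibility of Henselization with finite pushforward'' step re-derives), and finish with a map of first-quadrant hypercohomology spectral sequences. The only genuine difference is the choice of resolution: the paper builds it by hand, starting from the injection $\F \hookrightarrow \pi_*\pi^*\F$ (faithful flatness of $\pi$) and iterating on cokernels, whereas you take the canonical Amitsur complex $\F \otimes_{\O_P} B^{\otimes(\bullet+1)}$ for $B=\pi_*\O_{P'}$, whose exactness is faithfully flat descent for modules and whose terms are identified with $\pi_*(\pi^*\F\otimes\pi^*B^{\otimes p})$ by the projection formula for the affine morphism $\pi$. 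Your version costs two extra standard inputs (descent exactness, projection formula) but buys functoriality: the resolution is natural in everything, so its compatibility with Henselization and with the \hetale comparison maps (via Remark~\ref{rmk:morhet}) is automatic rather than something to be arranged.

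One small repair: your justification of strong convergence via ``coherent-sheaf cohomology on $P$ and $P^h$ has dimension at most $\dim P$'' is not available on the \hetale side at this stage --- the proposition is characteristic-free, and a bound on $\H^q_\het(P^h,\cdot)$ by $\dim P$ would essentially require the comparison (or a Theorem~B-type statement) that is only known in characteristic $p$. Fortunately it is also unnecessary: the resolution is concentrated in degrees $p\ge 0$ and cohomology vanishes in degrees $q<0$, so both $E_1$-pages are supported in the first quadrant; hence each anti-diagonal has finitely many nonzero entries and the filtration of each total degree is finite, which already gives convergence and lets the entrywise isomorphism on $E_1$ propagate to the abutments.
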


\begin{remark}\label{rmk:whynonnoeth}
In Lemmas \ref{lem:picomm}, \ref{lem:finpushhens}, and \ref{lem:pipush}, we omit the Noetherian assumption on $A$ and consider all quasi-coherent sheaves on $P'$, since Lemma~\ref{lem:pipush} will be used to reduce the non-Noetherian case for general proper schemes to the case of \cmpsn for proper schemes over a Noetherian base. For the proof of Proposition~\ref{prop:finflatprojred} we will assume that $A$ is Noetherian and consider only coherent sheaves. \end{remark}

First we show that a map $\pi$ as in Proposition~\ref{prop:finflatprojred} is compatible with pullback to the Henselization. 

\begin{lemma}\label{lem:picomm} Let $(A,I)$ be a Henselian pair, and $\pi: P' \to P$ a finite morphism of finite-type schemes over $\spec(A)$. Let  $(P')^h, P^h$ be the $I$-adic Henselizations of the  $A$-schemes $P',P$. Consider the canonical map $\pi^h$ on Henselizations such that the following diagram commutes:

\begin{centering} 

\begin{tikzcd}
(P')^h \arrow[r]\arrow[d,"\pi^h"] &P'\arrow[d,"\pi"]\\
P^h \arrow[r] & P
\end{tikzcd}

\end{centering}

Then for $\G$ a quasi-coherent sheaf on $P'$, the base change map $(\pi_*\G)^h \to \pi^h_*(\G^h)$ of Lemma~\ref{lem:basechangemap} is an isomorphism. In particular, via Remark~\ref{rmk:morhet}, the corresponding map $$((\pi_*\G)_{\et})^h=(\pi_{\et,*}\G_{\et})^h \to \pi^h_{\het,*}((\G_{\et})^h)=(\pi^h_*(\G^h))_{\het}$$ of sheaves on $(P^h)_{\het}$ is also an isomorphism.
\end{lemma}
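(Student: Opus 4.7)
The plan is to reduce to the case where $P$ is affine and verify the comparison map is an isomorphism on global sections, using that Henselization of a pair commutes with finite base change.

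First I would argue that the statement is Zariski-local on $P$: both $(\pi_*\G)^h$ and $\pi^h_*(\G^h)$ are quasi-coherent sheaves on $P^h$, and each construction is compatible with Zariski localization on $P$ (pushforward along a finite morphism is local on the target, and both Henselization and pullback to the Henselization commute with Zariski localization on affine opens). We may therefore assume $P = \spec(B)$, and then finiteness of $\pi$ gives $P' = \spec(B')$ for a finite $B$-algebra $B'$ and $\G = \widetilde{M}$ for some $B'$-module $M$. Then $P^h = \sph(B,IB)$ is an affine Henselian scheme, so it suffices to show that the base change map is an isomorphism on global sections.

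By \hfcite{\lemmodpullback}, the global sections of $(\pi_*\G)^h$ over $P^h$ are $M \tens_B B^h_{IB}$, where $M$ is regarded as a $B$-module via $\pi^\#$. On the other hand, since pushforward on global sections is just restriction, the global sections of $\pi^h_*(\G^h)$ are $\Gamma((P')^h, \G^h) = M \tens_{B'} (B')^h_{IB'}$, viewed as a $B^h_{IB}$-module via the canonical map $\pi^{h,\#}\colon B^h_{IB} \to (B')^h_{IB'}$. Unwound in this setting, the base change map of Lemma~\ref{lem:basechangemap} is $m \tens b \mapsto m \tens \pi^{h,\#}(b)$, factoring through $M \tens_{B'}(B' \tens_B B^h_{IB})$.

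The crucial input is the identification
$$B' \tens_B B^h_{IB} \;\simeq\; (B')^h_{IB'}$$
as $B'$-algebras. Indeed, $B' \tens_B B^h_{IB}$ is finite over the Henselian pair $(B^h_{IB}, IB^h_{IB})$, so by the stability of Henselian pairs under finite extensions (see, e.g., \cite[Chapter XI]{raynaudhens} or \cite[Section 18.5]{ega44}) the pair $(B' \tens_B B^h_{IB}, IB' \tens_B B^h_{IB})$ is itself Henselian and satisfies the universal property of the Henselization of $(B', IB')$. Tensoring with $M$ over $B'$ then yields $M \tens_{B'} (B')^h_{IB'} \simeq M \tens_B B^h_{IB}$, matching the base change map and giving the first claim; the \hetale statement then follows immediately via Remark~\ref{rmk:morhet}. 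The main technical point is cleanly tracking the identification of finite base change with Henselization of pairs through the quasi-coherent sheaf formalism, but this is largely bookkeeping once the displayed identification of rings is in hand.
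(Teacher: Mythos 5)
Your proof is correct and follows essentially the same route as the paper: both arguments reduce to affine opens of $P$, identify sections of the two sheaves via $M \tens_R R^h$ and $M \tens_B B^h$ (using the module description of pullback to the Henselization), and rest on the identification $B' \tens_B B^h \simeq (B')^h$ for a finite map. The only difference is that the paper simply cites this identification (\spcite{0DYE}{Lemma}), whereas you re-derive it from stability of Henselian pairs under integral extensions and the universal property of Henselization; also note the small notational slip that $P^h$ is $\sph(B^h_{IB})$, not $\sph(B,IB)$, since $(B,IB)$ need not be Henselian for a finite-type $A$-algebra $B$.
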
 
\begin{proof} We will show that the base change map is an isomorphism on sections over a basis of affine opens.

Since $\pi$ is finite, it is affine; therefore for each affine open $V \subset P$ with $V=\spec(R)$, the inverse image $\pi^{-1}(V)=U\subset P'$ is also affine, with $U=\spec(B)$ for a finite-type $A$-algebra $B$. Furthermore, the map $R \to B$ given by $\pi$ is a module-finite map between the finite-type $A$-algebras $R,B$.

Similarly, the preimage $V^h$ of $V$ in $P^h$ and $U^h$ of $U$ in $(P')^h$ are isomorphic to the Henselian affine schemes $\sph(R^h) \subset P^h, \sph(B^h) \subset (P')^h$ respectively (where the Henselizations are taken with respect to the ideals $IR,IB$).

We have $\G(U)=M$ for some $B$-module $M$ (since $\G$ is quasi-coherent), and $\pi_*\G|_V$ is the sheaf associated to $M$ viewed as an $R$-module via the map $R \to B$. 

We can then easily check that the base change map $(\pi_*\G)^h \to \pi^h_*(\G^h)$ of Lemma~\ref{lem:basechangemap} on $V^h$ is given by the map of $R^h$-modules $$((\pi_*\G)^h)(V^h)=M \tens_R R^h=(M \tens_B B) \tens_R R^h \to M \tens_B B^h=\G^h(U^h)=\pi^h_*(\G^h)(V^h)$$ which arises from the map $R^h \tens_R B \to B^h$; this map is an isomorphism since $R \to B$ is finite \spcite{0DYE}{Lemma}, so $((\pi_*\G)^h)(V^h) \simto \pi^h_*(\G^h)(V^h)$ via the base change map.

Therefore the base change map is an isomorphism on sections over a basis of affine opens. Hence $(\pi_*\G)^h \simto \pi^h_*(\G^h)$ as we desired to show. \end{proof}

\begin{lemma}\label{lem:finpushhens} In the situation of Lemma~\ref{lem:picomm}, for any sheaf of abelian groups $\F$ on $((P')^h)_{\het}$, the morphism $\H^i_{\het}(P^h,\pi^h_*(\F)) \to \H^i_{\het}((P')^h,\F)$ is an isomorphism. 
\end{lemma}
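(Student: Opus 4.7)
The plan is to reduce to the analogous statement for étale sites of the underlying schemes, where the exactness of finite pushforward on abelian sheaves is classical, and then invoke the Leray spectral sequence.

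First I would identify the underlying schemes. Let $P_0, P'_0$ denote the underlying schemes (in the sense of Definition~\ref{def:getususch}) of $P^h$ and $(P')^h$ respectively; these are the closed subschemes cut out by $I\O_P$ and $I\O_{P'}$, and $\pi$ restricts to a finite morphism $\pi_0: P'_0 \to P_0$ (finiteness being preserved by base change). By Proposition~\ref{prop:globcatequiv}, taking underlying schemes gives equivalences $(P^h)_{\het} \simeq (P_0)_{\et}$ and $((P')^h)_{\het} \simeq (P'_0)_{\et}$. I would check that under these equivalences the pushforward $\pi^h_{\het,*}$ on h-\'etale abelian sheaves corresponds to $(\pi_0)_{\et,*}$: this is a naturality check on the equivalence of sites, and reduces to the fact that an h-\'etale cover of an affine open in $P^h$ pulls back along $\pi^h$ in the same way the corresponding \'etale cover of the underlying scheme pulls back along $\pi_0$.

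Next I would apply the fact that for a finite morphism of schemes $\pi_0$, the higher direct image functors $R^q(\pi_0)_{\et,*}$ vanish on all abelian sheaves for $q > 0$ (the stalks of $R^q (\pi_0)_{\et,*}\F$ at a geometric point $\bar{x}$ of $P_0$ compute the \'etale cohomology of the fiber $\pi_0^{-1}(\spec \O_{P_0,\bar{x}}^{\rm sh})$, which is a finite product of strictly Henselian local rings and hence has no higher \'etale cohomology). Transporting this vanishing back across the equivalence yields $R^q \pi^h_{\het,*}\F = 0$ for $q > 0$.

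Finally, the Leray spectral sequence
\[
E_2^{p,q} = \H^p_{\het}(P^h,\, R^q\pi^h_{\het,*}\F) \;\Longrightarrow\; \H^{p+q}_{\het}((P')^h,\F)
\]
collapses to the edge isomorphism $\H^i_{\het}(P^h,\pi^h_{\het,*}\F) \simto \H^i_{\het}((P')^h,\F)$, which is the map in the statement. The main point of care is the compatibility of the site equivalence with pushforward along $\pi^h$; once that is nailed down, the vanishing of higher direct images and the spectral sequence argument are standard. Note that no quasi-coherence hypothesis on $\F$ is needed, which is precisely why working in the h-\'etale (rather than Zariski) topology is essential here, as flagged in Remark~\ref{rmk:whyhetale}.
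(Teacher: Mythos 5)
Your proposal is correct and follows essentially the same route as the paper: pass to the underlying schemes via the site equivalence of Proposition~\ref{prop:globcatequiv}, use exactness of finite pushforward for \'etale abelian sheaves (the paper cites \spcite{04C2}{Lemma} where you sketch the standard stalk argument), and conclude via the vanishing of higher direct images and the Leray spectral sequence. The compatibility of the equivalence with pushforward that you flag is exactly the step the paper handles implicitly, so there is no substantive difference.
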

\begin{proof} Let $P_0,P_0'$ be the closed subschemes of $P,P'$ respectively defined by $I$. Then by Proposition~\ref{prop:globcatequiv}, we have isomorphisms of sites $(P_0)_{\et} \to (P^h)_{\het}, (P_0')_{\et} \to ((P')^h)_{\het}$. 

Let $\ol{\pi}: P_0' \to P_0$ be the base change of $\pi$. Then by \spcite{04C2}{Lemma}, the functor $\ol{\pi}_{\et,*}$ from the category of abelian sheaves on $(P_0')_{\et}$ to the category of abelian sheaves on $(P_0)_{\et}$ is exact. Hence for $j>0$ we have $R^j\ol{\pi}_{\et,*}\G=0$ for all abelian sheaves $\G$ on $(P_0')_{\et}$. 

The isomorphisms of sites above tell us that similarly, for $j > 0$ we have $R^j\pi^h_{\het,*}(\F)=0$ for all abelian sheaves $\F$ on $((P')^h)_{\het}$. This gives us the desired isomorphism of cohomology.\end{proof}

\begin{lemma}\label{lem:pipush} In the situation of Lemma~\ref{lem:picomm}, a quasi-coherent sheaf $\G$ on $P'$ satisfies \shfcm{P'} if and only if $\pi_*\G$ satisfies \shfcm{P}.
\end{lemma}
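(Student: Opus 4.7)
The plan is to show the two comparison maps, for $\G$ on $P'$ and for $\pi_*\G$ on $P$, fit into a commutative square whose horizontal arrows are isomorphisms; then one is an isomorphism in every degree if and only if the other is.

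First I would handle the horizontal arrows. On the Henselian side, Lemma~\ref{lem:finpushhens} applied to $\F = (\G_\et)^h$ already gives a natural isomorphism $\H^j_\het(P^h, \pi^h_{\het,*}(\G_\et)^h) \simto \H^j_\het((P')^h, (\G_\et)^h)$ for every $j$. On the scheme side, the analogous fact is the standard vanishing of higher \'etale direct images along finite morphisms: since $\pi$ is finite, the functor $\pi_{\et,*}$ is exact on abelian sheaves (its stalks are finite products over the geometric fibers), and consequently the Leray spectral sequence for $\pi_{\et}$ collapses to give $\H^j_\et(P, \pi_{\et,*}\G_\et) \simto \H^j_\et(P', \G_\et)$ for every $j$.

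Second, by Lemma~\ref{lem:picomm} we have a canonical identification $(\pi_*\G)^h \simeq \pi^h_*(\G^h)$, and passing to the small \hetale site this becomes $((\pi_*\G)_\et)^h \simeq \pi^h_{\het,*}((\G_\et)^h)$ via Remark~\ref{rmk:morhet}. Substituting this into the Henselian cohomology group, the comparison map for $\pi_*\G$ takes the form
\[
\H^j_\et(P,\pi_{\et,*}\G_\et) \longrightarrow \H^j_\het\bigl(P^h,\pi^h_{\het,*}((\G_\et)^h)\bigr).
\]
I would then assemble the square
\begin{center}
\begin{tikzcd}
\H^j_\et(P',\G_\et) \arrow[r] & \H^j_\het((P')^h,(\G_\et)^h)\\
\H^j_\et(P,\pi_{\et,*}\G_\et) \arrow[r] \arrow[u,"\sim"] & \H^j_\het(P^h,\pi^h_{\het,*}((\G_\et)^h)) \arrow[u,"\sim"']
\end{tikzcd}
\end{center}
where the top row is the comparison map for $\G$, the bottom row is (after Lemma~\ref{lem:picomm}) the comparison map for $\pi_*\G$, and the vertical arrows are the Leray isomorphisms from the first paragraph. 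Commutativity follows from the naturality of the comparison map of Definition~\ref{def:hetcmpsnmap} in the sheaf together with functoriality of the Leray spectral sequence, both of which reduce to a diagram chase using the flat base change square of Lemma~\ref{lem:basechangemap} (and collapse because higher direct images vanish in both topologies).

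Once this square is in place, the conclusion is immediate: the bottom row is an isomorphism for every $j$ iff the top row is, i.e.\ $\pi_*\G$ satisfies \shfcm{P} iff $\G$ satisfies \shfcm{P'}. I expect the only nontrivial point to be checking commutativity of the above square, which amounts to verifying that the isomorphism of Lemma~\ref{lem:picomm} intertwines the two Leray identifications with the $\delta$-functorial comparison map; this is formal but requires care in identifying the various pullback, pushforward, and site-change functors as in Remark~\ref{rmk:morhet}.
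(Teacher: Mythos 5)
Your proposal is correct and matches the paper's argument: the paper proves the same commutative diagram, using Lemma~\ref{lem:picomm} to identify $(\pi_*\G)^h$ with $\pi^h_*(\G^h)$, Lemma~\ref{lem:finpushhens} for the Henselian-side isomorphism, and exactness of $\pi_{\et,*}$ (finiteness of $\pi$) for the scheme-side isomorphism, then reads off the equivalence. The only cosmetic difference is that the paper displays the Henselian column in two steps through the intermediate group $\H^i_{\het}(P^h,\pi^h_*\G^h)$, which is exactly your substitution via Remark~\ref{rmk:morhet}.
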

\begin{proof}
Fix some $i \ge 0$ and a quasi-coherent sheaf $\G$ on $P'$. Then we have a commutative diagram

\begin{centering}

\begin{tikzcd}[
  row sep={3em,between origins},
]
& \H^i_{\het}((P')^h,\G^h)  & \H^i_{\et}(P',\G) \arrow[l] \\
\H^i_{\het}(P^h,\pi^h_*\G^h)\arrow[Isom,ur,end anchor={[xshift=-2ex,yshift=2ex]}]& &\\
&\H^i_{\het}(P^h,(\pi_*\G)^h) \arrow[Isom,ul,start anchor={[xshift=-2ex,yshift=-2ex]}] & \H^i_{\et}(P,\pi_*\G) \arrow[l]\arrow[Isom,uu]\\
\end{tikzcd}

\vspace{-.3in}
\end{centering}

where the lower diagonal arrow is an isomorphism by Lemma~\ref{lem:picomm}. The right vertical arrow is an isomorphism because $\pi$ is finite (ensuring that $R^j\pi_{\et,*}$ vanishes on all abelian sheaves for $j > 0$). The upper diagonal arrow is an isomorphism by Lemma~\ref{lem:finpushhens}.

It is then clear that the map $\H^i_{\et}(P',\G) \to \H^i_{\het}((P')^h,\G^h)$ is an isomorphism if and only if the map $\H^i_{\et}(P,\pi_*\G) \to \H^i_{\het}(P^h,(\pi_*\G)^h)$ is an isomorphism, as we desired to show.\end{proof}

We can now prove Proposition~\ref{prop:finflatprojred}. This is a standard argument using a resolution of a coherent sheaf on $P$ by pushforwards of coherent sheaves on $P'$ and applying the hypercohomology of the resulting complex to prove our cohomology comparison. 

\begin{proof}[Proof of Proposition~\ref{prop:finflatprojred}]
Assume we have $(A,I)$ a Noetherian Henselian pair and $\pi: P' \to P$ a finite flat surjection of projective $A$-schemes. Fix a coherent sheaf $\F$ on $P$; we will show $\F$ satisfies \shfcm{P}. 

Since $\pi$ is flat and surjective---hence faithfully flat---the map $\F \to \pi_*\pi^*\F$ is injective for any coherent sheaf $\F$ on $P$. 
 
 Setting $\G_0:=\pi^*\F$, which is a coherent sheaf on $P'$, we can replace $\F$ with $(\pi_*\G_0)/\F$ to obtain an injective map $(\pi_*\G_0)/\F \to \pi_* \G_1$ for some coherent sheaf $\G_1$ on $P'$. We can then iteratively construct a resolution
$0 \to \F \to \pi_*\G_0 \to \pi_*\G_1 \to \dots$ where the $\G_i$ are coherent sheaves on $P'$.

In other words, for any coherent sheaf $\F$ on $P$, there exists a resolution of $\F$ by pushforwards of coherent sheaves on $P'$. These sheaves $\pi_*\G_i$ satisfy \shfcm{P} by Lemma~\ref{lem:pipush}. 
 
 Now we consider two complexes of coherent $\O_P$-modules: $\F[0]^\bullet$, the complex with the only nonzero term being $\F$ in degree $0$, and the complex $\G^\bullet$ with degree $i$ term $\G^i=\pi_*\G_i$. The injection $\F \hookrightarrow \pi_*\G_0$ in degree $0$ and zero maps $0 \to \pi_*\G_i$ in degree $i$ for $i>0$ give a quasi-isomorphism of complexes $\F[0]^\bullet \to \G^\bullet$. The corresponding complexes of $\O_{P_{\et}}$-modules $(\F[0]^\bullet)_{\et}$ and $(\G^\bullet)_{\et}$ are also quasi-isomorphic, so we may use the hypercohomology of $(\G^\bullet)_{\et}$ to compute the \Et cohomology of $\F$. By a similar argument, for $\mc{H}^\bullet$ the complex with degree $i$ term $\mc{H}^i=(\pi_*\G_i)^h$, the hypercohomology of $(\mc{H}^\bullet)_\het$ may be used to compute the \hetale cohomology of $\F^h$. 
 
The remainder of the proof is a standard application of the hypercohomology spectral sequence. \end{proof}

\subsection{Relative $\P^1$}\label{sec:relp1}

In the previous section, we reduced \cmpsn for $\P^d$ over a Henselian pair to \cmpsn for $(\P^1)^{\times d}$ over a Henselian pair. In this section we reduce to the case of $\P_A^1$ for $(A,I)$ a Henselian pair. More specifically, we will show:

\begin{proposition}\label{prop:relp1} Let $(A,I)$ be a Noetherian Henselian pair. Assume that for any $A$-algebra $B$ of essentially finite-type with $I$-adic Henselization $B^h$, the $B^h$-scheme $\P_{B^h}^1$ satisfies \cmpsn.  Then if a projective $A$-scheme $X$ satisfies \cmpsn, so does $Y:=\P_X^1$. 
\end{proposition}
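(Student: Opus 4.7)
The plan is to use the canonical projection $\pi \colon Y = \P_X^1 \to X$ together with the Leray spectral sequences for $\pi$ on the \Et site of $X$ and for $\pi^h \colon Y^h \to X^h$ on the \hetale site of $X^h$, and to compare their $E_2$-pages using (i) the assumption that $X$ satisfies \cmpsn and (ii) a base-change isomorphism $(R^j\pi_*\F)^h \simto R^j\pi^h_*(\F^h)$ for each coherent $\F$ on $Y$. The second ingredient is exactly what Proposition~\ref{prop:genhighdirim} is designed to produce, provided we can verify its ``relative comparison'' hypothesis for $\pi$.

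The first step is therefore to check that $\pi \colon \P_X^1 \to X$ satisfies the relative-comparison hypothesis of Proposition~\ref{prop:genhighdirim}. Since $X$ is projective over the Noetherian base $A$, at any (geometric) point of $X$ the local model of $\pi$ is $\P^1$ over a localization $B$ of the coordinate ring of an affine open of $X$; such a $B$ is essentially of finite type over $A$, so by the standing hypothesis of the proposition the Henselian scheme $\P_{B^h}^1$ satisfies \cmpsn. Modulo checking that the data supplied by the $\P_{B^h}^1$ hypothesis matches what Proposition~\ref{prop:genhighdirim} consumes---and that Henselization interacts suitably with the relevant localizations of coordinate rings---this yields the desired $\O_{X^h_\het}$-linear base change isomorphism $(R^j\pi_{\et,*}\F_\et)^h \simto R^j\pi^h_{\het,*}(\F_\et)^h$ for every coherent $\F$ on $Y$.

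For the second step, since $\pi$ is projective and $\F$ is coherent, each $R^j\pi_*\F$ is coherent on the Noetherian scheme $X$ (and vanishes for $j\ge 2$). The hypothesis on $X$ then gives \hetale comparison isomorphisms $\H^i_\et(X,R^j\pi_*\F) \simto \H^i_\het(X^h,(R^j\pi_*\F)^h)$ for all $i,j \ge 0$, and composing with the base-change isomorphism from Step 1 identifies these with $\H^i_\het(X^h,R^j\pi^h_*(\F^h))$. Feeding this into the natural morphism between the compatible Leray spectral sequences
\[
\H^i_\et(X,R^j\pi_*\F) \Rightarrow \H^{i+j}_\et(Y,\F), \qquad \H^i_\het(X^h,R^j\pi^h_*(\F^h)) \Rightarrow \H^{i+j}_\het(Y^h,\F^h)
\]
gives an isomorphism on $E_2$-pages and hence on abutments, which is exactly \shfcm{Y} for $\F$; since $\F$ was arbitrary coherent, $Y$ satisfies \cmpsn.

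I expect the main obstacle to be the first step, i.e.\ verifying the relative-comparison hypothesis of Proposition~\ref{prop:genhighdirim} for $\pi \colon \P_X^1 \to X$. The subtlety is matching the ``local'' hypothesis on $\P_{B^h}^1$ for $B$ essentially of finite type over $A$ with whatever stalk- or localization-level input Proposition~\ref{prop:genhighdirim} requires, and controlling how the $I$-adic Henselization of $X$ interacts with Henselizations of localizations of coordinate rings (together with preservation of coherence under these operations). Once that compatibility is cleanly set up, the Leray spectral sequence argument is routine naturality on the \Et and \hetale sites, using Remark~\ref{rmk:morhet} to keep the comparison maps and pullbacks straight.
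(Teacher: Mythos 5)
Your proposal is correct and follows essentially the same route as the paper: the base-change isomorphism is exactly Proposition~\ref{prop:genhighdirim} applied with $S=\P_A^1$ (recorded there as Corollary~\ref{cor:henshighdirim}), and the conclusion is then drawn from a morphism of Leray spectral sequences built from injective resolutions, using coherence of $R^j\pi_*\F$ and \cmpsn for $X$ on the second sheet. The obstacle you anticipate in Step 1 does not in fact arise, since the hypothesis of Proposition~\ref{prop:genhighdirim}, specialized to $S=\P_A^1$ (where $S_{B^h}=\P_{B^h}^1$ for the affine \Et neighborhoods $U=\spec(B)$ appearing in its stalkwise proof), is verbatim the standing hypothesis of the proposition being proved.
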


\begin{remark}\label{rmk:relp1hypo} We assume $X$ projective in Proposition~\ref{prop:relp1} since it will be applied when $X$ is a power of the projective line. However the proof of Proposition~\ref{prop:relp1} only uses the fact that $X$ is finite-type over a Noetherian ring (hence locally Noetherian). In Theorem~\ref{thm:nonnoethrelcmpsn} we generalize Proposition~\ref{prop:relp1} to the case where $Y$ is proper over $X$ and $X$ is finitely presented over the base Henselian pair. 
\end{remark}

We will first show that higher direct images along a relative $\P^1$ map are compatible with pullback to the Henselization. In fact we can show this is true more generally for maps satisfying a ``relative comparison''. 

\begin{proposition}\label{prop:genhighdirim} Let $(A,I)$ be a Henselian pair with $A$ Noetherian, and $S$ a proper $A$-scheme. Assume that for any  $A$-algebra $B$ of essentially finite-type with $I$-adic Henselization $B^h$, the proper $B^h$-scheme $S_{B^h}=S \times_{\spec(A)} \spec(B^h)$ satisfies \cmpsn. 

For $X$ a finite-type $A$-scheme, let $Y=X \times_{\spec(A)} S$ and let $f:Y \to X$ be the natural proper map. For $\F$ a coherent sheaf on $Y$ and any $j \ge 0$, the map $(R^jf_{\et,*}\F_{\et})^h \to R^jf^h_{\het,*}(\F^h)$ of sheaves on $(X^h)_{\het}$ arising from the base change map of Lemma~\ref{lem:basechangemap} is an isomorphism.
\end{proposition}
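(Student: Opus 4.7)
The plan is to reduce, via the equivalence of sites in Proposition~\ref{prop:globcatequiv}, to an \Et base change isomorphism for the Cartesian square $(Y_0, X_0, Y, X)$ of underlying schemes, and then to verify this base change on \Et stalks using the GHGA hypothesis together with a filtered colimit over essentially finite type \Et neighborhoods.

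First I would apply Proposition~\ref{prop:globcatequiv} (together with Remarks~\ref{rmk:morhet} and~\ref{rmk:stalkhet}) to translate the map: under the equivalence of sites $(X^h)_\het \xrightarrow{\sim} (X_0)_\et$ (and its analogue for $Y$), the Henselization functor $(\cdot)^h$ on sheaves becomes pullback $i_X^*$ along the closed immersion $i_X : X_0 \hookrightarrow X$, and $R^j f^h_{\het,*}$ becomes $R^j (f_0)_{\et,*}$ for $f_0 : Y_0 \to X_0$ the base change of $f$. So the map in question translates to the \Et base change map
\[ i_X^* \bigl(R^j f_{\et,*} \F_\et \bigr) \longrightarrow R^j (f_0)_{\et,*} \bigl(i_Y^* \F_\et\bigr) \]
on $(X_0)_\et$, for $i_Y : Y_0 \hookrightarrow Y$.

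Next I would check this map is an isomorphism on \Et stalks. For a geometric point $\bar{x}$ of $X_0$, let $R = \O^{sh}_{X, \bar{x}}$ and $R_0 = R/IR$, so that $(R, IR)$ is a Henselian pair (since $R$ is a local Henselian ring with $IR \subseteq \mf{m}_R$) and $R_0 = \O^{sh}_{X_0, \bar{x}}$. Since $\F$ is coherent, $f$ is proper, and everything is Noetherian, flat base change and the commutation of coherent cohomology under proper morphisms with filtered colimits of rings (together with the agreement of \Et and Zariski cohomology for quasi-coherent sheaves on Noetherian schemes) identifies the two stalks as $H^j(S_R, \F|_{S_R})$ and $H^j(S_{R_0}, \F|_{S_{R_0}})$ respectively, where $S_R := S \times_{\spec A} \spec R$. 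So we are reduced to showing the natural map between these cohomologies is an isomorphism.

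Finally I would invoke the hypothesis. Write $R$ as the filtered colimit $R = \colim_\lambda C_\lambda$ of sections of affine \Et neighborhoods $\spec C_\lambda \to X$ of $\bar{x}$; each $C_\lambda$ is essentially of finite type over $A$. By the universal property of Henselization, together with the fact that $R$ is itself Henselian along $IR$, this presentation also realizes $R = \colim_\lambda C_\lambda^h$ as a colimit of $I$-adic Henselizations. Applying the GHGA hypothesis to each $C_\lambda$ and combining with the equivalence of sites $((S_{C_\lambda^h})^h)_\het \simeq (S_{C_\lambda/IC_\lambda})_\et$ (and again Zariski = \Et for quasi-coherent sheaves) yields
\[ H^j\bigl(S_{C_\lambda^h},\, \F|_{S_{C_\lambda^h}}\bigr) \;\simeq\; H^j\bigl(S_{C_\lambda/IC_\lambda},\, \F|_{S_{C_\lambda/IC_\lambda}}\bigr). \]
Passing to the filtered colimit over $\lambda$ then recovers the desired isomorphism $H^j(S_R, \F|_{S_R}) \simeq H^j(S_{R_0}, \F|_{S_{R_0}})$. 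The main obstacle is the colimit bookkeeping---identifying $\colim_\lambda C_\lambda^h$ with $R$ and verifying that the composite of natural isomorphisms in the chain really is the base change map, not merely an abstract isomorphism---but this should go through by the naturality of flat base change, the site equivalence, and the GHGA comparison map in the hypothesis.
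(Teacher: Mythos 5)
There is a genuine gap, and it lies in how you translate the \hetale pushforward across the site equivalence; the rest of your strategy (stalkwise check, writing $R=\O_{X,\ol{x}}^{\rm sh}$ as $\colim C_\lambda=\colim C_\lambda^h$ over \Et neighborhoods, invoking the GHGA hypothesis for essentially finite-type algebras) is exactly the paper's. Under the equivalence $(Y^h)_\het\simeq (Y_0)_\et$, the sheaf $(\F^h)_\het$ corresponds to the topological inverse image $i^{-1}_{\et}\F_\et$ along $Y_0\hookrightarrow Y$ (Remark~\ref{rmk:stalkhet}), \emph{not} to the \Et sheaf of the coherent restriction $\F|_{Y_0}$. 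Your stalk computation silently replaces the former by the latter: ``flat base change and commutation of coherent cohomology with filtered colimits'' applies to quasi-coherent sheaves on the special fibre, whereas the sections of $i^{-1}_\et\F_\et$ over an \Et open of $Y_0$ are sections of $\F^h$ over the corresponding Henselization, not of $\F\otimes_{\O_Y}\O_{Y_0}$. Consequently the stalk of $R^jf^h_{\het,*}(\F^h)$ at $\ol{x}$ is not $\H^j(S_{R_0},\F|_{S_{R_0}})$. Already for $S=\spec(A)$ (so $Y=X$, $f=\mathrm{id}$, $j=0$) the map in the proposition is the identity on $\F^h$ and both stalks equal $\F_{\ol{x}}$, while your identification would require $\F_{\ol{x}}\simeq \F_{\ol{x}}\otimes_R R_0=\F_{\ol{x}}/I\F_{\ol{x}}$, which is false. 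For the same reason your displayed isomorphism $\H^j(S_{C_\lambda^h},\F|_{S_{C_\lambda^h}})\simeq \H^j(S_{C_\lambda/IC_\lambda},\F|_{S_{C_\lambda/IC_\lambda}})$ is not what the hypothesis gives: GHGA comparison identifies $\H^j_\et(S_{C_\lambda^h},\F)$ with $\H^j_\het((S_{C_\lambda^h})^h,\F^h)$, and the latter is not the coherent cohomology of the special fibre $S_{C_\lambda/IC_\lambda}$; indeed the target statement $\H^j(S_R,\F|_{S_R})\simeq\H^j(S_{R_0},\F|_{S_{R_0}})$ you aim for at the end is false in general.

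The repair is the paper's bookkeeping: keep the second stalk in \hetale terms, namely $(R^jf^h_{\het,*}(\F^h))_{\ol{x}^h}=\colim_{(U,\ol{u})}\H^j_\het(U^h\times_{X^h}Y^h,\F^h)$ over affine \Et neighborhoods $U=\spec(B)$ of $\ol{x}$, note $U^h\times_{X^h}Y^h\simeq (S_{B^h})^h$, apply the hypothesis to each $B$ to replace each term by $\H^j_\et(S_{B^h},\F_{B^h})$, and then use $\colim B^h=\colim B=R$ and passage to the limit to identify the colimit with $\H^j_\et(S_R,\F|_{S_R})$ --- that is, with the \emph{same} group as the stalk of $(R^jf_{\et,*}\F_\et)^h$, compatibly with the natural maps. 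So the correct target of your reduction is ``both stalks equal $\H^j(S_R,\F|_{S_R})$,'' not a comparison between $S_R$ and $S_{R_0}$.
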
 

\begin{proof} We will show the base change map is an isomorphism by checking on stalks, using the results of \cite[Section 5.9]{leifu} on passage to limits for \Et cohomology. 

Since $f$ is proper and $X$ is locally Noetherian, the higher direct image sheaves $R^jf_{\et,*}\F_\et$ are coherent on $X$.
Therefore the pullback $(R^jf_{\et,*}\F_{\et})^h$ to $(X^h)_\het$ is a coherent sheaf of $\O_{(X^h)_{\het}}$-modules. 

We fix a point $x \in X^h$, and also write $x$ for its image in $X$. Choose a geometric point $\ol{x}$ of $X$ lying over $x$, and write $\ol{x}^h$ for the corresponding geometric point of $X^h$ lying over $x$. 
By Lemma~\ref{lem:stalkhet}, the stalks of $\O_{(X^h)_{\het}}$ at $\ol{x}^h$ and of $\O_{X_{\et}}$ at $\ol{x}$ are both naturally isomorphic to the strict Henselization $(\O_{X,x})^{\rm sh}$ of the local ring $\O_{X,x}$ along its maximal ideal. 

By the construction of higher direct images as a sheafification, we can compute the stalks of both sheaves at $\ol{x}^h$ as:\vspace{-.25in}

\begin{align}
\label{eqn:stalk1}\tag{*}
((R^jf_{\et,*}\F_{\et})^h)_{\ol{x}^h} &=  \colim_{(U,\ol{u})} \H^j_{\et}(U \times_X Y,\F),\\
\label{eqn:stalk2}\tag{**}
(R^jf^h_{\het,*}(\F^h)_{\het})_{\ol{x}^h} &= \colim_{(U,\ol{u})} \H^j_{\het}(U^h \times_{X^h} Y^h,\F^h)
\end{align} 

with $(U,\ol{u}) \to (X,\ol{x})$ the affine \Et neighborhoods. This holds because the system of \hetale neighborhoods $(U^h,\ol{u}^h: U^h \to X^h)$ of $\ol{x}$ (Henselizations of affine \Et neighborhoods of $\ol{x}$ as a geometric point of $X$) is cofinal, since by the isomorphism of sites $(X_0)_{\et} \to (X^h)_{\het}$ for $X_0$ the closed subscheme of $X$ defined by the ideal $I$ (Proposition~\ref{prop:globcatequiv}), we can check cofinality modulo $I$. The limit of the affine \Et neighborhoods $(U,\ol{u})$ of $(X,\ol{x})$ is the map $\spec((\O_{X,x})^{\rm sh}) \to X$, which we denote $X_{(\ol{x})} \to X$. 

 Our map of sheaves $(R^jf_{\et,*}\F_{\et})^h \to R^jf^h_{\het,*}(\F_{\et}^h)$ is given on each stalk as the limit of the \hetale comparison maps $\H^j_{\et}(U \times_X Y,\F) \to \H^j_{\het}((U \times_X Y)^h,\F^h)=\H^j_{\het}(U^h \times_{X^h} Y^h,\F^h)$ for each \Et neighborhood $U$ of $\ol{x}$ as described above. 
 
We begin by computing the first stalk $((R^jf_{\et,*}\F_{\et})^h)_{\ol{x}^h}$. Again by the construction of higher direct images as a sheafification, we note that that the colimit in (\ref{eqn:stalk1}) is also equal to $(R^jf_{\et,*}\F_\et)_{\ol{x}}$, which is isomorphic to $\H^j_{\et}(Y \times X_{(\ol{x})},\F|_{Y \times X_{(\ol{x})}})$ \cite[Corollary 5.9.5]{leifu}. 

%
%

We now consider the other stalk, $(R^jf^h_{\het,*}(\F_{\et}^h))_{\ol{x}^h}$. For each affine \Et neighborhood $(U,\ol{u})$ of $(X,\ol{x})$, its Henselization is the Henselian affine \hetale neighborhood $(U^h,\ol{u}^h: U^h \to X^h)$ of $(X^h,\ol{x}^h)$. If $U=\spec(B)$, then $U^h$ is isomorphic to $\sph(B^h)$ for $B^h$ the $I$-adic Henselization of $B$.  The limit $\colim B^h$ is equal to $\colim B$, which is $(\O_{X,x})^{\rm sh}$ (see Lemma~\ref{lem:stalkhet}), so the limit of the Henselian affine \hetale neighborhoods $(U^h,\ol{u}^h: U^h \to X^h)$ is $X_{(\ol{x})} \to X$. By the definition of $f$, we see that $(U \times_X Y)_{B^h} \simeq S_{B^h}$ as $B^h$-schemes. Furthermore, we see that $U^h \times_{X^h} Y^h \simeq (S_{B^h})^h$, the $I$-adic Henselization of the scheme $S_{B^h}$. The $A$-algebra $B$ is finite-type since $X$ is finite-type over $A$. Therefore by our assumption, $S_{B^h}$ \emph{satisfies \cmpsn}, so by Corollary~\ref{cor:hetzarcmpsn} the comparison map $\H^j_{\et}((U \times_X Y)_{B^h},\F_{B^h}) \to \H^j_{\het}(U^h \times_{X^h} Y^h,\F^h)$ is an isomorphism.

Since $\colim B^h = \colim B = (\O_{X,x})^{\rm sh}$, we note that both the limit of the maps $U \times_X Y \to Y$ and the limit of the maps $(U \times_X Y)_{B^h} \to Y$ are equal to the map $Y \times X_{(\ol{x})} \to Y$. Then from (\ref{eqn:stalk2}), we can compute \vspace{-.25in}

\begin{align*}
(R^jf^h_{\het,*}(\F^h)_{\het})_{\ol{x}^h} &= \colim_{(U,\ol{u})} \H^j_{\het}(U^h \times_{X^h} Y^h,\F^h)\\
&=\colim_{(U=\spec(B),\ol{u})}\H^j_{\et}((U \times_X Y)_{B^h},\F_{B^h}) \\
&= \H^j_{\et}(Y \times X_{(\ol{x})},\F|_{Y \times X_{(\ol{x})}}),
\end{align*}

with the final equality as a consequence of \cite[Corollary 5.9.4]{leifu}. 

This identification arises from the natural maps, so the maps on stalks $((R^jf_{\et,*}\F_{\et})^h)_{\ol{x}^h} \to (R^jf^h_{\het,*}(\F_{\et}^h))_{\ol{x}^h}$ are isomorphisms; hence the base change map is an isomorphism as we desired to show.\end{proof}

\begin{corollary}\label{cor:henshighdirim} In the situation of Proposition~\ref{prop:relp1}, let $f$ be the canonical map $Y=\P_X^1 \to X$, with $f^h: Y^h \to X^h$ the map induced by $f$ on the $I$-adic Henselizations. For $\F$ a coherent sheaf on $Y$ and any $j \ge 0$, the map $(R^jf_{\et,*}\F_{\et})^h \to R^jf^h_{\het,*}(\F^h)$ of sheaves on $(X^h)_{\het}$ arising from the base change map of Lemma~\ref{lem:basechangemap} is an isomorphism.
\end{corollary}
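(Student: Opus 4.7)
The plan is to apply Proposition~\ref{prop:genhighdirim} verbatim, taking $S = \P^1_A$. There are two hypotheses to verify. The first is that for every $A$-algebra $B$ of essentially finite type, with $I$-adic Henselization $B^h$, the proper $B^h$-scheme $S_{B^h} = \P^1_{B^h}$ satisfies \cmpsn; but this is precisely the standing assumption imposed in Proposition~\ref{prop:relp1}, so it is available for free. The second is that $X$ is a finite-type $A$-scheme, which is automatic since $X$ is assumed projective over $A$.

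Under the canonical identification $X \times_{\spec(A)} \P^1_A \simeq \P^1_X = Y$, the structural projection to $X$ coincides with the map $f: Y \to X$ of the corollary, and the map on $I$-adic Henselizations induced by $f$ in Proposition~\ref{prop:genhighdirim} agrees with $f^h: Y^h \to X^h$. The resulting base change map of Lemma~\ref{lem:basechangemap} is the one appearing in the corollary, and the conclusion of Proposition~\ref{prop:genhighdirim} is exactly the asserted isomorphism $(R^jf_{\et,*}\F_{\et})^h \simto R^jf^h_{\het,*}(\F^h)$ of sheaves on $(X^h)_{\het}$.

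There is no real obstacle here: the corollary is essentially a direct specialization, and the reason Proposition~\ref{prop:genhighdirim} was formulated in the level of generality allowing an arbitrary proper $S$ satisfying a relative comparison hypothesis is precisely so that applications such as this one reduce to a hypothesis check. The only subtlety worth noting in passing is that the ``essentially finite-type'' flexibility in the hypothesis of Proposition~\ref{prop:genhighdirim} is what allows the stalk-level reduction there to pass to strict Henselizations of local rings of $X$; in the present situation this flexibility is inherited from the hypothesis of Proposition~\ref{prop:relp1}, so no further work is needed.
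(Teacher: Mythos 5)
Your proposal is correct and matches the paper's proof, which likewise deduces the corollary as a direct application of Proposition~\ref{prop:genhighdirim} with $S=\P_A^1$ and $X$ projective (hence finite-type), the relative comparison hypothesis being exactly the standing assumption of Proposition~\ref{prop:relp1}. Nothing further is needed.
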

\begin{proof} This is a straightforward application of Proposition~\ref{prop:genhighdirim} with $X$ projective and $S=\P_A^1$. \end{proof}

We can now prove Proposition~\ref{prop:relp1}. 

\begin{proof}[Proof of Proposition~\ref{prop:relp1}] Let $f: Y = \P_X^1 \to X$ be the natural map. Fix a coherent sheaf $\F$ on $Y$. We will compare the Leray spectral sequence  $\H^i_{\et}(X,R^jf_*\F) \implies \H^{i+j}_{\et}(Y,\F)$  to the analogous spectral sequence for $f^h$ and $\F^h$.  

We choose an injective resolution $\mc{I}^\bullet$ of $\F_{\et}$ by $\O_{Y_{\et}}$-modules, with which we can compute the \Et cohomology of $\F$. Similarly, we choose an injective resolution $\mc{J}^\bullet$ of $\F^h_{\het}$ by $\O_{(Y^h)_{\het}}$-modules to compute the \hetale cohomology of $\F^h$. 

We may pull back $\mc{I}^\bullet$ to an exact sequence of $\O_{(Y^h)_{\het}}$-modules (with degree $i$ term $(\mc{I}^i)^h$), which maps to $\mc{J}^\bullet$ since $\mc{J}^\bullet$ is an injective resolution. This gives us a map from the pullback of $\mc{I}^\bullet$ to $(Y^h)_{\het}$ to $\mc{J}^\bullet$, which can be lifted through the steps of constructing the Leray spectral sequences (applying $f_{\et,*}$ or $f^h_{\het,*}$, taking a Cartan-Eilenberg resolution, taking global sections, and finally filtering the total complex by rows) to give us a map of spectral sequences that must respect the filtration on limit terms.

Since $f$ is projective (hence proper) proper and $X$ is locally Noetherian, the higher direct image sheaves $R^jf_{\et,*}\F_\et$ are coherent on $X$.
Therefore the pullback $(R^jf_{\et,*}\F_{\et})^h$ to $(X^h)_\het$ is a coherent sheaf of $\O_{(X^h)_{\het}}$-modules. Then by Proposition~\ref{prop:genhighdirim}, we see that pullback to the Henselization gives an isomorphism on the second sheet, since $$\H^i_{\et}(X,R^jf_*\F) \to \H^i_{\het}(X^h,(R^jf_*\F)^h) \simeq \H^i_{\het}(X^h,R^jf^h_*\F^h)$$ is an isomorphism by \cmpsn for $X$.

Since the map of spectral sequences is an isomorphism at the second sheet and respects the filtration on limit terms, the comparison map $\H^n_{\et}(Y,\F) \to \H^n_{\het}(Y^h,\F^h)$ is an isomorphism for all $n$.\end{proof}

\subsection{Reduction of the general proper case}\label{sec:propred}

We next consider general projective schemes.

\begin{lemma}\label{lem:genprojred} 
Let $(A,I)$ be a Noetherian Henselian pair. Assume that for any  $A$-algebra $B$ of essentially finite-type with $I$-adic Henselization $B^h$, the $B^h$-scheme $\P_{B^h}^1$ satisfies \cmpsn.  Then any projective $A$-scheme $X$ satisfies \cmpsn.\end{lemma}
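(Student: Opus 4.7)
The plan is to chain three reductions: general projective $A$-scheme $X$ $\leadsto$ $\P^d_A$ $\leadsto$ $(\P_A^1)^{\times d}$ $\leadsto$ $\P^1$ over a Henselization, where the last step is just the standing hypothesis.

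First, for a general projective $A$-scheme $X$, pick a closed immersion $i: X \hookrightarrow \P_A^d$ (such an $i$ exists since $X$ is projective over the Noetherian ring $A$). A closed immersion is finite, so the hypotheses of Lemmas~\ref{lem:picomm}, \ref{lem:finpushhens}, \ref{lem:pipush} are satisfied with $P' = X$, $P = \P_A^d$, $\pi = i$: both are finite-type $A$-schemes and $i$ is finite. Applying Lemma~\ref{lem:pipush} to a coherent sheaf $\F$ on $X$, we conclude that $\F$ satisfies \shfcm{X} if and only if $i_*\F$ satisfies \shfcm{\P_A^d}. Since every coherent sheaf on $X$ arises this way (and even without this remark, the equivalence on $\F$ is all we need), it suffices to prove \cmpsn for $\P_A^d$.

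Second, by Proposition~\ref{prop:pifinflat}, the map $\varpi_A: (\P_A^1)^{\times d} \to \P_A^d$ is a finite flat surjection of projective $A$-schemes. Proposition~\ref{prop:finflatprojred} then says that if $(\P_A^1)^{\times d}$ satisfies \cmpsn, so does $\P_A^d$. Hence it suffices to establish \cmpsn for $(\P_A^1)^{\times d}$.

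Third, we argue by induction on $d$ that $(\P_A^1)^{\times d}$ satisfies \cmpsn. The base case $d=1$ is the standing hypothesis with $B = A$ (so $B^h = A$). For the inductive step, use the isomorphism $(\P_A^1)^{\times d} \simeq \P^1_{(\P_A^1)^{\times (d-1)}}$. By the inductive hypothesis, the projective $A$-scheme $X := (\P_A^1)^{\times (d-1)}$ satisfies \cmpsn, and the hypothesis of Proposition~\ref{prop:relp1} is exactly our standing hypothesis on $\P^1_{B^h}$. Applying Proposition~\ref{prop:relp1} with this $X$ yields \cmpsn for $\P_X^1 = (\P_A^1)^{\times d}$, completing the induction.

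This chain of reductions invokes only lemmas and propositions already established, so no genuinely new obstacle arises; the only point to double-check is that a closed immersion $i: X \hookrightarrow \P_A^d$ really does slot into the finite-map lemmas of Section~\ref{sec:prodlines} (it does, since those lemmas only need $\pi$ to be a finite morphism between finite-type $A$-schemes, with no flatness or surjectivity required for the direction of Lemma~\ref{lem:pipush} used here).
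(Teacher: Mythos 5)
Your proof is correct and follows essentially the same route as the paper: iterate Proposition~\ref{prop:relp1} (with the base case $d=1$ coming from the hypothesis with $B=A$, $B^h=A$) to get \cmpsn for $(\P_A^1)^{\times d}$, apply Proposition~\ref{prop:finflatprojred} via $\varpi_A$ to get $\P_A^d$, and then handle a general projective $X$ by pushing forward along a closed immersion using Lemma~\ref{lem:pipush}, which indeed only requires a finite morphism. The only difference is presentational (you run the reductions in the opposite order), so nothing further is needed.
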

\begin{proof} By Proposition~\ref{prop:relp1} iterated and our assumption, we see that for any $d \ge 1$ the projective $A$-scheme $(\P_A^1)^{\times d}$ satisfies \cmpsn. Then using the map $\varpi_A: (\P_A^1)^{\times d} \to \P_A^d$ described in Section~\ref{sec:prodlines}, by Proposition~\ref{prop:finflatprojred}, we see that $\P_A^d$ satisfies \cmpsn for all $d \ge 1$.

Now consider an arbitrary projective $A$-scheme $X$. Then there exists $N$ and a closed immersion $\iota: X \hookrightarrow \P_A^N$. We apply Lemma~\ref{lem:pipush} to $\iota: X \hookrightarrow \P_A^N$ to see that a coherent sheaf $\F$ on $X$ satisfies \shfcm{X} if and only if $\iota_*\F$ satisfies \shfcm{\P_A^N}.

Therefore since $\P_A^N$ satisfies \cmpsn, $X$ does as well.\end{proof}

In order to extend Lemma~\ref{lem:genprojred} to the proper case via Chow's lemma, we will use the following result:

\begin{lemma}\label{lem:henshighdirim}
Let $(A,I)$ be a Noetherian Henselian pair. Assume that for any  $A$-algebra $B$ of essentially finite-type with $I$-adic Henselization $B^h$, the $B^h$-scheme $\P_{B^h}^1$ satisfies \cmpsn.

Let $X$ be a finite-type $A$-scheme and $f: Y \to X$ be a morphism of schemes which is projective locally on $X$. For $\F$ a coherent sheaf on $Y$ and any $j \ge 0$, the map $(R^jf_{\et,*}\F_{\et})^h \to R^jf^h_{\het,*}(\F^h)$ of sheaves on $(X^h)_{\het}$ arising from the base change map of Lemma~\ref{lem:basechangemap} is an isomorphism.\end{lemma}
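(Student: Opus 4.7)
The approach is to mimic the stalk-based argument of Proposition~\ref{prop:genhighdirim} with a modest modification that accommodates the Zariski-local nature of the hypothesis on $f$, and then to appeal to Lemma~\ref{lem:genprojred} at each stalk's contributing piece to supply the required \cmpsn.

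Since being an isomorphism of sheaves on $(X^h)_{\het}$ may be checked on stalks, I would fix a geometric point $\ol{x}^h$ of $X^h$ lying over $\ol{x} \in X$ and follow the stalk calculation of Proposition~\ref{prop:genhighdirim}. By the standard passage-to-limit argument, the stalk $((R^jf_{\et,*}\F_\et)^h)_{\ol{x}^h}$ is identified with $\H^j_\et(Y \times_X X_{(\ol{x})}, \F|_{Y \times_X X_{(\ol{x})}})$, while the other stalk is $\colim_{(U,\ol{u})} \H^j_\het(U^h \times_{X^h} Y^h, \F^h)$. Because $f$ is projective locally on $X$, the affine \'etale neighborhoods $U = \spec(B)$ of $\ol{x}$ whose image in $X$ is contained in a Zariski open over which $f$ is projective are cofinal; for any such $U$, the scheme $Y_U := U \times_X Y$ is projective over $B$, and $U^h \times_{X^h} Y^h = ((Y_U)_{B^h})^h$.

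The crucial step is then to show that the comparison map $\H^j_\et((Y_U)_{B^h}, \F_{B^h}) \to \H^j_\het(((Y_U)_{B^h})^h, \F^h)$ is an isomorphism --- equivalently, that the projective $B^h$-scheme $(Y_U)_{B^h}$ satisfies \cmpsn, with the h-\'etale translation then supplied by Corollary~\ref{cor:hetzarcmpsn}. I would obtain this by applying Lemma~\ref{lem:genprojred} to the Noetherian Henselian pair $(B^h, IB^h)$ (Noetherian because $B$ is essentially finite-type over the Noetherian $A$). The hypothesis of that lemma for this pair --- that $\P^1_{C^h}$ satisfies \cmpsn for every essentially finite-type $B^h$-algebra $C$ --- should be inherited from the standing hypothesis on $(A,I)$ by a standard approximation: any such $C$ is a filtered colimit of essentially finite-type $A$-algebras $C_\alpha$ (since $B^h$ itself is a filtered colimit of \'etale $B$-algebras), Henselization commutes with such colimits, and cohomology comparison on $\P^1$ should pass to the colimit via the limit lemmas of the Appendix.

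Taking the colimit over the cofinal system of $U$'s and identifying both sides with $\H^j_\et(Y \times_X X_{(\ol{x})}, \F|_{Y \times_X X_{(\ol{x})}})$, exactly as in Proposition~\ref{prop:genhighdirim}, will complete the verification on stalks. I expect the inheritance-via-approximation step to be the main technical obstacle: one must confirm that Henselization, formation of coherent cohomology on $\P^1$, and the h-\'etale comparison map all behave well under the filtered colimit, so that an isomorphism at each finite stage $C_\alpha$ yields an isomorphism in the limit. Everything else is a faithful adaptation of Proposition~\ref{prop:genhighdirim}'s proof with the global product structure $X \times_A S$ replaced by a Zariski-local projective structure and Lemma~\ref{lem:genprojred} playing the role of the original standing hypothesis.
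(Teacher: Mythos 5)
Your proposal is correct in substance, and its skeleton is the same as the paper's: everything comes down to knowing \cmpsn for projective schemes over the Henselized local pairs $(B^h,IB^h)$, which Lemma~\ref{lem:genprojred} provides. The packaging differs. The paper's proof is a two-line reduction: the statement is local on $X$, so one takes $X$ affine and (pushing $\F$ forward along a closed immersion $Y\hookrightarrow\P_X^n$, which is harmless by Lemma~\ref{lem:picomm} and Lemma~\ref{lem:finpushhens}) assumes $Y=\P_X^n$; then Proposition~\ref{prop:genhighdirim} applies verbatim with $S=\P_A^n$, its hypothesis being furnished by Lemma~\ref{lem:genprojred} over the pairs $(B^h,IB^h)$. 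You instead re-run the stalk computation of Proposition~\ref{prop:genhighdirim} directly, using cofinality of \Et neighborhoods $U$ over which $f$ is projective and applying Lemma~\ref{lem:genprojred} to the projective $B^h$-schemes $(Y_U)_{B^h}$; this is exactly the shape of the paper's later Lemma~\ref{lem:henshighdirim2}, and it works, at the cost of duplicating the limit argument that the paper reuses as a black box. The one place where your write-up is heavier than necessary (and, as phrased, slightly misaimed) is the transfer of the standing hypothesis from $(A,I)$ to $(B^h,IB^h)$: for an (essentially) finite-type $B^h$-algebra $C$, descend $C$ to a finite stage $B_\lambda$ of the ind-\'etale presentation of $B^h$ and use the universal property of Henselization to get $C^h\simeq C_0^h$ with $C_0$ (essentially) finite type over $A$; then the hypothesis applies verbatim and no passage of cohomology to a colimit is needed. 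Your colimit mechanism can be completed, but note that Lemmas~\ref{lem:seplimcohom} and~\ref{lem:henslimcohom} are statements about Zariski cohomology, whereas \cmpsn is an \hetale statement and Corollary~\ref{cor:hetzarcmpsn} is not available in this characteristic-free step; you would need the \'etale limit results of \cite[Section 5.9]{leifu} together with Proposition~\ref{prop:globcatequiv} (plus descent of the coherent sheaf to a finite stage), which is precisely the obstacle you flagged and which the universal-property argument avoids.
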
 

\begin{proof} This can be checked affine-locally on $X$, so we can assume $X$ is affine and then that $Y=\P_X^n$. Then we can apply Proposition~\ref{prop:genhighdirim} with $S=\P_A^n$ to get the desired result by Lemma~\ref{lem:genprojred}.\end{proof}

We can now proceed with reducing the proper case to the projective case.

\begin{theorem}\label{thm:propred} Let $(A,I)$ be a Noetherian Henselian pair. Assume that for any $A$-algebra $B$ of essentially finite-type with $I$-adic Henselization $B^h$, the $B^h$-scheme $\P_{B^h}^1$ satisfies \cmpsn. Then any proper $A$-scheme $X$ satisfies \cmpsn.
\end{theorem}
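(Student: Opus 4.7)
The plan is to apply Grothendieck's dévissage (``unscrewing'') lemma to reduce the proper case to the projective case, which is already handled by Lemma~\ref{lem:genprojred}. Fix a proper $A$-scheme $X$, and let $\mathcal{P}$ denote the property ``satisfies \shfcm{X}'' on coherent sheaves on $X$; I would verify the two hypotheses of dévissage for $\mathcal{P}$.

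First, I would check that $\mathcal{P}$ satisfies two-out-of-three in short exact sequences. Henselization $(\cdot)^h$ is exact on coherent sheaves (\hfcite{\lemhensfaithexact}), and the passage to \hetale sheaves via $(\F^h)_\het = i_\et^{-1}\F_\et$ (Remark~\ref{rmk:morhet}) is also exact, so a short exact sequence of coherent sheaves on $X$ produces short exact sequences on both $X_\et$ and $(X^h)_\het$. Comparing the associated long exact sequences of cohomology via the five-lemma then gives the two-out-of-three property.

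For the main inductive step, fix an integral closed subscheme $Z \subseteq X$ and assume, by Noetherian induction on closed subsets of $X$, that every coherent sheaf on $X$ with support strictly contained in $Z$ satisfies $\mathcal{P}$. I will produce a coherent sheaf on $X$ with support exactly $Z$ satisfying $\mathcal{P}$. Since $Z$ is a closed subscheme of the proper $A$-scheme $X$ it is itself proper over $A$, so Chow's lemma produces a projective $A$-scheme $Z'$ and a projective surjection $\pi: Z' \to Z$ restricting to an isomorphism over some dense open $U \subseteq Z$. Writing $j: Z \hookrightarrow X$ for the closed immersion and $\pi' := j \circ \pi: Z' \to X$, the candidate is $\F := \pi'_{*}\O_{Z'}$, which is coherent on $X$ with support $Z$.

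To verify $\mathcal{P}(\F)$, I would compare the Leray spectral sequence for $\pi'$ in the \Et topology,
\[ E_2^{p,q} = \H^p_\et\bigl(X, R^q\pi'_{\et,*}\O_{Z'}\bigr) \Rightarrow \H^{p+q}_\et(Z',\O_{Z'}), \]
with its \hetale analog on $X^h$. By Lemma~\ref{lem:henshighdirim}, $(R^q\pi'_{\et,*}\O_{Z'})^h$ is naturally isomorphic to $R^q(\pi')^h_{\het,*}\O_{(Z')^h}$, yielding a map of spectral sequences compatible on abutments. The abutment map is an isomorphism by Lemma~\ref{lem:genprojred} since $Z'$ is projective over $A$. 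For $q > 0$, the sheaf $R^q\pi'_{\et,*}\O_{Z'}$ vanishes on $U$ (where $\pi$ is an isomorphism) as well as on $X \setminus Z$, so it is supported in $Z \setminus U \subsetneq Z$; by the inductive hypothesis it satisfies $\mathcal{P}$, and hence the map of spectral sequences is an isomorphism on $E_2^{p,q}$ for $q \ge 1$. A standard spectral-sequence comparison argument---induction on $p$, exploiting that in a first-quadrant convergent spectral sequence iso on $E_2^{p,q}$ for $q \ge 1$ together with iso on the abutment forces iso on $E_2^{p,0}$ (the differentials out of $E_r^{p,0}$ vanishing for $r \ge 2$)---then gives $\H^p_\et(X,\F) \simto \H^p_\het(X^h,\F^h)$ for all $p$, completing the induction.

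The principal technical obstacle is precisely this last bookkeeping step of comparing the two spectral sequences to extract information about $E_2^{p,0}$; the remaining ingredients (Chow's lemma, Lemma~\ref{lem:henshighdirim}, and dévissage) combine rather formally once one commits to working with \Et/\hetale cohomology throughout.
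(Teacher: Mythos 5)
Your outline follows the paper's skeleton -- d\'evissage, Noetherian induction, Chow's lemma, the two-out-of-three step, and the identification of $E_2$-terms via Lemma~\ref{lem:henshighdirim} -- but it diverges at the decisive point. The paper reduces to $X$ integral, applies Chow's lemma to $X$ itself, and takes $\G=\O_{X'}(n)$ with $n\gg 0$, so that $R^q\pi_{\et,*}\G=0$ for $q>0$ by relative Serre vanishing; both Leray spectral sequences then degenerate at $E_2$, and the comparison for $\pi_*\G$ (whose generic stalk is invertible) falls out with no bookkeeping. You instead take the untwisted pushforward $\pi'_*\O_{Z'}$ from a Chow cover of each integral closed subset $Z$ and absorb the nonvanishing $R^q\pi'_{\et,*}\O_{Z'}$ ($q\ge 1$) into the induction, using that they are coherent and supported in $Z\setminus U\subsetneq Z$. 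That is a legitimate alternative and gives a slightly cleaner induction (no separate reducible/non-reduced discussion), but it transfers all the difficulty into the final spectral-sequence comparison. Do also record that the d\'evissage lemma requires the generic stalk on $Z$ to have rank one; your $\pi'_*\O_{Z'}$ satisfies this because $\pi$ is an isomorphism over a dense open of the integral $Z$, but you never say so.

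The soft spot is precisely the step you call bookkeeping. The parenthetical reason you give (vanishing of the differentials out of $E_r^{p,0}$) addresses the harmless direction; the real issue is the incoming differentials $d_r\colon E_r^{p-r,r-1}\to E_r^{p,0}$, whose images on the two sides must be matched before an isomorphism on the abutments and on the rows $q\ge 1$ can be transported to $E_2^{p,0}$. The statement you invoke is true, but proving it requires an interlocking induction over pages and total degrees (injectivity and surjectivity of the comparison at the various $E_r$-terms interact through exactly these differentials), not a one-line remark; as written this is a gap in the argument rather than in the strategy. Two painless repairs: (a) run the comparison through the truncation triangle $\pi'_{\et,*}\O_{Z'}\to R\pi'_{\et,*}\O_{Z'}\to \tau_{\ge 1}R\pi'_{\et,*}\O_{Z'}$ -- the middle term compares by Lemma~\ref{lem:genprojred} via Lemma~\ref{lem:finpushhens}/Lemma~\ref{lem:pipush}-style identifications, the right-hand term compares because all of its cohomology sheaves satisfy \shfcm{X} by the inductive hypothesis together with Lemma~\ref{lem:henshighdirim}, and the five lemma then gives the comparison for $\pi'_{\et,*}\O_{Z'}$; or (b) simply twist, replacing $\O_{Z'}$ by $\O_{Z'}(n)$ for $n\gg 0$ so the higher direct images vanish and both spectral sequences degenerate -- which is exactly the paper's route.
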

\begin{proof} We will use \cite[Theorem 3.1.2]{ega31}, sometimes called ``Grothendieck's Unscrewing Lemma'' or ``Grothendieck's d\'{e}vissage theorem''.

 If we consider the full subcategory $\mc{C}$ of coherent $\O_X$-modules consisting of $\F$ satisfying \shfcm{X}, then it is obvious that $0 \in \mc{C}$ and that for any short exact sequence of coherent $\O_X$-modules $\F' \hookrightarrow \F \twoheadrightarrow \F''$, if two of $\F,\F',\F''$ are in $\mathcal{C}$, all three are. (The latter statement follows by the $\delta$-functoriality of the \hetale comparison maps and the five lemma.) 
 
By the Unscrewing Lemma, in order to show that {\it every} coherent $\O_X$-module $\F$ satisfies \shfcm{X} (or equivalently, that $\mathcal{C}$ contains all coherent $\O_X$-modules), it is enough to show that for any irreducible closed subset $Z \subset X$, there exists some $\G$ supported on $Z$ and satisfying \shfcm{X} whose fiber at the generic point of $Z$ has rank $1$.

We now use Noetherian induction on $X$. For our inductive assumption, we assume that for any strict closed subscheme $Y$ of $X$ (strict meaning $Y \subsetneq X$) that every coherent $\O_Y$-module satisfies \shfcm{Y}. Note that $Y$ is necessarily proper over $A$. If $\G$ is a coherent $\O_X$-module which is supported on a strict closed subscheme $Y$ of $X$, then considering $\G$ as a coherent $\O_Y$-module, it follows from the inductive assumption that $\G$ satisfies \shfcm{X}.

If $X$ is reducible or not reduced, then any irreducible closed subset $Z \subset X$ must be contained in a strict closed subscheme $Y$ of $X$, so any $\G$ supported on $Z$ whose fiber at the generic point of $Z$ has rank $1$ must satisfy \shfcm{X} by the inductive assumption. Thus by the Unscrewing Lemma, every coherent $\O_X$-module $\F$ satisfies \shfcm{X} if $X$ is reducible or not reduced. 

It is therefore enough to consider the case where $X$ is integral and, given the inductive assumption, exhibit a coherent $\O_X$-module $\F$ with generic stalk of rank $1$ satisfying \shfcm{X}. Then by the Unscrewing Lemma we are done.

Since $X$ is integral, we can use Chow's Lemma \spcite{02O2}{Section} to find an integral projective $A$-scheme $X'$ and a morphism $\pi: X' \to X$ over $A$ such that $\pi$ is projective and surjective, as well as a dense open $U \subset X$ such that $\pi|_{\pi^{-1}(U)}: \pi^{-1}(U) \to U$ is an isomorphism. 

Now let $\G=\O_{X'}(n)$ with $n>0$. Since $\pi|_{\pi^{-1}(U)}$ is an isomorphism, we see that the generic stalk of the coherent $\O_X$-module $\pi_*\G$ is invertible. Therefore it will suffice to show that $\pi_*\G$ satisfies \shfcm{X} for some $n$.

Consider the Leray spectral sequences for $\G_\et$ and $(\G_\et)^h$. Thus we get a spectral sequence of $A$-modules $E_\bullet^\dublet$ with second sheet $E_2^{i,j} = \H^i_{\et}(X,R^j\pi_{*}\G)$ which converges to $\H^{i+j}_{\et}(X',\G)$ (inducing finite filtrations on the limit terms), and another spectral sequence of $A$-modules  $Q_\bullet^\dublet$ with second sheet $Q_2^{i,j} = \H^i_{\het}(X^h,R^j\pi^h_*(\G^h))$ which converges to $\H^{i+j}_{\het}((X')^h,\G^h)$ (inducing finite filtrations on the limit terms).

Working through the construction of these Leray spectral sequences, we have a morphism $\phi:E_\bullet^\dublet \to Q_\bullet^\dublet$ coming from the pullback of sheaves on $X'$ to sheaves on $(X')^h$ (see also discussion in the proof of Proposition~\ref{prop:relp1}). Since for all $j$ we have $((R^j\pi_*\G)_{\et})^h=((R^j\pi_*\G)^h)_{\het}$ by Remark~\ref{rmk:morhet} and the map $((R^j\pi_*\G)^h)_{\het} \to (R^j\pi^h_*(\G^h))_{\het}$ is an isomorphism by Lemma~\ref{lem:henshighdirim}, we see that $\phi:E_\bullet^\dublet \to Q_\bullet^\dublet$ is compatible with pullback of sheaves on $X$ to sheaves on $X^h$.

 Furthermore, since we have \cmpsn for $X'$ by Lemma~\ref{lem:genprojred}, we know that this morphism is an isomorphism on the $\infty$-sheet (because the natural map between the limit terms is an isomorphism). 

The higher direct image sheaf $(R^j\pi_*\G)_{\et}=R^j\pi_{\et,*}\G_{\et}$ is the sheafification of the presheaf \linebreak$V \mapsto \H^j_{\et}(\pi^{-1}(V),\G)$. For $n$ sufficiently large, independent of the affine open $V$, and $j > 0$, each of the cohomology groups $\H^j_{\et}(\pi^{-1}(V),\G)=\H^j_{\et}(\pi^{-1}(V),\O_{X'}(n))$ vanishes. Therefore for $j > 0$, the sheaf $(R^j\pi_*\G)_{\et}$ is $0$. Hence we also have $0=((R^j\pi_*\G)_{\et})^h = ((R^j\pi_*\G)^h)_{\het} \simto (R^j\pi^h_*(\G^h))_{\het}$ for $j > 0$ (see Remark~\ref{rmk:morhet}, Lemma~\ref{lem:henshighdirim}). 

Therefore both spectral sequences $E_\bullet^\dublet$ and $Q_\bullet^\dublet$ degenerate at the second sheet. Then since our morphism of spectral sequences induces an isomorphism on the $\infty$ sheet, the same is true for the second sheet, meaning the comparison map $\H^i_{\et}(X,\pi_*\G) \to \H^i_{\het}(X^h,\pi^h_*(\G^h)) \simeq \H^i_{\het}(X^h,(\pi_*\G)^h)$ is an isomorphism for all $i$. Thus $\pi_*\G$ satisfies \shfcm{X} for large $n$. \end{proof}

\subsection{Cohomology of the Henselian projective line}\label{sec:cohprojline}

In the previous sections we reduced the problem of \cmpsn for a proper scheme over a Noetherian Henselian pair $(A,I)$ to the problem of \cmpsn for $\P_{B^h}^1$ with $(B^h,IB^h)$ the $I$-adic Henselization of a general essentially finite-type $A$-algebra $B$.

For our more explicit computations with the projective line, we will need to assume that $A$ has characteristic $p > 0$ in order to leverage the vanishing of higher cohomologies for quasi-coherent sheaves on affine Henselian schemes (as holds over $\Fp$). The Zariski version of this ``Theorem B'' for Henselian schemes is stated here in Theorem~\ref{thm:thmbpos}, and the \hetale version is stated here in Lemma~\ref{lem:thmbhet}.

 Now we consider the Henselian projective line and show that for GHGA over $(A,I)$ (for $A$ over $\Fp$) it suffices to consider just $\H^1_{\et}(\P_A^1,\O_{\P_A^1})$.
 
\begin{proposition}\label{prop:p1red} Let $(A,I)$  be a Henselian pair with $A$ Noetherian and such that $A$ has characteristic $p > 0$.  Let $P=\P_A^1$ be the projective line over $A$, with $P^h$ its $I$-adic Henselization. Assume that $\O_P$ satisfies \shfcm{P}. Then all coherent sheaves $\F$ on $P$ also satisfy \shfcm{P}. \end{proposition}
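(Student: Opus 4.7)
The plan is to bootstrap from the hypothesis of \shfcm{P} for $\O_P$ first to every twist $\O_P(n)$, and from there to an arbitrary coherent sheaf on $P$. The structural fact that makes this manageable is that both sides of the comparison have cohomological dimension at most $1$ on $P = \P_A^1$: for \Et cohomology this is the usual Čech computation on the two-element affine cover $\{U_0, U_1\}$ of $P$, and for \hetale cohomology on $P^h$ it follows from Čech on the cover $\{U_0^h, U_1^h\}$ combined with Lemma~\ref{lem:thmbhet}, whose vanishing on affine Henselians uses that $A$ is an $\Fp$-algebra. Thus \shfcm{P} holds automatically in degrees $\ge 2$ for any quasi-coherent sheaf, and only degrees $0$ and $1$ need be treated.

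For the first reduction step I would fix an $A$-section $i\colon Z \hookrightarrow P$ (say the section at infinity), giving the short exact sequence
\[ 0 \to \O_P(n-1) \to \O_P(n) \to i_*\O_Z \to 0, \]
obtained by twisting the standard sequence for the Cartier divisor $Z$. The sheaf $i_*\O_Z$ satisfies \shfcm{P} essentially trivially: since $i$ is finite its pushforward commutes with pullback to the Henselization and is exact in both the \Et and \hetale topologies, so the cohomology in either topology reduces to that of $\O_Z$ on $Z = \spec A$ and of $\O_{Z^h}$ on $Z^h = \sph(A,I)$ respectively, both of which vanish in positive degrees (by affine vanishing on schemes and Lemma~\ref{lem:thmbhet} on affine Henselians) and equal $A$ in degree $0$. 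The five lemma applied to the long exact sequences of \Et and \hetale cohomology, which truncate after $\H^1$ by the preceding paragraph, then propagates \shfcm{P} from $\O_P = \O_P(0)$ both upward and downward to every $\O_P(n)$ with $n \in \Z$.

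For a general coherent $\F$ on $P$, I would iterate Serre's theorem to construct a resolution
\[ \cdots \to E^{-1} \to E^0 \to \F \to 0 \]
in which each $E^{-i}$ is a finite direct sum of twists $\O_P(-n_{ij})$. Since the canonical ringed-space morphism $P^h \to P$ is flat (Henselization being a filtered colimit of \Et maps), pulling back term-wise yields an analogous resolution $\cdots \to E^{-1,h} \to E^{0,h} \to \F^h \to 0$ of $\F^h$ on $P^h$. I would then compare the first hypercohomology spectral sequences
\[ E_1^{p,q} = \H^q_\et(P, E^p) \Rightarrow \H^{p+q}_\et(P, \F), \qquad E_1^{p,q} = \H^q_\het(P^h, E^{p,h}) \Rightarrow \H^{p+q}_\het(P^h, \F^h), \]
the comparison map between them being induced term-wise by \shfcm{P} for each $E^p$.

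The main technical issue is convergence: a priori the resolution is infinite, since Hilbert syzygies need not terminate over a general Noetherian base $A$. This is handled by the cohomological-dimension bound of the first paragraph, which confines both spectral sequences to the two rows $q \in \{0, 1\}$; in each total degree $n$ only the terms at $(p, q) \in \{(n, 0), (n-1, 1)\}$ can be nonzero, so the spectral sequences are locally finite in each total degree and converge strongly. By the second paragraph each $E^p$ is a finite direct sum of line bundles $\O_P(m)$ and thus satisfies \shfcm{P}, so the comparison map is an isomorphism on the $E_1$ page, hence on $E_2$, and therefore on the abutments---yielding \shfcm{P} for $\F$ and completing the argument.
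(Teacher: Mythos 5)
Your proposal is correct, and its first half is the paper's argument in all but notation: the twisting sequence $0 \to \O_P(n-1) \to \O_P(n) \to i_*\O_Z \to 0$ attached to an $A$-section of $P$, with $i_*\O_Z$ handled by compatibility and exactness of finite pushforward in the \Et/\hetale topologies together with affine vanishing (Lemma~\ref{lem:thmbhet}), and the five lemma propagating \shfcm{P} from $\O_P$ to every twist $\O_P(n)$. Where you genuinely diverge is the passage from twists to an arbitrary coherent $\F$: the paper takes a single presentation $0 \to \mc{K} \to \bigoplus_{\ell} \O_P(n_\ell) \to \F \to 0$ and runs a descending induction on the cohomological degree, starting from the trivially true range $j \ge 2$ and applying the Four Lemma twice (first surjectivity of the comparison map in degree $j$ for every coherent sheaf, then, feeding that back in for $\mc{K}$, injectivity), whereas you resolve $\F$ by finite sums of twists and compare hypercohomology spectral sequences, much in the style of the paper's own proof of Proposition~\ref{prop:finflatprojred}. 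Both work; the paper's d\'evissage stays entirely within bounded data and elementary diagram chases, while your route gives a one-shot argument at the price of an unbounded resolution.

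That price is the one spot where you are too quick. It is the $E_1$-page, not the underlying Cartan--Eilenberg double complex, that is confined to the rows $q \in \{0,1\}$: the injective resolutions of the terms $E^p$ occupy all $q \ge 0$, so ``only finitely many terms in each total degree'' is not literally true of the filtered total complex, and strong convergence for this second-quadrant-type double complex requires more than local finiteness of the $E_1$-page. The repair is immediate from your own cohomological-dimension bound and does not change the structure of your argument: since $\H^j_{\et}(P,\cdot)$ and $\H^j_{\het}(P^h,\cdot)$ vanish in degrees $j \ge 2$ on the sheaves in play, only total degrees $0$ and $1$ need comparing, so you may replace the resolution by its brutal truncation $E^{-2} \to E^{-1} \to E^{0}$; the discrepancy with $\F[0]$ is a coherent sheaf sitting in degree $-2$, whose cohomology in either topology contributes only in total degrees $\le -1$, hence degrees $0$ and $1$ are unaffected, and for the resulting bounded double complexes convergence is automatic. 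With that truncation (or an equivalent finite-dimensionality argument) in place, your proof is complete and is a legitimate alternative to the paper's Four Lemma induction.
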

\begin{proof} 

We start by considering twists of the structure sheaf $\O_P(n)$ for all integers $n$. 

Write $P = {\rm Proj}(A[x_0,x_1])$. Then for each integer $n$ we have a short exact sequence of sheaves on $P$: $$0 \to \O_P(n-1) \xrightarrow{x_1} \O_P(n) \to \G_n \to 0$$ where the first map is given by multiplication by $x_1$. Then $\G_n$ is the structure sheaf of the affine section $[1,0] \in \P_A^1=P$, so $\G_n$ satisfies \shfcm{P} by Lemma~\ref{lem:thmbhet}.

Since pullback to the Henselization is exact (as the maps of local rings are flat), we have a short exact sequence of sheaves on $P^h$ $$0 \to (\O_P(n-1))^h \xrightarrow{x_1} (\O_P(n))^h \to \G_n^h \to 0.$$ These short exact sequences give us maps of the corresponding long exact sequences of \Et cohomology and of \hetale cohomology.  If $\G_n$ and $\O_P(n)$ satisfy \shfcm{P}, then the same is true for $\O_P(n-1)$. Thus using downwards inductions starting with $n=0$, we see that for all $n \le 0$ the twist $\O_P(n)$ satisfies \shfcm{P}.

Similarly, we see that if $\G_n$ and $\O_P(n-1)$ satisfy \shfcm{P}, then the same is true for $\O_P(n)$. Now using induction upwards from $n=1$, we see that for all $n > 0$ ---hence for any integer $n$---the twist $\O_P(n)$ satisfies \shfcm{P}.

For a general coherent sheaf $\F$ on $P$, we have a short exact sequence of sheaves $$0 \to \mc{K} \to \bigoplus_{\ell=0}^r \O_P(n_\ell) \to \F \to 0$$ for some finite collection of integers $n_0,\dots,n_r$. Note that $\mc{K}$ is also coherent as the kernel of a map between coherent sheaves. 

For $j \ge 2$, we know that $\H^j_{\et}(P,\G)=0$ for all quasi-coherent $\G$. Similarly by Lemma~\ref{lem:thmbhet}, $\H^j(P^h,\G^h)=0$ for $j  \ge 2$, because we can compute the \hetale cohomology of $\G^h$ on $P^h$ with the \v{C}ech complex associated to the Henselization of the standard affine open cover of $P$ by \spcite{03F7}{Lemma}. (Note that the Henselization of the standard affine open cover of $P$ is necessarily also an \hetale cover.)

Therefore the comparison map $\H^j_{\et}(P,\G) \to \H^j_{\het}(P^h,\G^h)$ is an isomorphism for $j \ge 2$ and all coherent $\G$. Now we use downwards induction on $j$, assuming that for all coherent sheaves $\G$ we have $\H^{j+1}_{\et}(P,\G) \simeq \H^{j+1}_{\het}(P^h,\G^h)$.

Writing $\mc{N}$ for $\bigoplus_{\ell=0}^r \O_P(n_\ell)$, the long exact sequence of cohomology gives us a commutative diagram with exact rows

\begin{centering}  
\begin{tikzcd}[column sep=1.1em]
\H^j_{\et}(P,\mc{K}) \arrow[r]\arrow[d] &
\H^j_{\et}\left(P, \mc{N}\right) \arrow[r]\arrow[Isom,d] &
 \H^j_{\et}(P,\F) \arrow[r]\arrow[d] &
 \H^{j+1}_{\et}(P,\mc{K}) \arrow[r]\arrow[Isom,d] &
 \H^{j+1}_{\et}\left(P, \mc{N}\right) \arrow[Isom,d]\\
\H^j_{\het}(P^h,\mc{K}^h) \arrow[r] & \H^j_{\het}\left(P^h, \mc{N}^h \right)\arrow[r] & \H^j_{\het}(P^h,\F^h) \arrow[r]& \H^{j+1}_{\het}(P^h,\mc{K}^h) \arrow[r] & \H^{j+1}_{\het}\left(P^h, \mc{N} ^h\right)
\end{tikzcd} 

\end{centering}

where we note that $\mc{N}=\bigoplus_{\ell=0}^r \O_P(n_\ell)$ satisfies \shfcm{P} 
and $\H^{j+1}_{\et}(P,\mc{K}) \simeq \H^{j+1}_{\het}(P^h,\mc{K}^h)$ by the inductive hypothesis on $j+1$. By the first Four Lemma, the middle arrow $\H^j_{\et}(P,\F) \to \H^j_{\het}(P^h,\F^h)$ is surjective for arbitrary coherent $\F$. Hence by considering a similar commutative diagram corresponding to a presentation of $\mc{K}$, the map $\H^j_{\et}(P,\mc{K}) \to \H^j_{\het}(P^h,\mc{K}^h)$ is also surjective. Therefore we can use the second Four Lemma to get injectivity of the middle arrow; hence the middle arrow is an isomorphism.

Therefore all coherent sheaves $\F$ on $P$ satisfy \shfcm{P} if $\O_P$ does. \end{proof}

To show that $\O_{\P^1}$ satisfies \shfcm{\P^1}, we first check in degree $0$.

\begin{proposition}\label{prop:h0p1} Let $(A,I)$  be a Henselian pair with $A$ Noetherian. Let $P=\P_A^1$ be the projective line over $A$, with $P^h$ its $I$-adic Henselization. Then the comparison map $A=\H_{\et}^0(P,\O_P) \to \H^0_{\het}(P^h,\O_{P^h})$ is an isomorphism.
\end{proposition}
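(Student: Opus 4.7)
The plan is to reduce to a concrete Čech computation on the standard two-chart cover of $P^h$ and then combine a reduction-modulo-$I^n$ argument with the canonical evaluation $t \mapsto 0$ together with Krull's intersection theorem. Since $\H^0$ of any sheaf is simply its sections on the final object, we have $\H^0_\het(P^h, \O_{P^h}) = \H^0_\zar(P^h, \O_{P^h})$, and the comparison map of the proposition is the natural inclusion of constants $A \to \H^0_\zar(P^h, \O_{P^h})$. Writing $P^h = U_0^h \cup U_1^h$ with $U_0^h = \sph(A[t]^h)$, $U_1^h = \sph(A[s]^h)$, and overlap $\sph(A[t,t^{-1}]^h)$ (Henselizations taken with respect to $I$ times the respective ring), the sheaf axiom identifies
\[
\H^0_\zar(P^h, \O_{P^h}) = \bigl\{(f_0, f_1) \in A[t]^h \times A[s]^h : f_0 = f_1 \text{ in } A[t,t^{-1}]^h\bigr\},
\]
into which $A$ embeds diagonally; I must show this embedding is a bijection, for which only surjectivity is not immediate.

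Fix $(f_0, f_1)$ in the equalizer. Using that Henselization along $I$ leaves each $I^n$-reduction unchanged, we have $A[t]^h/I^n A[t]^h = (A/I^n)[t]$ and similarly on the other patch and the overlap. Reducing $(f_0, f_1)$ modulo $I^n$ therefore yields a global section of $\O_{\P^1_{A/I^n}}$, necessarily a constant $a_n \in A/I^n$, and these constants are compatible in $n$. To upgrade this compatible system to a single element of $A$ (rather than merely an element of $\widehat A$), I use that the $A$-algebra map $A[t] \to A$ sending $t \mapsto 0$ is a morphism of Henselian pairs $(A[t], IA[t]) \to (A, I)$, so by the universal property of Henselization it extends uniquely to an evaluation $e_0: A[t]^h \to A$. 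Setting $a := e_0(f_0) \in A$, compatibility of $e_0$ with reduction mod $I^n$ forces $a \equiv a_n \pmod{I^n}$, whence $f_0 - a \in I^n A[t]^h$ for every $n$.

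The final step is to conclude $f_0 = a$ from $\bigcap_n I^n A[t]^h = 0$, which is Krull's intersection theorem: the ring $A[t]^h$ is Noetherian (a standard fact about Henselization of a Noetherian ring along an ideal), and $IA[t]^h$ lies in the Jacobson radical since $(A[t]^h, IA[t]^h)$ is a Henselian pair. A symmetric application of the evaluation $s \mapsto 0$ on $A[s]^h$ gives $f_1 = b \in A$, and reducing modulo each $I^n$ forces $a \equiv b \pmod{I^n}$, so $a = b$ (Krull again in the Noetherian Henselian $A$). Hence $(f_0, f_1) = (a, a)$ lies in the diagonal image of $A$, as required. The most delicate ingredient is the Noetherianity of $A[t]^h$---equivalently the vanishing $\bigcap_n I^n A[t]^h = 0$---which one cites from the standard theory of Noetherian Henselian pairs; without it, the argument would only land $(f_0, f_1)$ in the image of $\widehat A$, insufficient to produce an element of $A$ itself.
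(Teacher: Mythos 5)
Your proof is correct, and while it starts from the same place as the paper (reducing $\H^0_{\het}$ to Zariski global sections and writing them as the equalizer for the standard two-chart cover, i.e.\ pairs $(f_0,f_1)\in A[t]^h\times A[1/t]^h$ agreeing in $A[t,1/t]^h$), the mechanism you use afterwards is genuinely different. The paper embeds the Henselizations into the $I$-adic completions (faithfully flat, hence injective, by the same Stacks tag 0AGV that also gives the Noetherianity of $A[t]^h$ you invoke) and then identifies the kernel of the completed \v{C}ech map by explicit restricted power-series bookkeeping. You instead argue by d\'evissage modulo $I^n$, using $A[t]^h/I^nA[t]^h=(A/I^n)[t]$, the classical computation $\H^0(\P^1_{A/I^n},\O)=A/I^n$, the evaluation section $e_0\colon A[t]^h\to A$ at $t=0$ supplied by the universal property of Henselization over the Henselian pair $(A,I)$, and Krull's intersection theorem in the Noetherian ring $A[t]^h$ (with $IA[t]^h$ in the Jacobson radical); your Krull step is of course equivalent to the injectivity of $A[t]^h\to (A[t])^\wedge$ that the paper uses. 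What your route buys is worth noting: the kernel of the completed difference map $(A[t])^\wedge\times(A[1/t])^\wedge\to(A[t,1/t])^\wedge$ is really the diagonal copy of $\widehat{A}$ (the coefficients of restricted power series lie in $\widehat{A}$, not $A$), so the paper's concluding identification implicitly needs exactly the kind of extra step you make explicit--your evaluation map $e_0$ is what pins the constant down inside $A$ rather than $\widehat{A}$, and it also isolates precisely where the Henselian hypothesis on $(A,I)$ enters. The paper's approach, in exchange, is shorter and sets up the completion/power-series picture that it reuses in the $\H^1$ computation of Lemma 3.5.8 and in Example 4.4.1.
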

\begin{proof} In order to compute $\H^0_{\het}(P^h,\O_{P^h})$, we can describe $P$ as the union of two affine opens $U = \spec(A[t])$ and $V=\spec(A[1/t])$, with $U \cap V = \spec(A[t,1/t])$. We will write $A\{t\}$ for the $I$-adic Henselization $(A[t])^h$ of the polynomial ring $A[t]$, and similarly $A\{1/t\}=(A[1/t])^h, A\{t,1/t\}=(A[t,1/t])^h$ for the $I$-adic Henselizations of $A[1/t],A[t,1/t]$.

We note that the map $A\{t\} \to (A[t])^\wedge$ is faithfully flat, hence injective, by \spcite{0AGV}{Lemma}. Similarly we have injections $A\{1/t\} \hookrightarrow (A[1/t])^\wedge$ and $A\{t,1/t\} \hookrightarrow (A[t,1/t])^\wedge.$ Elements of the completion $(A[t])^\wedge$ have the form $\sum_{n=0}^\infty a_nt^n$ such that for any $N \ge 1$, there exists $M$ so that $a_n \in I^N$ for $n \ge M$. Elements of the other completions are described similarly as series in $t^{-1}$ or two-sided power series in $t$ with coefficients tending $I$-adically to $0$ as the exponents go to $\pm \infty$. 

Consider the map $A\{t\} \times A\{1/t\} \to A\{t,1/t\}$ given by $(f,g) \mapsto  f-g$ (we write $f,g$ also for their respective images in $A\{t,1/t\}$), and the similarly defined map $(A[t])^\wedge \times (A[1/t])^\wedge \to (A[t,1/t])^\wedge$. The kernel of the first map is $\H^0_{\het}(P^h,\O_{P^h})$ by the sheaf condition; it's clear that $A \subset \H^0_{\het}(P^h,\O_{P^h})$ (via the diagonal map $A \to A\{t\} \times A\{1/t\}$). 

Also, we see that $\H^0_{\het}(P^h,\O_{P^h})$ is contained in the kernel of $(A[t])^\wedge \times (A[1/t])^\wedge \to (A[t,1/t])^\wedge$, which we can see is $A$ using the explicit description of elements of these completions given above. Then $\H^0_{\het}(P^h,\O_{P^h}) = A = \H^0_{\et}(P,\O_P)$ as we desired to show.\end{proof}

To prove that the structure sheaf on $P=\P_A^1$ for $A$ a Henselian $\mathbf{F}_p$-algebra satisfies GHGA comparison, it remains to show that $\H^i_{\het}(P^h,\O_{P^h})=0$ for $i \ge 1$. Since $A$ is a Henselian $\mathbf{F}_p$-algebra, by Theorem~\ref{thm:hetzarcoh} it suffices to show that the {\it Zariski} cohomology $\H^i(P^h,\O_{P^h})$ vanishes.

The projective line $P=\P_A^1$ is covered by the two affine opens $U=\spec(A[t]), V = \spec(A[1/t])$. Since we have ``Theorem B'' for affine Henselian schemes in positive characteristic (Theorem~\ref{thm:thmbpos}), we can use the two-term \v{C}ech complex $A\{t\} \times A\{1/t\} \to A\{t,1/t\}$ to compute the cohomology $\H^i(P^h,\O_{P^h})$; it follows that $\H^i_{\het}(P^h,\O_{P^h})=0$ for $i > 1$.

In order to compute $\H^1(P^h,\O_{P^h})$ with the \v{C}ech complex $A\{t\} \times A\{1/t\} \to A\{t,1/t\}$, we will identify the $I$-adic Henselization as the subring of elements of the $I$-adic completion which are ``algebraic'' over the base. This will be shown using approximation for Henselian pairs with the aid of a $G$-ring hypothesis in order to leverage a powerful theorem of Popescu:

\begin{theorem}[Popescu]\label{thm:pop} A regular homomorphism of Noetherian rings is a filtered colimit of smooth ring maps.
\end{theorem}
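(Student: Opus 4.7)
The plan is to take the classical Néron--Popescu desingularization strategy. Given a regular homomorphism $A \to B$ of Noetherian rings, the goal is to write $B$ as a filtered colimit of smooth $A$-algebras $B_\lambda$. Since any ring map is the filtered colimit of its finitely generated subalgebras $C \subset B$, it suffices to show that every such $C$ fits into a factorization $A \to D \to B$ with $D$ a smooth $A$-algebra of finite type; taking the colimit of these intermediate $D$'s then produces the desired presentation. Thus the statement is really a ``smoothing'' result: any finitely generated subalgebra of $B$ can be dominated by a smooth $A$-subalgebra of $B$.

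The main work would go into producing $D$. Write $C=A[x_1,\dots,x_n]/(f_1,\dots,f_m)$ and measure its failure of smoothness over $A$ by the height of the ideal in $C$ generated by the appropriate maximal minors of the Jacobian matrix $(\partial f_i/\partial x_j)$, invoking the Jacobian criterion for smoothness. One then proceeds by induction on this ``non-smooth locus'', modifying the presentation by adjoining carefully chosen auxiliary variables and relations so that (i) the new presentation still maps compatibly to $B$, and (ii) the non-smooth locus shrinks. The regularity hypothesis on $A \to B$ (geometrically regular fibers) is used crucially at each inductive step to extract, from the image in $B$, elements that witness enough algebraic independence to produce these auxiliary relations. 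The base case corresponds to $C$ being smooth, where one simply sets $D=C$.

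The hard part is unquestionably this inductive step: constructing the ``elementary standard'' smooth extensions that simultaneously reduce the non-smooth locus and preserve the factorization through $B$. This is the technical heart of Popescu's theorem and the source of all its difficulty; Popescu's original argument was found to have gaps later repaired in work of Ogoma, André, Swan, and Spivakovsky. Since a complete self-contained proof would occupy many pages and is orthogonal to the Henselian geometry under consideration here, the sensible course for this paper is to quote the result as a black box and refer the reader to one of the standard treatments (such as Swan's exposition) for the full argument.
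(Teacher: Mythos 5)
Your proposal is correct and ends up exactly where the paper does: the paper gives no independent argument, simply citing the Stacks Project (Tag 07GC) and Swan's exposition of Popescu's proof, which is precisely the "quote it as a black box" course you recommend. Your sketch of the Néron--Popescu smoothing strategy is an accurate summary of the standard proof, but none of it is reproduced in the paper itself.
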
 
\begin{proof} See \spcite{07GC}{Theorem}. Swan gives an exposition of Popescu's proof of this theorem in \cite{swan}.\end{proof}

More specifically, we will use Artin-Popescu approximation for Henselian pairs, which is a consequence of Popescu's Theorem. This is \spcite{0AH5}{Lemma}, which we state here without proof.

\begin{theorem}[Artin-Popescu approximation]\label{thm:artpop} Let $(A,I)$ be a Noetherian Henselian pair, with $A^\wedge$ the $I$-adic completion of $A$. Assume $(A,I)$ is the Henselization of a pair $(B,J)$ for $B$ a Noetherian $G$-ring.

Then given $f_1, \ldots , f_ m \in A[x_1, \ldots , x_ n]$ and $\widehat{a}_1, \dots , \widehat{a}_ n \in A^\wedge$ such that  $f_ j(\widehat{a}_1, \dots , \widehat{a}_ n) = 0$ for all $j$, there exists for every $N \ge 1$ elements $a_1,\dots,a_n \in A$ such that $\widehat{a_i}-a_i \in I^N$ and such that $f_j(a_1,\dots,a_n)=0$ for all $j$. 
\end{theorem}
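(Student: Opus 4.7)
The plan is to combine Popescu's Theorem~\ref{thm:pop} with the lifting property for smooth algebras over a Henselian pair. First, I would observe that since $(A,I)$ is the Henselization of a Noetherian $G$-ring pair $(B,J)$, the ring $A$ is itself a Noetherian $G$-ring (Henselization preserves this property), so the completion map $A \to A^\wedge$ is regular. Popescu's theorem then presents $A^\wedge$ as a filtered colimit $A^\wedge = \colim_\lambda C_\lambda$ of smooth $A$-algebras. The given vanishing $f_j(\widehat a_1,\ldots,\widehat a_n) = 0$ translates to an $A$-algebra map $\varphi\colon R := A[x_1,\ldots,x_n]/(f_1,\ldots,f_m) \to A^\wedge$, and since $R$ is finitely presented over $A$, $\varphi$ factors through some stage $C_\lambda$: we obtain $\tau\colon R \to C_\lambda$ and $\iota_\lambda\colon C_\lambda \to A^\wedge$ with $\iota_\lambda \circ \tau = \varphi$.

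Using the Noetherian identification $A^\wedge/I^N A^\wedge = A/I^N$, reducing $\iota_\lambda$ modulo $I^N$ gives an $A$-algebra map $\bar s_N\colon C_\lambda \to A/I^N$. The central task is to lift $\bar s_N$ to an $A$-algebra section $s\colon C_\lambda \to A$ with $s \equiv \bar s_N \pmod{I^N}$. The basic Henselian property lifts sections of smooth $A$-algebras only modulo $I$; to strengthen the approximation to modulo $I^N$, one first lifts $\bar s_N \bmod I$ to some $s_0\colon C_\lambda \to A$ by the Henselian property, and then uses smoothness of $C_\lambda$ inductively along the tower $A/I \twoheadleftarrow A/I^2 \twoheadleftarrow \cdots \twoheadleftarrow A/I^N$, adjusting at each stage by the formal smoothness lifting criterion applied to the square-zero extensions $A/I^{k+1} \twoheadrightarrow A/I^k$. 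This strengthened Henselian lifting statement for smooth algebras is \spcite{0ALJ}{Lemma}, which I would invoke rather than reprove in detail.

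With such an $s$ in hand, setting $a_i := s(\tau(x_i)) \in A$ produces the desired approximation: for each $j$ we have $f_j(a_1,\ldots,a_n) = s(\tau(f_j(x_1,\ldots,x_n))) = s(0) = 0$, while $a_i - \widehat a_i \in I^N A^\wedge \cap A = I^N$ by the Noetherianness of $A$. The main obstacle is the strengthened lifting step: passing from a section modulo $I^N$ to an actual section matching modulo $I^N$ goes genuinely beyond the bare Henselian lifting modulo $I$, and uses the smoothness of $C_\lambda$ in an essential way through the infinitesimal lifting criterion. Once that lifting step is granted, the rest of the argument is purely formal: Popescu's theorem does the heavy lifting of reducing a finitely presented problem over $A^\wedge$ to a smooth problem over $A$, and Henselianness plus smoothness does the rest.
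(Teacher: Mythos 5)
The paper does not actually prove this theorem: it is quoted from the Stacks Project and cited as \spcite{0AH5}{Lemma}, explicitly ``stated here without proof.'' Your argument is essentially the standard proof of that lemma, and it mirrors the technique the paper itself uses later in Lemma~\ref{lem:cplt} and Remark~\ref{rmk:cplt}: the Henselization of a Noetherian $G$-ring pair is again a Noetherian $G$-ring (the paper cites \spcite{0AH3}{Lemma} for this in Lemma~\ref{lem:noethg}), hence $A \to A^\wedge$ is regular (\spcite{0AH2}{Lemma}); Popescu writes $A^\wedge$ as a filtered colimit of smooth $A$-algebras; the finitely presented solution algebra $R=A[x_1,\dots,x_n]/(f_1,\dots,f_m)$ factors through a smooth stage $C_\lambda$; and Henselianness plus smoothness of $C_\lambda$ produces a section $C_\lambda \to A$ compatible with $C_\lambda \to A^\wedge \to A/I^N$, which gives the $a_i$. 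So the skeleton is correct, and it is in fact more self-contained than the paper, which simply defers to the reference.

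One step of your sketch is weaker than you present it. If you modify a ring map $s_k\colon C_\lambda \to A$ by a derivation valued in $I^k$, the result is multiplicative only modulo $I^{2k}$, so the inductive ``adjustment'' along the square-zero extensions $A/I^{k+1} \twoheadrightarrow A/I^k$, as literally described, only produces a compatible system of maps $C_\lambda \to A/I^k$ (equivalently a map to the completion), not an honest map $C_\lambda \to A$ agreeing with $\bar s_N$ modulo $I^N$. The clean and standard fix is to note that $(A,I^N)$ is itself a Henselian pair, since Henselianness of a pair depends only on the radical of the ideal and $\sqrt{I^N}=\sqrt{I}$; then you apply the lifting property for sections of smooth algebras over a Henselian pair directly to $(A,I^N)$ --- for instance via \spcite{07M7}{Lemma} followed by a Henselian section of the resulting \Et cover, exactly as in the paper's proof of Lemma~\ref{lem:cplt} --- and no induction is needed. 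You should also double-check that the tag you invoke for the ``strengthened Henselian lifting'' really states the mod-$I^N$ version; if it only gives lifting modulo $I$, the radical observation above supplies the missing strength. Finally, note that the congruence in the conclusion should be read as $\widehat{a}_i - a_i \in I^N A^\wedge$ (your ``$\cap\, A = I^N$'' step is not literally meaningful since $\widehat{a}_i - a_i$ need not lie in $A$), which is what the statement requires. With these adjustments your proof is complete.
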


We can now identify the Henselization with the ``algebraic'' subring of the completion, in the case of a Noetherian domain and $G$-ring.

\begin{lemma}\label{lem:hensalgcplt} Let $B$ be a Noetherian domain and $G$-ring, and $J \subset B$ an ideal. Let $B^h$ be the Henselization of $B$ along $J$, and $B^\wedge$ the $J$-adic completion of $B$. Then the map $B^h \to B^\wedge$ is injective, and an element $f \in B^\wedge$ satisfies some nonzero polynomial in $B[x]$ if and only if $f$ lies in the image of $B^h$. 
\end{lemma}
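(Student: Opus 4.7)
The plan is to establish three items: injectivity of $B^h \to B^\wedge$, the easy direction (elements of $B^h$ are algebraic over $B$), and the hard direction (algebraic elements of $B^\wedge$ lie in $B^h$), with the last being the main work.

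For injectivity, the Henselization $B^h$ is Noetherian (as Henselization of a Noetherian pair), and $B \to B^h$ induces isomorphisms $B/J^n \simto B^h/J^n B^h$ for all $n$, so $(B^h)^\wedge = B^\wedge$. Applying \spcite{0AGV}{Lemma} to the Noetherian Henselian pair $(B^h, JB^h)$ then yields that $B^h \to B^\wedge$ is faithfully flat, hence injective.

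For the easy direction, write $B^h = \colim_\alpha C_\alpha$ as a filtered colimit of \Et $B$-algebras. Any $f \in B^h$ lies in some $C_\alpha$, which is $B$-flat and thus $B$-torsion-free (as $B$ is a domain). Base-changing to $K = {\rm Frac}(B)$, the ring $C_\alpha \tens_B K$ is \Et over the field $K$, hence a finite product of finite separable extensions of $K$; so $f \tens 1$ satisfies a nonzero polynomial over $K$. Clearing denominators and using $B$-torsion-freeness of $C_\alpha$ promotes this to a nonzero $P \in B[x]$ with $P(f) = 0$ in $C_\alpha$.

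For the hard direction, given $f \in B^\wedge$ with $P(f) = 0$ for some nonzero $P \in B[x]$, I would replace $P$ by the $K$-minimal polynomial of $f$ scaled to lie in $B[x]$ (with the purely inseparable characteristic-$p$ case handled separately, using that $p^n$-th roots of a given element are unique in the reduced ring $B^h$). In the $K$-separable case, a B\'ezout identity $UP + VP' = d$ in $B[x]$ with $0 \ne d \in B$ gives $V(f) P'(f) = d$ in $B^\wedge$. Applying Artin-Popescu approximation (Theorem~\ref{thm:artpop}) to $P(x) = 0$ produces, for each $N \ge 1$, an $a_N \in B^h$ with $P(a_N) = 0$ and $a_N - f \in J^N B^\wedge$. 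Writing $P(a_N) - P(f) = (a_N - f) G(a_N, f)$ with $G \in B[x, y]$ and $G(f,f) = P'(f)$, I would expand $G(a_N, f) = P'(f) + (a_N - f) H(a_N, f)$ and multiply by $V(f)$ to deduce $(a_N - f) d \in J^{2N} B^\wedge$. Since $d$ is a non-zero-divisor in the Noetherian ring $B^\wedge$, Artin-Rees gives $a_N - f \in J^{2N - c} B^\wedge$ for some constant $c$ independent of $N$; iterating this bootstrap and invoking Krull's intersection $\bigcap_N J^N B^\wedge = 0$ (valid since $JB^\wedge$ lies in the Jacobson radical of $B^\wedge$) forces $a_N = f$ for $N$ large, and hence $f \in B^h$.

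The main obstacle is this final bootstrapping step: Artin-Popescu alone only furnishes $J$-adic approximations to $f$, and squeezing them into exact equality requires combining the B\'ezout relation supplied by separability with Artin-Rees in $B^\wedge$ and Krull's intersection theorem. The purely inseparable characteristic-$p$ case is a subtle auxiliary point, handled using the reducedness of $B^h$ (which holds because it is a filtered colimit of \Et extensions of the reduced ring $B$).
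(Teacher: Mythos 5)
Your injectivity argument and the easy direction are fine, and your separable-case bootstrap is correct: with $P(f)=P(a_N)=0$, the B\'ezout relation $UP+VP'=d$ gives $(a_N-f)d\in J^{2N}B^\wedge$, and since $d$ is a non-zero-divisor in $B^\wedge$ (flatness of $B\to B^\wedge$ over the domain $B$), Artin--Rees plus Krull's intersection theorem does force $a_N=f$ for large $N$. This quantitative Newton-type ending is genuinely different from the paper's, which is softer: there one keeps the \emph{original} nonzero polynomial $q$, notes that $B^h\tens_B \mathrm{Frac}(B)$ is a finite product of fields \spcite{0AH1}{Lemma} and $B^h$ injects into it, so $q$ has only finitely many roots in $B^h$; the Artin--Popescu approximants $g_N$ then admit a constant subsequence $g_0$, and $f-g_0\in\bigcap_N J^NB^\wedge=0$. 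In particular the paper needs no separability, no minimal polynomial, and no Artin--Rees.

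That is exactly where your proposal has a genuine gap: the reduction to a separable annihilating polynomial is not justified. To know that the monic generator $m$ of $\{Q\in K[x]:Q(f)=0\}$ is squarefree --- which is what you need for $\gcd(m,m')=1$ and hence for the B\'ezout identity with $d\ne 0$ --- you need $K[f]\subset B^\wedge\tens_B K$ to be reduced, i.e.\ you need $B^\wedge$ reduced; this is true, but only via a further use of the $G$-ring hypothesis ($B\to B^\wedge$ is regular, so reducedness passes to $B^\wedge$), which you never invoke. Moreover, in characteristic $p$ a squarefree $m$ can still have inseparable irreducible factors, and your one-line treatment does not handle this: uniqueness of $p^n$-th roots must be used in $B^\wedge$ (reduced, again by the $G$-ring hypothesis), not merely in $B^h$, and before uniqueness can be applied you must first \emph{produce} an element of $B^h$ whose $p^n$-th power equals the relevant separable element --- which requires another application of Artin--Popescu approximation over the Henselian pair $(B^h,JB^h)$ (using that $B^h$ is again a Noetherian $G$-ring), or else the observation that geometric reducedness of the generic formal fiber rules out inseparable factors altogether. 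These repairs are available, but as written the characteristic-$p$ case is a real hole, and it is precisely the complication the paper's finitely-many-roots pigeonhole argument sidesteps.
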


\begin{proof}
The map $B^h \to B^\wedge$ is faithfully flat, therefore it is injective, so $B^h \subseteq B^\wedge$. Elements of $B^h$ are clearly algebraic over $B$. 

Now we assume that we have an element $f \in  B^\wedge$ and a nonzero polynomial $q(x) \in B[x]$ such that $q(f)=0$. Then since $B$ is a $G$-ring, by Theorem~\ref{thm:artpop} for any integer $N \ge 1$ we can find $g_N \in B^h$ with $f-g_N \in J^NB^\wedge$ and $q(g_N)=0$.  This gives us a sequence $\{g_N\}_{N=1}^\infty$ of elements of $B^h$ converging $I$-adically to $f$ in $B^\wedge$.

Since $B$ is a Noetherian domain, it injects into its fraction field $L$. The tensor product $B^h \tens_B L$ is also a finite product of fields $\prod_i L_i$ by \spcite{0AH1}{Lemma}, and $B^h \to B^h \tens_B L$ is injective because $B \to B^h$ is flat. 

There are only finitely many roots of $q$ in $\prod_i L_i$, so there are only finitely many roots of $q$ in $B^h \hookrightarrow \prod_i L_i$. Therefore we have a subsequence $\{g_{N_r}\}_{r=1}^\infty$ of  $\{g_N\}_{N=1}^\infty$ which are all equal to a single element $g_0 \in B^h$ with $q(g_0)=0$, so $f = g_0 \in B^h$ as we desired to show.\end{proof}

Using Lemma~\ref{lem:hensalgcplt}, we will be able to reduce our computations with the \v{Cech} complex above to the following lemma:

\begin{lemma}\label{lem:alglseries} Let $A$ be a Noetherian ring  such that $A$ has characteristic $p > 0$, and $I \subset A$ an ideal. Then for $f \in (A[t,1/t])^\wedge$ (the $I$-adic completion) which is algebraic over $A[t,1/t]$ we can write $f=f_++f_-$ such that $f_+ \in (A[t])^\wedge, f_- \in (A[1/t])^\wedge$ with $f_+$ algebraic over $A[t]$ and $f_-$ algebraic over $A[1/t]$. 
\end{lemma}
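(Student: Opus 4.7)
The plan is to take the canonical Laurent decomposition $f = f_+ + f_-$ at the level of the completion and then use the Frobenius freshman's dream in characteristic $p$ to extract algebraic relations for the two summands. Write $f = \sum_{n \in \mathbf{Z}} a_n t^n$ with $a_n \in A$ satisfying the $I$-adic decay condition; set $f_+ := \sum_{n \ge 0} a_n t^n \in (A[t])^\wedge$ and $f_- := \sum_{n < 0} a_n t^n \in (A[1/t])^\wedge$. The decay condition transfers to each summand, and $f = f_+ + f_-$. What remains is to show $f_+$ is algebraic over $A[t]$; the argument for $f_-$ is symmetric under the involution $t \leftrightarrow 1/t$, which interchanges the roles of the two summands and of the base rings $A[t]$ and $A[1/t]$.

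The characteristic $p$ hypothesis enters through the Freshman's Dream: for every $k \ge 0$ one has $f^{p^k} = (f_+ + f_-)^{p^k} = f_+^{p^k} + f_-^{p^k}$, with $f_+^{p^k} \in (A[t])^\wedge$ supported in $t$-degrees $\{np^k : n \ge 0\}$ and $f_-^{p^k}$ supported in $t$-degrees $\{np^k : n \le -1\} \subseteq (-\infty, -p^k]$. In particular there are no ``cross terms'' $f_+^a f_-^b$ with $a, b > 0$ in the Frobenius iterate, so the Laurent splitting of $f^{p^k}$ is exactly $(f_+^{p^k}, f_-^{p^k})$. Equivalently, the ``take the non-negative Laurent part'' projection $\Pi: (A[t,1/t])^\wedge \to (A[t])^\wedge$ commutes with the Frobenius endomorphism $\phi$.

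To produce an explicit polynomial relation for $f_+$, invoke the hypothesis that $f$ is algebraic: fix a nonzero $F(x) = \sum_i c_i(t) x^i \in A[t,1/t][x]$ with $F(f) = 0$, and clear $t$-denominators to assume $c_i \in A[t]$. Since $f$ has bounded algebraic degree, the iterated Frobenius powers $1, f^p, f^{p^2}, \ldots$ lie in a finite-rank submodule and hence satisfy an $A[t,1/t]$-linear dependence, which after clearing denominators by a power of $t$ has the form $P_0(t) + \sum_{k \ge 1} \alpha_k(t) f^{p^k} = 0$ with $P_0, \alpha_k \in A[t]$. Substituting $f^{p^k} = f_+^{p^k} + f_-^{p^k}$ produces the identity $\sum_k \alpha_k f_+^{p^k} + P_0 = -\sum_k \alpha_k f_-^{p^k}$ in $(A[t,1/t])^\wedge$; the left side lies in $(A[t])^\wedge$, so the strictly negative Laurent part of the right side must vanish identically. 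By passing to a sufficiently high Frobenius iterate of this dependence relation (replacing each $\alpha_k$ by $\alpha_k^{p^N}$ and each exponent $p^k$ by $p^{k+N}$), one can make the negative exponents on $f_-$ dominate the $t$-degrees of the coefficients enough to collect the resulting equation into a genuine polynomial relation in $A[t][x]$ satisfied by $f_+$ alone.

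The main obstacle is the bookkeeping needed to control the ``spillover'' of $\alpha_k f_-^{p^k}$ into non-negative Laurent degrees: the Frobenius-equivariance of $\Pi$ gives cleanness of the splitting of $f^{p^k}$ itself, but not automatically of the coefficient-weighted sum, and matching positive and negative Laurent components requires tracking all contributions carefully. This is precisely where the characteristic $p$ hypothesis is indispensable: de Jong's counterexample (Remark~\ref{rmk:whyhetalechar0}) shows that the analogous Laurent decomposition of algebraic elements fails outright in characteristic zero, so no proof that does not seriously exploit the Frobenius can succeed.
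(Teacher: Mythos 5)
Your proposal follows the paper's strategy almost step for step: the canonical splitting $f=f_++f_-$, the Freshman's Dream giving $f^{p^k}=f_+^{p^k}+f_-^{p^k}$ with no cross terms, a linear dependence $P_0+\sum_{k\ge 1}\alpha_k f^{p^k}=0$ among Frobenius powers with coefficients cleared into $A[t]$ (the paper gets this by noting the span of the powers of $f$ over the fraction field of $A[t]$ is finite-dimensional, so the $f^{p^i}$ are dependent there, then clearing denominators --- your ``finite-rank submodule'' phrasing needs the same passage to the fraction field, i.e.\ effectively the domain case in which the lemma is applied), and finally the comparison of Laurent parts in the identity $\sum_k\alpha_k f_+^{p^k}+P_0=-\sum_k\alpha_k f_-^{p^k}$.

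The gap is in how you finish. The ``spillover'' of $\alpha_k f_-^{p^k}$ into non-negative degrees is not an obstacle, and the fix you propose does not work: raising the dependence relation to the $p^N$-th power multiplies both $\deg\alpha_k$ and the exponent $p^k$ by the same factor $p^N$, so the comparison between coefficient degrees and the negative exponents of $f_-^{p^{k+N}}$ is unchanged; if $\deg\alpha_k\ge p^k$ for some $k$, no choice of $N$ makes the negative exponents ``dominate''. But no domination is needed. Since the $\alpha_k\in A[t]$ have degrees bounded by $D:=\max_k\deg\alpha_k$ and $f_-$ involves only strictly negative powers of $t$, the right-hand side $-\sum_k\alpha_k f_-^{p^k}$ is supported in $t$-degrees at most $D$; and since the left-hand side lies in $(A[t])^\wedge$, its strictly negative part vanishes (as you observed). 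Hence the common value of the two sides is supported in degrees between $0$ and $D$, i.e.\ it is a polynomial $Q\in A[t]$, and $P_0-Q+\sum_{k\ge 1}\alpha_k f_+^{p^k}=0$ is the desired relation, nonzero provided the dependence is taken with some $\alpha_k\neq 0$ for $k\ge 1$ (the paper arranges the top coefficient $Q_r\neq 0$). This is exactly how the paper concludes, phrased there as: for $m$ large the coefficient of $t^m$ in $\sum_i Q_i f_-^{p^i}$ is zero, hence so is that of $\sum_i Q_i f_+^{p^i}$, so the latter is a polynomial. So the idea is right, but the step you flag as ``the main obstacle'' is where your argument is incomplete, and the Frobenius-iterate device you offer in its place would not close it.
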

\begin{proof}  This is shown by de Jong in the Stacks Project Blog post at \cite{seriesblog}; we provide the proof here for the reader's convenience.

The element $f$ is a two-sided power series in $t$ with coefficients going $I$-adically to $0$ as the exponent goes to $\pm \infty$. We separate $f$ into $f_++f_-$ with $f_+$ a power series in $t$ and $f_-$ a power series in $1/t=t^{-1}$. It suffices to show that $f_+$ is algebraic over $A[t]$, by the symmetry of $t$ and $1/t$. 

 We can assume the constant term of $f$ is part of $f_+$. If $f$ is algebraic over $A[t,1/t]$, we have a relation $\sum_{i=0}^m P_if^i=0$ with $P_i \in A[t,1/t], m > 0, P_m \ne 0$. By multiplying by an appropriate power of $t$ we can assume that the $P_i$ are in $A[t]$.

Let $A(t)$ denote the fraction field of $A[t]$. We see by the above that the $A(t)$-span of the powers of $f$ is finite-dimensional. Therefore the $A(t)$-span of $\{f^{p^i}\}_{i=0}^\infty$ is also finite-dimensional, meaning we have a relation $\sum_{i=0}^r Q_if^{p^i}$ with $Q_i \in A(t), r>0, Q_r \ne 0$. By clearing denominators we can assume that the $Q_i$ are polynomials in $t$. 

Now because $p=0 \in A$ we know that $\left(\sum_{i=0}^r Q_if_+^{p^i}\right)+\left(\sum_{i=0}^r Q_if_-^{p^i}\right) = 0.$ We can consider the coefficients of large positive powers of $t$ on the left side of this equation; in particular, for very large $m$ we know that the coefficient of $t^m$ in $\sum_{i=0}^r Q_if_-^{p^i}$ is $0$ by our choice of $f_-$. Hence in order for this equation to hold, for $m$ sufficiently large, the coefficient of $t^m$ in $\sum_{i=0}^r Q_if_+^{p^i}$ must also be $0$. 

Therefore $\sum_{i=0}^r Q_if_+^{p^i}$ is actually a polynomial in $t$. Let $\sum_{i=0}^r Q_if_+^{p^i}=:Q \in A[t]$. As we assumed $Q_r \ne 0$, we see that $Q-\sum_{i=0}^r Q_if_+^{p^i}=0$ is a nonzero polynomial relation, so $f_+$ is algebraic over $A[t]$ as desired.\end{proof}

We can now compute $\H^1(P^h,\O_{P^h})$  for $P=\P_A^1$ using the \v{Cech} complex. By Corollary~\ref{cor:hetzarcmpsn}, from now on we can work with the Zariski topology for the purposes of GHGA comparison, and we will do so for the remainder of this section.

\begin{lemma}\label{lem:h1gen} Let $(A,I)$ be a Noetherian Henselian pair such that $A$ has characteristic $p > 0$.  Let $P=\P_A^1$ be the projective line over $A$, with $P^h$ its $I$-adic Henselization. Then $\O_P$ satisfies \shfcm{P}.
\end{lemma}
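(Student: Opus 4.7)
The plan is to apply Corollary~\ref{cor:hetzarcmpsn} to reduce the \hetale comparison for $\O_P$ to the Zariski comparison $\H^j(P, \O_P) \to \H^j(P^h, \O_{P^h})$ being an isomorphism for every $j \ge 0$. The degree-zero case is Proposition~\ref{prop:h0p1}. Using the two-term \v{C}ech complex for the standard cover $\{U = \spec(A[t]),\, V = \spec(A[1/t])\}$ and its Henselization (which is acyclic for $\O_{P^h}$ by Theorem~\ref{thm:thmbpos}), the cohomologies in degrees $\ge 2$ vanish on both sides. Since $\H^1(P, \O_P) = 0$, the remaining content is the vanishing of
\[
\H^1(P^h, \O_{P^h}) \;=\; \operatorname{coker}\!\bigl(A\{t\} \oplus A\{1/t\} \xrightarrow{(f, g) \mapsto f - g} A\{t, 1/t\}\bigr),
\]
which I will prove by exhibiting a concrete splitting on each element.

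Given $h \in A\{t, 1/t\}$, I would view it inside the $I$-adic completion $(A[t, 1/t])^\wedge$ via the injection noted in Proposition~\ref{prop:h0p1} and split it according to the exponent of $t$ as $h = h_+ + h_-$, where $h_+ \in (A[t])^\wedge$ collects the nonnegative powers of $t$ and $h_- \in (A[1/t])^\wedge$ collects the strictly negative powers. Once $h_+ \in A\{t\}$ and $h_- \in A\{1/t\}$ are established, the identity $h = h_+ - (-h_-)$ exhibits $h$ in the image of the \v{C}ech map. Because $h$ lies in some \'etale extension of $A[t, 1/t]$, it satisfies a nonzero polynomial relation over $A[t, 1/t]$; Lemma~\ref{lem:alglseries}, whose proof uses the characteristic $p$ hypothesis via a Frobenius-doubling argument, then yields that $h_+$ is algebraic over $A[t]$ and $h_-$ is algebraic over $A[1/t]$. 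The final step is to identify these algebraic elements of the completion with the Henselization by invoking Lemma~\ref{lem:hensalgcplt}, which would conclude $h_+ \in A\{t\}$ and $h_- \in A\{1/t\}$.

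Because Lemma~\ref{lem:hensalgcplt} requires the base to be a Noetherian domain that is a $G$-ring, while $A$ is only assumed Noetherian Henselian of characteristic $p$, I would interpose two reduction steps. First, since $A\{t, 1/t\}$ is a filtered colimit of \'etale $A[t, 1/t]$-algebras, I would descend $h$ and the polynomial witnessing its algebraicity to a finitely generated $\Fp$-subalgebra $A_0 \subset A$; as $A_0$ is excellent, its Henselization along $I \cap A_0$ is Noetherian and a $G$-ring, and the \v{C}ech surjectivity problem is compatible with the flat base change $A_0^h \to A$, so we may assume $A$ is itself the Henselization of a finitely generated $\Fp$-algebra. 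Second, a descent through the (finitely many) minimal primes of this $A$ reduces to the case where $A$ is a domain, so that $A[t]$ and $A[1/t]$ are Noetherian domains and $G$-rings and the hypotheses of Lemma~\ref{lem:hensalgcplt} are met.

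The main obstacle is this second reduction: passage modulo a minimal prime loses nilpotent information, and reassembling the splitting of $h$ over $A$ from splittings modulo each minimal prime requires either a careful lifting argument along the nilradical or, perhaps more cleanly, a mild strengthening of Lemma~\ref{lem:hensalgcplt} to rings with only finitely many minimal primes via the total ring of fractions in place of the fraction field. Once the base is well-behaved, the combination of Lemmas~\ref{lem:alglseries} and~\ref{lem:hensalgcplt} closes out the argument in essentially one step.
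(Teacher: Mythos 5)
Your reduction to the vanishing of $\H^1(P^h,\O_{P^h})$, the handling of degrees $0$ and $\ge 2$, the use of Lemma~\ref{lem:alglseries} together with Lemma~\ref{lem:hensalgcplt} in the domain $G$-ring case, and the first reduction (writing $A$ as a filtered colimit of Henselizations $A_\lambda^h$ of finitely generated $\Fp$-subalgebras, so that the \v{C}ech surjectivity can be checked at a finite stage) all agree with the paper's proof. The genuine gap is precisely the step you flag yourself: the passage from the Henselization of a finitely generated $\Fp$-algebra to a domain, and neither of your proposed repairs works as stated. Splittings of $h$ modulo the finitely many minimal primes need not reassemble: even when $A$ is reduced the map $A \to \prod_i A/\mathfrak{p}_i$ is not surjective, so componentwise splittings (each only determined up to elements coming from $\mathfrak{p}_i$) are a priori incompatible and do not glue to a splitting over $A$, and nilpotents make the lifting problem strictly worse. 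The suggested strengthening of Lemma~\ref{lem:hensalgcplt} via the total ring of fractions also breaks down: its proof needs the witness polynomial $q$ to have only finitely many roots in $B^h \otimes_B Q(B)$, and over a non-domain a nonzero $q$ can have all coefficients in a minimal prime (or nilpotent coefficients), hence become zero in a factor of $Q(B)$ and acquire infinitely many roots there. Relatedly, your assertion that any element of an \'etale extension of $A[t,1/t]$ satisfies a nonzero polynomial relation is only clear when $A[t,1/t]$ is a domain, so even the algebraicity input you descend is not available before the reduction you have not yet made.

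The paper closes this case by going up rather than down. After the colimit reduction, each $A_\lambda^h$ is exhibited as a quotient of $A_\lambda'$, the Henselization of a polynomial ring $\Fp[t_1,\dots,t_r]$, which is Noetherian, normal, reduced and a $G$-ring, hence a finite product of normal domains; the domain case applies componentwise to give the result over $A_\lambda'$. One then descends along the closed immersion $P_\lambda = \P^1_{A_\lambda^h} \hookrightarrow P_\lambda' = \P^1_{A_\lambda'}$, which is finite, by pushing $\O_{P_\lambda}$ forward and applying Lemma~\ref{lem:pipush}; since that pushforward is a coherent sheaf rather than the structure sheaf, this step also requires Proposition~\ref{prop:p1red} to upgrade comparison over $A_\lambda'$ from $\O_{P_\lambda'}$ to all coherent sheaves. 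This ``present the base as a quotient of a normal reduced $G$-ring and push forward along the resulting closed immersion of projective lines'' maneuver is the idea missing from your outline; replacing your minimal-prime descent with it completes the argument, while the rest of your proposal can stand as written.
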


We will prove the lemma by a series of reductions, beginning with the case that \begin{enumerate}[(a)] \item $A$ is a Noetherian domain and $G$-ring, then \item assuming $A$ is a Noetherian normal reduced $G$-ring and reducing to case (a), and finally \item reducing the case of a general Noetherian Henselian pair to case (b). \end{enumerate}

\begin{proof}By Proposition~\ref{prop:h0p1} we have an isomorphism $\H^0_{\et}(P,\O_P) \simeq \H^0_{\het}(P^h,\O_{P^h})$. We also have isomorphisms $\H^j_{\et}(P,\O_P) \simeq \H^j_{\het}(P^h,\O_{P^h}) = 0 $ for $j \ge 2$ by Lemma~\ref{lem:thmbhet} and \spcite{03F7}{Lemma} (see the discussion in the proof of Proposition~\ref{prop:p1red}). 

Therefore in order to show that $\O_P$ satisfies \shfcm{P}, 
 it suffices to show that $\H^1_{\et}(P,\O_P) \simeq \H^1_{\het}(P^h,\O_{P^h})$ via the natural map. By Corollary~\ref{cor:hetzarcmpsn}, in fact it is enough to show that the {\it Zariski} comparison map $\H^1(P,\O_P) \to \H^1(P^h,\O_{P^h})$ is an isomorphism, or equivalently that $\H^1(P^h,\O_{P^h})=0$, in order to prove that $\O_P$ satisfies \shfcm{P}. 
 
 Using \spcite{01ET}{Lemma} and applying Theorem~\ref{thm:thmbpos}, we know we can compute $\H^1(P^h,\O_{P^h})$ with \v{C}ech cohomology associated to the Henselization of the affine cover of $P$ given by the two affine opens $U = \spec(A[t])$ and $V=\spec(A[1/t])$, with intersection $U\, \cap \,V = \spec(A[t,1/t])$. Thus, $\H^1(P,\O_P) \to \H^1(P^h,\O_{P^h})$ is an isomorphism if and only if the map $A\{t\} \times A\{1/t\} \to A\{t,1/t\}$ given by $(f,g) \mapsto  f-g$ is surjective.\footnote{As in the proof of Proposition~\ref{prop:h0p1}, we write $A\{t\}$ for the $I$-adic Henselization $(A[t])^h$ of the polynomial ring $A[t]$, and similarly $A\{1/t\}=(A[1/t])^h, A\{t,1/t\}=(A[t,1/t])^h$ for the $I$-adic Henselizations of $A[1/t],A[t,1/t]$.}

 \begin{enumerate}[(a)] \item We assume that $A$ is a Noetherian $\mathbf{F}_p$-algebra domain and $G$-ring. 

Since $A$ is a Noetherian domain and $G$-ring, the same is true for $A[t]$ and $A[t,1/t]$ by \spcite{07PV}{Proposition}. Therefore we can apply Lemma~\ref{lem:hensalgcplt} to $B=A[t],A[1/t]$ and $A[t,1/t]$. We then see that showing that the map $A\{t\} \times A\{1/t\} \to A\{t,1/t\}$ is surjective amounts to showing that for $f \in (A[t,1/t])^\wedge$ (the $I$-adic completion) which is algebraic over $A[t,1/t]$, we can write $f$ as a sum $f=f_++f_-$ such that $f_+ \in (A[t])^\wedge, f_- \in (A[1/t])^\wedge$ with $f_+$ algebraic over $A[t]$ and $f_-$ algebraic over $A[1/t]$. This can be done by Lemma~\ref{lem:alglseries}, so $\H^1(P^h,\O_{P^h})=0$. It follows that $\O_P$ satisfies \shfcm{P} for $A$ which is a domain and $G$-ring, as discussed above.

\item Now assume that $A$ is a Noetherian normal reduced $G$-ring. It follows that $A$ is a finite product of normal domains $\prod_i A_i$ by \spcite{030C}{Lemma}. Therefore $\spec(A)$ is the disjoint union of the $\spec(A_i)$, and $P$ is the disjoint union of $\P_{A_i}^1=:P_i$. These decompositions into components carry over to the Henselizations and $P^h$ is the disjoint union of $P_i^h$. Thus $\H^1(P^h,\O_{P^h})=\bigoplus_i \H^1(P_i^h,\O_{P_i^h})$.

The $A_i$ are clearly Noetherian domains  and $G$-rings,  
so by the previous case (a), each $\H^1(P_i^h,\O_{P_i^h})$ is $0$, so $\H^1(P^h,\O_{P^h})=0$ and $\O_P$ satisfies \shfcm{P} in this case as well.
 
 \item Finally we treat the general Noetherian case. By hypothesis, $A$ is an $\mathbf{F}_p$-algebra. We can write $A$ as the filtered direct limit of its finitely generated subalgebras $A = \colim A_\lambda$. Let $I_\lambda = I \cap A_\lambda$, and $A_\lambda^h$ be the Henselization of $A_\lambda$ along $I_\lambda$. We see that $A=\colim A_\lambda^h, I = \colim I_\lambda^h$ by \spcite{0A04}{Lemma}.

Since $A_\lambda$ for each $\lambda$ is a finitely generated $\mathbf{F}_p$-algebra, we see that $A_\lambda^h$ is the quotient of the Henselization (along some ideal) of a polynomial ring $\mathbf{F}_p[t_1,\dots,t_r]$. Note that the Henselization of $\mathbf{F}_p[t_1,\dots,t_r]$ is a Noetherian normal reduced $G$-ring. (See \cite[\href{https://stacks.math.columbia.edu/tag/033B}{Lemma 033B}, \href{https://stacks.math.columbia.edu/tag/0AH3}{Lemma 0AH3}, \href{https://stacks.math.columbia.edu/tag/033C}{Lemma 033C}, \href{https://stacks.math.columbia.edu/tag/037D}{Lemma 037D}, \href{https://stacks.math.columbia.edu/tag/0AGV}{Lemma 0AGV}]{stackp}.)

Therefore each pair $(A_\lambda^h,I_\lambda^h)$ is the quotient of some Henselian pair $(A_\lambda',I_\lambda')$ with $A_\lambda'$ a Noetherian normal reduced $G$-ring. Let $\smash{P_\lambda:=\P_{A_\lambda^h}^1, P_\lambda' := \P_{A_\lambda'}^1}$. 

By the previous case (b), for all $\lambda$ the  structure sheaf $\O_{P_\lambda'}$ satisfies \shfcm{P_\lambda'}; then by Proposition~\ref{prop:p1red}, in fact $P_\lambda'$ satisfies \cmpsn. 

Because $\spec(A_\lambda^h) \hookrightarrow \spec(A_\lambda')$ is a closed immersion, so is $\smash{P_\lambda\hookrightarrow P_\lambda'.}$ Closed immersions are finite, so we can apply Lemma~\ref{lem:pipush}---the pushforward of $\O_{P_\lambda}$ is coherent, so $\O_{P_\lambda}$ satisfies \shfcm{P_\lambda}. 

Therefore for all $\lambda$ the map $A_\lambda^h\{t\} \times A_\lambda^h\{1/t\}  \to A_\lambda^h\{t,1/t\} $ is surjective. Since Henselization commutes with direct limits \spcite{0A04}{Lemma}, we see that $\colim A_\lambda^h\{t\} = A\{t\}$. Similarly, we see that  $\colim A_\lambda^h\{1/t\} = A\{1/t\}$ and  $\colim A_\lambda^h\{t,1/t\} = A\{t,1/t\}$.

Then $A\{t\} \times A\{1/t\} \to A\{t,1/t\}$ is the colimit of surjective maps, so it is surjective. Therefore $\H^1(P^h,\O_{P^h})=0$.\end{enumerate}
Thus for $(A,I)$ a general Noetherian Henselian pair in characteristic $p>0$, we have shown that $\H^1(P^h,\O_{P^h})=0$, so $\O_P$ satisfies \shfcm{P} as we desired to show.  \end{proof}



%
%
%

%

\begin{theorem}\label{thm:poscharprop} Let $(A,I)$ be a Noetherian Henselian pair such that $A$ has characteristic $p > 0$, and let $X \to \spec(A)$ be a proper $A$-scheme. Then $X$ satisfies \cmpsn.
\end{theorem}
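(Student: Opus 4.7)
The plan is simply to assemble the reductions and base case from the preceding sections. The final output of Section~\ref{sec:propred}, namely Theorem~\ref{thm:propred}, reduces \cmpsn for an arbitrary proper $A$-scheme to verifying \cmpsn for $\P_{B^h}^1$ as $B$ ranges over essentially finite-type $A$-algebras, with $B^h$ denoting the $I$-adic Henselization. So the entire task is to check this one hypothesis in the present positive-characteristic setting.

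First I would observe that, because $A$ is a Noetherian $\Fp$-algebra, every essentially finite-type $A$-algebra $B$ is again a Noetherian $\Fp$-algebra, and the Henselization $B^h$ (along $IB$) is Noetherian since Noetherianness is preserved under Henselization of pairs. Thus $(B^h, IB^h)$ is a Noetherian Henselian pair with $B^h$ of characteristic $p$, so Lemma~\ref{lem:h1gen} applies and yields that $\O_{\P_{B^h}^1}$ satisfies \shfcm{\P_{B^h}^1}. Next, Proposition~\ref{prop:p1red}, whose hypotheses on the base pair are exactly these, promotes this from the structure sheaf alone to every coherent sheaf on $\P_{B^h}^1$; this is precisely the statement that $\P_{B^h}^1$ satisfies \cmpsn.

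Having verified the hypothesis of Theorem~\ref{thm:propred} for every such $B$, the conclusion is immediate: every proper $A$-scheme $X$ satisfies \cmpsn. The theorem is therefore a pure assembly argument rather than a fresh computation, and no substantive obstacle remains at this stage. All of the real difficulty---the reduction of proper schemes to projective schemes via Chow's lemma and Noetherian induction, the finite flat cover $(\P^1)^{\times d} \to \P^d$ handing projective $d$-space in terms of a product of projective lines, the relative $\P^1$ step controlling higher direct images after Henselization, the passage from $\O_{\P^1}$ to arbitrary coherent sheaves on $\P^1$, and the Artin--Popescu-based computation of $\H^1(P^h, \O_{P^h})$ via the identification of Henselizations with algebraic subrings of completions---has already been absorbed into Theorem~\ref{thm:propred}, Proposition~\ref{prop:p1red}, and Lemma~\ref{lem:h1gen}, with the $\Fp$-hypothesis used exactly where Corollary~\ref{cor:hetzarcmpsn} converts between the Zariski and \hetale comparison problems.
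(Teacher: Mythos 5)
Your proposal is correct and matches the paper's own proof, which likewise deduces the theorem by verifying the hypothesis of Theorem~\ref{thm:propred} via Lemma~\ref{lem:h1gen} and Proposition~\ref{prop:p1red}, noting that $B$ and $B^h$ remain Noetherian of characteristic $p$ for $B$ essentially finite-type over $A$. No gaps; your write-up just spells out the assembly in slightly more detail than the paper does.
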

\begin{proof}
This follows from Theorem~\ref{thm:propred}, Proposition~\ref{prop:p1red}, and Lemma~\ref{lem:h1gen}, because if $B$ is an $A$-algebra with $A \to B$ essentially finite-type, then $B$ and $B^h$ have positive characteristic if $A$ does. \end{proof}

\subsection{Non-Noetherian comparison}\label{sec:nonnoeth}

In this section we will extend Theorem~\ref{thm:poscharprop} to the case of a base ring which is not necessarily Noetherian, but still has characteristic $p > 0$. (Recall that, via Corollary~\ref{cor:hetzarcmpsn}, we will now work with Zariski cohomologies for the purpose of GHGA comparison.)

\thmposcharnonnoeth

In the Appendix it is shown in Lemmas~\ref{lem:seplimcohom}~and~\ref{lem:henslimcohom} that limits of schemes or of Henselian schemes are compatible with cohomology in order to reduce the non-Noetherian case to the Noetherian one. We will use these lemmas to prove Theorem~\ref{thm:poscharnonnoeth}.

\begin{proof}[Proof of Theorem~\ref{thm:poscharnonnoeth}]
Fix $X \to \spec(A)$ proper and $\F$ a quasi-coherent sheaf on $X$. We will show that $\F$ satisfies \shfcm{X}.

For $X \to \spec(A)$ proper, we can find a closed immersion $\iota: X \hookrightarrow X'$ with $X'$ proper and finitely presented over $A$. (See, for example, \spcite{09ZR}{Lemma}). 

Now by Lemma~\ref{lem:pipush}, we may reduce to the situation of $X \to \spec(A)$ both proper and finitely presented. By \spcite{01PJ}{Lemma}, $\F$ can be written as the filtered colimit of finitely presented, quasi-coherent $\O_X$-modules, so we can reduce to the case of $\F$ finitely presented using \spcite{01FF}{Lemma}. 

Write $A$ as the filtered direct limit of its subalgebras $A_i$ which are finitely generated over the finite field $\mathbf{F}_p$. Set $I_i=I \cap A_i$. Then clearly $\colim I_i=I$. Hence by the universal property of Henselization, $A=\colim A_i^h,I=\colim I_i^h$ for $(A_i^h,I_i^h)$ the Henselization of the pair $(A_i,I_i)$ as Henselization commutes with filtered colimits. Furthermore the $A_i^h$ are Noetherian by \spcite{0AGV}{Lemma}.

Since we are now assuming that $X$ is finitely presented over $A$, there exists $i_0$ and a finitely presented $A_{i_0}^h$-scheme $X_{i_0}$ such that $X=X_{i_0} \tens_{A_{i_0}^h} A$ by \spcite{01ZM}{Lemma}. We may assume that $X_{i_0}$ is proper over $A_{i_0}^h$ by \spcite{081F}{Lemma}. 

Furthermore, as $\F$ is finitely presented, we can increase $i_0$ as necessary so that there exists a coherent sheaf $\F_{i_0}$ on $X_{i_0}$ so that $\F=\F_{i_0} \times_{X_{i_0}} X$ by \spcite{01ZR}{Lemma}.

Setting $X_i=X_{i_0} \tens_{A_{i_0}^h} A_i^h$ for $A_{i_0} \subset A_i$ and defining $\F_i$ similarly, if we let $Y_i$ be the closed subscheme of $X_i$ corresponding to $I_i^h$, we see that we are in the situations of Lemmas~\ref{lem:seplimcohom}~and~\ref{lem:henslimcohom}. Therefore we see that $\colim_i \H^j(X_i,\F_i) \simeq \H^j(X,\F)$ and $\colim_i \H^j(X_i^h,\F_i^h) \simeq \H^j(X^h,\F^h)$ for all $j \ge 0$. Because each $A_i^h$ is a Noetherian $\mathbf{F}_p$-algebra, we know that the natural map $\H^j(X_i,\F_i) \to \H^j(X_i^h,\F_i^h)$ is an isomorphism for all $i$ and all $j \ge 0$ by Theorem~\ref{thm:poscharprop}, so $\F$ also satisfies \shfcm{X} as we desired to show.\end{proof}

As mentioned in Remark~\ref{rmk:relp1hypo}, we will generalize the notion of ``relative comparison'' from Proposition~\ref{prop:relp1} to the non-Noetherian setting and general proper maps in the following theorem.

\begin{theorem}\label{thm:nonnoethrelcmpsn} Let $(A,I)$ be a Henselian pair such that $A$ has characteristic $p > 0$, and $f: Y \to X$ a proper map of $A$-schemes.

Assume that any quasi-coherent sheaf on $X$ satisfies \shfcm{X} and that $X$ is finitely presented over $\spec(A)$. Then any quasi-coherent sheaf on $Y$ satisfies \shfcm{Y}. 
\end{theorem}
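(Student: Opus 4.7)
The plan is to combine the Noetherian-approximation philosophy of \thmab~\ref{thm:poscharnonnoeth} with the relative Leray spectral sequence argument of \propab~\ref{prop:relp1}, invoking the full strength of \thmab~\ref{thm:poscharnonnoeth} on each geometric fiber of $f$.

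\emph{Paragraph 1 (reductions).} First, I would reduce to the case of finitely presented $\F$: since $f$ is proper and $X$ is quasi-compact and quasi-separated (being finitely presented over the affine base $\spec(A)$), $Y$ is qcqs, so cohomology on $Y$ commutes with filtered colimits of quasi-coherent sheaves (\spcite{01FF}{Lemma}), and the analogue on $Y^h$ follows from the Appendix limit lemmas used in the proof of \thmab~\ref{thm:poscharnonnoeth}. Writing $\F$ as a filtered colimit of finitely presented quasi-coherent subsheaves via \spcite{01PJ}{Lemma} reduces to the finitely presented case. Next, using \spcite{09ZR}{Lemma}, I embed $Y$ via a closed immersion $\iota\colon Y \hookrightarrow Y'$ into a proper finitely presented $X$-scheme $Y'$, and \lemab~\ref{lem:pipush} lets me replace $(Y,\F)$ by $(Y',\iota_*\F)$. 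Hence I may assume that $f\colon Y \to X$ is proper and finitely presented, so that $Y$ is finitely presented over $\spec(A)$.

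\emph{Paragraph 2 (base change isomorphism, the main obstacle).} The principal technical step is to show that the base change map
\[(R^j f_{\et,*}\F_\et)^h \;\longrightarrow\; R^j f^h_{\het,*}\F^h\]
of sheaves on $(X^h)_\het$ is an isomorphism. Following the stalk computation of \propab~\ref{prop:genhighdirim}, I would verify this on stalks at geometric points $\ol{x}^h$ of $X^h$: both sides identify with colimits over affine \'{e}tale neighborhoods $(U=\spec(B),\ol u) \to (X,\ol x)$ of $\H^j_\et(U\times_X Y,\F)$ and $\H^j_\het(U^h \times_{X^h} Y^h,\F^h)$, respectively. For each such $U = \spec(B)$, the base change $U \times_X Y$ is proper and finitely presented over $\spec(B)$, so $(U\times_X Y)\times_{\spec(B)}\spec(B^h)$ is proper over the Henselian pair $(B^h, IB^h)$, which inherits characteristic $p$ from $A$. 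Applying the non-Noetherian Henselian GAGA \thmab~\ref{thm:poscharnonnoeth} to this proper $B^h$-scheme (together with \corab~\ref{cor:hetzarcmpsn} to translate between Zariski and \hetale comparison) yields the comparison isomorphism at each level, and passage to the colimit gives the desired stalk isomorphism. This is the hard part, since unlike \propab~\ref{prop:genhighdirim} the map $f$ has no product structure to exploit; one must instead run \thmab~\ref{thm:poscharnonnoeth} fiber-by-fiber along the étale neighborhoods.

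\emph{Paragraph 3 (Leray conclusion).} With the base change in hand, I would conclude via the Leray spectral sequences $\H^i_\et(X,R^j f_* \F) \Rightarrow \H^{i+j}_\et(Y,\F)$ and $\H^i_\het(X^h, R^j f^h_* \F^h) \Rightarrow \H^{i+j}_\het(Y^h,\F^h)$, related by a morphism compatible with pullback to the Henselization (constructed as in the proof of \propab~\ref{prop:relp1}). The second-sheet map factors as
\[\H^i_\et(X, R^j f_* \F) \;\longrightarrow\; \H^i_\het(X^h, (R^j f_* \F)^h) \;\longrightarrow\; \H^i_\het(X^h, R^j f^h_* \F^h),\]
where the first arrow is an isomorphism by the hypothesis that every quasi-coherent sheaf on $X$ satisfies \shfcm{X} (applied to the quasi-coherent sheaf $R^j f_* \F$, which is quasi-coherent by properness, hence qcqs-ness, of $f$), and the second arrow is an isomorphism by the base change established in Paragraph~2. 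Since the morphism of spectral sequences is an isomorphism on $E_2$, it is an isomorphism on the abutments, which is exactly the statement that $\F$ satisfies \shfcm{Y}.
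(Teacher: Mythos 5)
Your proposal is correct, but it routes the key middle step differently from the paper. Both arguments share the same skeleton (reduce to $f$ proper and finitely presented and $\F$ finitely presented via \spcite{09ZR}{Lemma}, Lemma~\ref{lem:pipush} and filtered colimits; prove the base change isomorphism $(R^jf_{\et,*}\F_\et)^h \to R^jf^h_{\het,*}(\F^h)$; conclude with the Leray spectral sequence, applying the hypothesis on $X$ to the quasi-coherent sheaves $R^jf_*\F$). The difference is how the base change isomorphism is obtained: you run the stalkwise argument of Proposition~\ref{prop:genhighdirim} directly over the non-Noetherian base, invoking the full non-Noetherian Theorem~\ref{thm:poscharnonnoeth} (plus Corollary~\ref{cor:hetzarcmpsn}) on each proper $B^h$-scheme $(U\times_X Y)_{B^h}$ attached to an affine \Et neighborhood $U=\spec(B)$, whereas the paper first descends $f$ and $\F$ by Noetherian approximation to a pair $(A_{i_0}^h,I_{i_0}^h)$ with $A_{i_0}^h$ Noetherian, applies Lemma~\ref{lem:henshighdirim2} there (which only needs the Noetherian Theorem~\ref{thm:poscharprop}), and then passes to the limit of the higher direct images. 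Your route is legitimate---Theorem~\ref{thm:poscharnonnoeth} is established before this theorem, it has no finiteness hypotheses on the base pair, and the identifications used in the stalk computation (quasi-coherence of $R^jf_{\et,*}\F_\et$ in place of coherence, $U^h\times_{X^h}Y^h \simeq ((U\times_X Y)_{B^h})^h$, cofinality of Henselized \Et neighborhoods) do not use Noetherian hypotheses---and it buys a cleaner argument with no descent bookkeeping for $f$, $\F$ and their higher direct images. What the paper's detour buys is that all passage-to-the-limit statements for \Et cohomology are applied to systems of Noetherian schemes, where the cited results of \cite{leifu} literally apply; in your version the schemes $U\times_X Y$ are generally non-Noetherian, so you should cite the general qcqs limit theorems (e.g.\ SGA~4 VII~5.7 or the Stacks Project) instead. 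Two small repairs: the reduction to finitely presented $\F$ on the \hetale side is not covered by the Appendix lemmas (which concern limits of schemes, not filtered colimits of sheaves on a fixed Henselian scheme); use the equivalence of $(Y^h)_\het$ with $(Y_0)_\et$ and commutation of \Et cohomology on a qcqs scheme with filtered colimits of abelian sheaves, exactly as implicitly done in the proofs of Theorems~\ref{thm:poscharnonnoeth} and~\ref{thm:nonnoethrelcmpsn}. Also, when comparing the two colimits at a stalk, make explicit (as in Proposition~\ref{prop:genhighdirim}) that $\colim_U \H^j_\et(U\times_X Y,\F)$ and $\colim_U \H^j_\et((U\times_X Y)_{B^h},\F_{B^h})$ agree because $\colim B=\colim B^h=\O_{X,x}^{\rm sh}$, so both compute $\H^j_\et(Y\times X_{(\ol{x})},\F)$.
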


To prove Theorem~\ref{thm:nonnoethrelcmpsn}, we will generalize Proposition~\ref{prop:genhighdirim} 
to the case of a general proper morphism of finite-type schemes over a Noetherian base. 

\begin{lemma}\label{lem:henshighdirim2}
Let $(A,I)$ be a Noetherian Henselian pair such that $A$ has characteristic $p > 0$. Let $X$ be a finite-type $A$-scheme and $f: Y \to X$ be a proper morphism of schemes. For $\F$ a coherent sheaf on $Y$ and any $j \ge 0$, the map $(R^jf_{\et,*}\F_{\et})^h \to R^jf^h_{\het,*}(\F^h)$ of sheaves on $(X^h)_{\het}$ arising from the base change map of Lemma~\ref{lem:basechangemap} is an isomorphism.\end{lemma}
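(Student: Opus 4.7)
The plan is to mirror the stalk-by-stalk computation in the proof of Proposition~\ref{prop:genhighdirim}, substituting Theorem~\ref{thm:poscharprop} for the relative-comparison hypothesis made there. Since $f$ is proper, $X$ is locally Noetherian, and $\F$ is coherent, both $(R^jf_{\et,*}\F_\et)^h$ and $R^jf^h_{\het,*}(\F^h)$ are coherent $\O_{(X^h)_\het}$-modules, so it suffices to check that the base change map is an isomorphism on every stalk at a geometric point $\ol{x}^h$ of $X^h$ lying over $x \in X$. By Lemma~\ref{lem:stalkhet} the structure sheaf stalks at $\ol{x}^h$ and at the corresponding geometric point $\ol{x}$ of $X$ are canonically identified with $(\O_{X,x})^{\rm sh}$.

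The first stalk, by \cite[Corollary 5.9.5]{leifu}, equals
\[
((R^jf_{\et,*}\F_\et)^h)_{\ol{x}^h} \;=\; \colim_{(U=\spec(B),\,\ol{u})} \H^j_\et(U \times_X Y,\F) \;=\; \H^j_\et\bigl(Y \times_X X_{(\ol{x})},\,\F\bigr),
\]
with $X_{(\ol{x})} := \spec((\O_{X,x})^{\rm sh})$ and the colimit over affine \Et neighborhoods. For the second stalk, cofinality of the Henselizations $U^h = \sph(B^h)$ among \hetale neighborhoods of $\ol{x}^h$ (via the site equivalence of Proposition~\ref{prop:globcatequiv}) gives
\[
(R^jf^h_{\het,*}(\F^h))_{\ol{x}^h} \;=\; \colim_U \H^j_\het\bigl(U^h \times_{X^h} Y^h,\,\F^h\bigr).
\]
For each $U = \spec(B)$, the $A$-algebra $B$ is Noetherian (as $X$ is finite-type over the Noetherian $A$) and of characteristic $p$, so its $I$-adic Henselization $B^h$ is a Noetherian $\Fp$-algebra (\spcite{0AGV}{Lemma}) and $(B^h,IB^h)$ is a Noetherian Henselian pair of positive characteristic. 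The base change $Y_{U^h} := Y \times_X \spec(B^h) = (U \times_X Y)_{B^h}$ is proper over $\spec(B^h)$, and $U^h \times_{X^h} Y^h$ is canonically its $IB^h$-adic Henselization. Theorem~\ref{thm:poscharprop} then gives \cmpsn for $Y_{U^h}$, so applying \shfcm{Y_{U^h}} to the coherent sheaf $\F|_{Y_{U^h}}$ yields a natural isomorphism $\H^j_\het(U^h \times_{X^h} Y^h,\F^h) \simeq \H^j_\et(Y_{U^h},\F|_{Y_{U^h}})$. Passing to the colimit over $U$ and invoking \cite[Corollary 5.9.4]{leifu} (using $\colim_U B^h = (\O_{X,x})^{\rm sh}$) identifies the second stalk with $\H^j_\et(Y \times_X X_{(\ol{x})},\F)$, matching the first.

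The main obstacle is bookkeeping: one must verify that all of these identifications---the two limit formulas from \cite{leifu}, the comparison isomorphism of Theorem~\ref{thm:poscharprop}, and the identity $U^h \times_{X^h} Y^h = (Y_{U^h})^h$---are compatible with the canonical base change map of Lemma~\ref{lem:basechangemap}, so that the natural map on stalks really is realized as the isomorphism above and not merely as an abstract bijection. This is a faithful adaptation of the corresponding check at the end of the proof of Proposition~\ref{prop:genhighdirim}, since nothing in that argument used the product structure $Y = X \times_A S$ except at the single step where the relative comparison hypothesis was invoked, now supplied (without any product hypothesis) by Theorem~\ref{thm:poscharprop}.
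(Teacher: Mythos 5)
Your proposal is correct and is essentially the paper's own proof: the paper likewise reduces to the stalk computation of Proposition~\ref{prop:genhighdirim}, noting that the only change is to replace the relative-comparison hypothesis by applying Theorem~\ref{thm:poscharprop} (with \spcite{0AGV}{Lemma} to see $B^h$ is a Noetherian $\Fp$-algebra) to the proper $B^h$-scheme $(U\times_X Y)_{B^h}$, whose Henselization is $U^h\times_{X^h}Y^h$. The compatibility bookkeeping you flag is exactly what the paper handles by saying the rest is "precisely as" in Proposition~\ref{prop:genhighdirim}, so there is no gap.
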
 
\begin{proof} Since $f$ is proper and $X$ is locally Noetherian, the higher direct image sheaves $R^jf_{\et,*}\F_\et$ are coherent on $X$.
Therefore the pullback $(R^jf_{\et,*}\F_{\et})^h$ to $(X^h)_\het$ is a coherent sheaf of $\O_{(X^h)_{\het}}$-modules. 

We proceed as in the proof of Proposition~\ref{prop:genhighdirim} to show the base change map is an isomorphism by checking on stalks;  
fix a point $x \in X^h$, and choose a geometric point $\ol{x}$ of $X$ lying over $x$, with $\ol{x}^h$ the corresponding geometric point of $X^h$. Recall the equations computing the stalks:\vspace{-.25in}

\begin{align}
\tag{\ref{eqn:stalk1}}
((R^jf_{\et,*}\F_{\et})^h)_{\ol{x}^h} &=  \colim_{(U,\ol{u})} \H^j_{\et}(U \times_X Y,\F),\\
\tag{\ref{eqn:stalk2}}
(R^jf^h_{\het,*}(\F^h)_{\het})_{\ol{x}^h} &= \colim_{(U,\ol{u})} \H^j_{\het}(U^h \times_{X^h} Y^h,\F^h),
\end{align} 

where $(U,\ol{u}) \to (X,\ol{x})$ are affine \Et neighborhoods. The remainder of the argument to show that both stalks can be identified with  $\H^j_{\et}(Y \times X_{(\ol{x})},\F|_{Y \times X_{(\ol{x})}})$ via the natural maps is almost exactly as in the proof of Proposition~\ref{prop:genhighdirim}. The only difference in the proof is how we show that for $U=\spec(B), U^h=\sph(B^h)$, the comparison map $\H^j_{\et}((U \times_X Y)_{B^h},\F_{B^h}) \to \H^j_{\het}(U^h \times_{X^h} Y^h,\F^h)$ is an isomorphism, which we now describe.

%

Note that the $B^h$-scheme $U':=(U \times_X Y)_{B^h}$ is proper over $B^h$ since $f: Y \to X$ is proper. Furthermore we see that $U^h \times_{X^h} Y^h$ is isomorphic to the $I$-adic Henselization $(U')^h$.

Because $A$ is an $\Fp$-algebra and  $B$ is a finite-type $A$-algebra, both $B,B^h$ are Noetherian $\Fp$-algebras \spcite{0AGV}{Lemma}. Therefore the proper $B^h$-scheme $U'$ satisfies \cmpsn by Theorem~\ref{thm:poscharprop}; it follows from Corollary~\ref{cor:hetzarcmpsn} that the comparison map $$\H^j_{\et}((U \times_X Y)_{B^h},\F_{B^h}) =\H^j_{\et}(U',\F_{U'})\to \H^j_{\het}((U')^h,\F^h)=\H^j_{\het}(U^h \times_{X^h} Y^h,\F^h)$$ is an isomorphism.

The rest is precisely as the proof of Proposition~\ref{prop:genhighdirim}. \end{proof}

We can now prove Theorem~\ref{thm:nonnoethrelcmpsn}. 

\begin{proof}[Proof of Theorem~\ref{thm:nonnoethrelcmpsn}]
Fix a quasi-coherent sheaf $\F$ on $Y$ and let $Y^h$ be the $I$-adic Henselization of $Y$ (which is a Henselian scheme over $\sph(A)$). We will show that the canonical \hetale cohomology comparison map $\H^n_{\et}(Y,\F) \to \H^n_{\het}(Y^h,\F^h)$ is an isomorphism for all $n$, so $\F$ satisfies \shfcm{Y}. 

Since $f: Y \to X$ is proper, we can find a closed immersion $\iota: Y \hookrightarrow Y'$ with $Y'$ proper and finitely presented over $X$. (See, for example, \spcite{09ZR}{Lemma}). 

Now by Lemma~\ref{lem:pipush}, $\F$ satisfies \shfcm{Y} if and only if $\iota_*\F$ satisfies \shfcm{Y'}. Thus we may reduce to the situation where $f$ is both proper and finitely presented. By \spcite{01PJ}{Lemma}, $\F$ can be written as the filtered colimit of finitely presented, quasi-coherent $\O_Y$-modules, so we can reduce to the case of $\F$ finitely presented using \spcite{01FF}{Lemma}. 

As in the proof of Theorem~\ref{thm:poscharnonnoeth}, we can write the pair $(A,I)$ as the filtered direct limit of $(A_i, I_i:=I \cap A_i)$ for the finitely generated $\Fp$-subalgebras $A_i$ of $A$. In fact $A=\colim A_i^h,I =\colim I_i^h$ for $(A_i^h,I_i^h)$ the Henselization of the pair $(A_i,I_i)$, and the $A_i^h$ are Noetherian by \spcite{0AGV}{Lemma}.

Since we are now assuming that $f: Y \to X$ is a morphism of finitely presented $A$-schemes, there exists $i_0$ and a morphism of finitely presented $A_{i_0}^h$-schemes $f_{i_0}: Y_{i_0} \to X_{i_0}$ such that $X=X_{i_0} \tens_{A_{i_0}^h} A, Y=Y_{i_0} \tens_{A_{i_0}^h} A$ and $f$ is the base change of $f_{i_0}$ by \spcite{01ZM}{Lemma}. We may assume that $f_{i_0}$ is proper by \spcite{081F}{Lemma}. 

Furthermore, as $\F$ is finitely presented, we can increase $i_0$ as necessary so that there exists a coherent sheaf $\F_{i_0}$ on $Y_{i_0}$ so that $\F=\F_{i_0} \times_{Y_{i_0}} Y$ by \spcite{01ZR}{Lemma}.

Set $X_i=X_{i_0} \tens_{A_{i_0}^h} A_i^h$ for $A_{i_0} \subset A_i$ and define $Y_i, f_i, \F_i$ similarly. Then for all $i \ge i_0$ and all $j \ge 0$, the map $(R^j(f_i)_{\et,*}(\F_i)_{\et})^h \to R^j(f_i^h)_{\het,*}(\F_i^h)$ of sheaves on $(X_i^h)_{\het}$ arising from the base change map of Lemma~\ref{lem:basechangemap} is an isomorphism by Lemma~\ref{lem:henshighdirim2}. 

Fix $j$ and consider for all $i \ge i_0$ the pullbacks of the sheaves $R^j(f_i)_{\et,*}(\F_i)_{\et}$ from $(X_i)_{\et}$ to $X_\et$. The limit of these pullbacks is the sheaf $R^jf_{\et,*}\F_\et$ by \cite[Corollary 5.9.6]{leifu}. It follows that the limit of the pullbacks of $(R^j(f_i)_{\et,*}(\F_i)_{\et})^h$ to $(X^h)_\het$ is $(R^jf_{\et,*}\F_\et)^h$ (where $X_i^h$ is the Henselization of $X_i$ along the closed subscheme $X_i'$ corresponding to $I_i^h$; consider Remark~\ref{rmk:morhet}). 

A similar argument (again implicitly using the equivalence of $(Y_i^h)_\het$ and $(Y_i')_\et$ for $Y_i'$ the closed subscheme of $Y_i$ corresponding to $I_i^h$ and Remark~\ref{rmk:morhet}) finds that the limit of the pullbacks of $R^j(f_i^h)_{\het,*}(\F_i^h)$ to $X_\et$ is $R^jf^h_{\het,*}(\F^h)$. The maps $(R^j(f_i)_{\et,*}(\F_i)_{\et})^h \to R^j(f_i^h)_{\het,*}(\F_i^h)$ from Lemma~\ref{lem:basechangemap} are compatible with these limits, so we have an isomorphism $(R^jf_{\et,*}\F_\et)^h \to R^jf^h_{\het,*}(\F^h)_\het$ for each $j$. 

To complete the proof, we will compare the Leray spectral sequence $\H^m_{\et}(X,R^nf_*\F) \implies \H^{m+n}_{\et}(Y,\F)$, and the analogous spectral sequence for $f^h$ and $\F^h$. (We use $m,n$ to avoid confusion with the indices $i$ above.)

As in the proof of Proposition~\ref{prop:relp1}, we choose an injective resolution $\mc{I}^\bullet$ of $\F_{\et}$ by $\O_{Y_{\et}}$-modules, with which we can compute the \Et cohomology of $\F$. Similarly, we choose an injective resolution $\mc{J}^\bullet$ of $\F^h_{\het}$ by $\O_{(Y^h)_{\het}}$-modules to compute the \hetale cohomology of $\F^h$. 

We may pull back $\mc{I}^\bullet$ to an exact sequence of $\O_{(Y^h)_{\het}}$-modules (with degree $m$ term $(\mc{I}^m)^h$), which maps to $\mc{J}^\bullet$ since $\mc{J}^\bullet$ is an injective resolution. This gives us a map from the pullback of $\mc{I}^\bullet$ to $(Y^h)_{\het}$ to $\mc{J}^\bullet$, which can be lifted through the steps of constructing the Leray spectral sequences to give us a map of spectral sequences that must respect the filtration on limit terms.

By hypothesis, the map $\H^m_{\et}(X,R^nf_*\F) \to \H^m_{\het}(X^h,(R^nf_*\F)^h)$ is an isomorphism, and we showed above that $(R^nf_{\et,*}\F_\et)^h \to R^nf^h_{\het,*}(\F^h)_\het$ for all $n$. Thus pullback to the Henselization gives an isomorphism on the second sheet
$$\H^m_{\et}(X,R^nf_*\F) \to \H^m_{\het}(X^h,(R^nf_*\F)^h) \simeq \H^m_{\het}(X^h,R^nf^h_*\F^h)$$ for our map of spectral sequences Since this map respects the filtration on limit terms, the comparison map $\H^n_{\et}(Y,\F) \to \H^n_{\het}(Y^h,\F^h)$ is an isomorphism for all $n$. It follows that $\F$ satisfies \shfcm{Y} by Corollary~\ref{cor:hetzarcmpsn}. \end{proof}


\section{Algebraization of coherent sheaves}\label{chap:algzation}

In this section we consider a different GHGA problem: algebraizability of sheaves on a Henselization.

\begin{definition}\label{def:algbility}
For a proper and finitely presented morphism of schemes $X \to \spec(A)$ with $(A,I)$ a Henselian pair and a finitely presented sheaf of $\O_{X^h}$-modules $\G$ (for $X^h$ the $I$-adic Henselization of $X$), we say that $\G$ is {\bf algebraizable} if there exists a finitely presented sheaf $\F$ on $X$ with $\F^h \simeq \G$. 
\end{definition}

We restrict ourselves to the case of a proper scheme in order to leverage the fact that for a proper scheme $X$ over a pair $(A,I)$ such that $A$ is Noetherian and $I$-adically complete, pullback of coherent sheaves to the Henselization is an exact and fully faithful functor (see Proposition~\ref{prop:hensffulexact}). Furthermore, regardless of characteristic, a proper scheme satisfies \cmpsn in degree $0$ under a completeness condition \hfcite{\lemhogen}. We will prove that coherent subsheaves of algebraizable coherent sheaves are algebraizable (Theorem~\ref{thm:subshf}), and deduce the consequences mentioned in Section~\ref{sub:results}.

\begin{enumerate}[(I)]
\item Our proof of algebraizability of \emph{subsheaves} begins by reducing to the case where the base ring $A$ is a Noetherian $G$-ring, using the fact that a Henselian ring $A$ is the filtered colimit of the Henselizations of its finitely generated $\Z$-subalgebras.
\item When the base $A$ is a Noetherian $G$-ring, we can use Popescu's theorem (Theorem~\ref{thm:pop}) to reduce further to the case where the base ring is complete.
\item By leveraging formal GAGA results we can prove Theorem~\ref{thm:subshf} in the case of a complete base ring, from which a Henselian version of Chow's theorem (Corollary~\ref{cor:henschow}) and algebraizability of maps between Henselian schemes or maps of coherent sheaves (Corollaries~\ref{cor:essimg},~\ref{cor:hensmapsalg}) will follow.
\item Finally we use a counterexample of de Jong \cite{cohpropblog} to show that even in the case of the projective line in characteristic 0 (or in mixed characteristic) we already have failures of algebraizability for abstract coherent sheaves on $X^h$ (which are not necessarily subsheaves of some algebraizable sheaf $\F^h$), so algebraizability for all coherent sheaves on a proper and finitely presented $A$-scheme remains uncertain only with a positive characteristic hypothesis.
\end{enumerate}

\subsection{Noetherian $G$-rings}\label{sec:noethg}

We first reduce algebraizability of a general finitely presented sheaf of $\O_{X^h}$-modules to the situation where $A$ is a Noetherian $G$-ring.

\begin{lemma}\label{lem:noethg}
Let $(A,I)$ be a Henselian pair; $X,X'$ and $Y$ proper and finitely presented $A$-schemes with morphisms $X \to X', X^h \to Y^h$ over $\spec(A), \sph(A)$ respectively; and $\G \to \G'$ a morphism of finitely presented sheaves of $\O_{X^h}$-modules. 

Then there exists a Noetherian $G$-ring $A_0 \subseteq A$ and a map of Henselian pairs $(A_0,I_0) \to (A,I)$, $X_0,X_0',Y_0$ proper and finitely presented $A_0$-schemes with morphisms $X_0 \to X_0', (X_0)^h \to (Y_0)^h$ over $\spec(A_0), \sph(A_0)$ respectively, and $\G_0 \to \G_0'$ a morphism of finitely presented sheaves of $\O_{(X_0)^h}$-modules such that: \begin{enumerate}[(i)] \item $X_0 \times_{A_0} A = X, X_0' \times_{A_0} A = X'$, and the map $X \to X'$ arises from the map $X_0 \to X_0'$;
\item $Y_0 \times_{A_0} A=Y$, and the map $X^h \to Y^h$ arises from the map $(X_0)^h \to (Y_0)^h$;
\item the pullback of the map $\G_0\to \G_0'$ along the map $X^h \to X_0^h$ is the map $\G \to \G'$.
 \end{enumerate}
 \end{lemma}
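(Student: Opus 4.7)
The plan is to realize $(A,I)$ as a filtered colimit of Henselizations of pairs whose underlying ring is a Noetherian $G$-ring, and then invoke standard limit-descent results for finitely presented objects, with an added ingredient to descend morphisms of Henselian schemes.

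First I would set up the index system. Let $\Lambda$ be the filtered poset of finitely generated $\mathbf{Z}$-subalgebras $A_\lambda \subseteq A$. Set $I_\lambda := I \cap A_\lambda$ and let $A_\lambda^h$ denote the Henselization of $A_\lambda$ along $I_\lambda$, with $I_\lambda^h := I_\lambda A_\lambda^h$. Since Henselization commutes with filtered colimits of pairs (\spcite{0A04}{Lemma}), we have $(A,I) = \colim_\lambda (A_\lambda^h, I_\lambda^h)$. Each $A_\lambda^h$ is Noetherian by \spcite{0AGV}{Lemma}, and being the Henselization along an ideal of a finitely generated $\mathbf{Z}$-algebra it is a $G$-ring (the same string of Stacks Project citations used at the end of the proof of Lemma~\ref{lem:h1gen} applies verbatim).

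Next I would descend the scheme-theoretic data. By the standard limit results \spcite{01ZM}{Lemma}, \spcite{081F}{Lemma}, \spcite{01ZR}{Lemma}, after enlarging $\lambda$ once we may descend the three finitely presented proper $A$-schemes $X,X',Y$ to finitely presented proper $A_{\lambda_0}^h$-schemes $X_{\lambda_0},X_{\lambda_0}',Y_{\lambda_0}$ together with the morphism $X_{\lambda_0}\to X_{\lambda_0}'$; setting $A_0 := A_{\lambda_0}^h$ and $I_0 := I_{\lambda_0}^h$ gives item~(i). The morphism of coherent modules $\mathcal{G} \to \mathcal{G}'$ on $X^h$ is then descended to $\mathcal{G}_0 \to \mathcal{G}_0'$ on $X_0^h$ by applying \spcite{01ZR}{Lemma} on the underlying scheme of $X_0^h$ (equivalent data by Proposition~\ref{prop:globcatequiv} together with the identification of quasi-coherent sheaves on a Henselian scheme with sheaves of modules on its small \hetale site via the equivalence of sites). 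This handles~(iii).

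The main obstacle is item~(ii): descending the morphism $X^h \to Y^h$ of Henselian schemes to a morphism $X_0^h \to Y_0^h$, since the standard Stacks machinery is written for schemes, not Henselian schemes. My approach would be to encode the morphism as its graph $\Gamma \subseteq X^h \times_{\sph(A)} Y^h$, a finitely presented closed Henselian subscheme such that the first projection $\Gamma \to X^h$ is an isomorphism. Using that Henselization commutes with fiber products (so $X^h \times_{\sph(A)} Y^h = (X_0^h \times_{\sph(A_0)} Y_0^h) \times_{\sph(A_0)} \sph(A)$) and that a finitely presented closed Henselian subscheme corresponds under the equivalence of Proposition~\ref{prop:globcatequiv} (applied suitably) to a finitely presented closed subscheme of the underlying scheme, I would appeal to the Henselian ``Nike's trick'' already invoked in Lemma~\ref{lem:subschemelem} (i.e.\ \hfcite{\propab{} 4.2.1}) combined with \spcite{01ZM}{Lemma} to descend $\Gamma$ to a finitely presented closed Henselian subscheme $\Gamma_0 \subseteq X_0^h \times_{\sph(A_0)} Y_0^h$. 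After possibly enlarging $\lambda_0$ once more so that the projection $\Gamma_0 \to X_0^h$ is an isomorphism (which holds after base change to $A$, hence at some finite stage by another application of the limit lemmas), $\Gamma_0$ is the graph of the required morphism $X_0^h \to Y_0^h$. Finally, compatibility between all the chosen descents is arranged by taking $\lambda_0$ large enough, giving the desired data simultaneously.
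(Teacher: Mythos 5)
Your overall strategy (write $(A,I)=\colim(A_\lambda^h,I_\lambda^h)$ over the finitely generated $\Z$-subalgebras, descend $X,X',Y$ and the map $X\to X'$ by the standard scheme-limit lemmas, and note $A_\lambda^h$ is a Noetherian $G$-ring) is exactly the paper's, and that part is fine. The gap is in how you handle the two genuinely Henselian descents, items (ii) and (iii). Your key claim --- that via Proposition~\ref{prop:globcatequiv} a finitely presented $\O_{X^h}$-module (resp.\ a finitely presented closed Henselian subscheme of $X^h\times_{\sph(A)}Y^h$) is ``equivalent data'' to a finitely presented quasi-coherent module (resp.\ closed subscheme) on the underlying scheme, so that \spcite{01ZR}{Lemma} and \spcite{01ZM}{Lemma} apply --- is not correct. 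Proposition~\ref{prop:globcatequiv} is an equivalence of \emph{sites}, not of ringed sites: the structure sheaf on $(X^h)_{\het}$ is the Henselized one (its sections over a Henselian affine are an \hfp algebra of the form $A\{X_1,\dots,X_n\}/(f_1,\dots,f_m)$, and its stalks are strict Henselizations of the local rings of $X^h$), whereas the underlying scheme is the closed subscheme cut out by $I$. Finitely presented $\O_{X^h}$-modules and closed Henselian subschemes carry strictly more data than their shadows on the underlying scheme, so the scheme-limit lemmas cannot be invoked through that identification. This is precisely why the paper proves dedicated limit lemmas in the Appendix (Lemmas~\ref{lem:schemelem}, \ref{lem:subschemelem}, \ref{lem:sheaflem}), working Henselian-affine-locally with $A\{X_1,\dots,X_n\}=\colim A_i\{X_1,\dots,X_n\}$ and with module descent over the filtered colimit of \hfp algebras via \spcite{05LI}{Lemma}, plus gluing and a uniqueness claim to make the local descents compatible.

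Your graph trick for (ii) has a second, independent gap even if the descent of $\Gamma$ were granted (e.g.\ by citing Lemma~\ref{lem:subschemelem} directly, which is available): you need the projection $\Gamma_0\to X_0^h$ to become an isomorphism at a finite stage knowing only that it is one after base change to $A$. That is a spreading-out statement for morphisms of \emph{Henselian} schemes; it is not one of the cited scheme-limit lemmas and is not established in the paper, so it would need its own argument (one can sketch it using Noetherianity of the \hfp charts, but as written it is asserted, not proved). The paper avoids all of this by descending the morphism $X^h\to Y^h$ directly with Lemma~\ref{lem:schemelem}: Henselian-affine-locally the map is a map $\varphi\colon B\to C$ of \hfp $A$-algebras, one lifts the relations and the images $\varphi(X_j)$ to a finite level using $A\{X_1,\dots,X_n\}=\colim A_i\{X_1,\dots,X_n\}$ and the universal property of Henselization, and then glues using the uniqueness claim. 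If you replace your reductions for (ii) and (iii) by direct appeals to Lemmas~\ref{lem:schemelem} and~\ref{lem:sheaflem} (or reproduce their hands-on arguments), your proof becomes the paper's.
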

\begin{proof}
As in the proof of Theorem~\ref{thm:poscharnonnoeth}, we can write $A$ as the filtered direct limit of its subalgebras $A_i$ which are finitely generated over $\mathbf{Z}$. Let $I_i=I \cap A_i$. If $(A_i^h,I_i^h)$ is the Henselization of $(A_i,I_i)$ for each $i$, we have $A= \colim A_i^h$. Because $X$ is finitely presented, for some $i_0$ we have $X=X_{i_0} \times_{A_{i_0}^h} A$ for some proper morphism $X_{i_0} \to \spec(A_{i_0}^h)$, and similarly for $X',Y$. 

By \spcite{01ZM}{Lemma}, we can assume that the map $X \to X'$ arises from some map $X_{i_0 } \to X_{i_0}'$ for some index $i_0$. Similarly, using Lemma~\ref{lem:schemelem}, we can assume (possibly after increasing $i_0$) that the map $X^h \to Y^h$ arises from some map $X_{i_0}^h \to Y_{i_0}^h$. Note that $X_{i_0},X_{i_0}',Y_{i_0}$ are proper and finitely presented $A_{i_0}^h$-schemes.

In the same way, we can (again, possibly after further increasing $i_0$) use Lemma~\ref{lem:sheaflem} to find finitely presented $\O_{X_{i_0}^h}$-modules $\G_{i_0}, \G_{i_0}'$  with a map $\G_{i_0} \to \G_{i_0}'$ which pulls back to $\G \to \G'$ via the map $X^h \to X_{i_0}^h$.  

Since $A_{i_0}^h$ has been chosen to be the Henselization of a finitely generated $\Z$-algebra, we see that $A_{i_0}^h$ is a Noetherian $G$-ring by \spcite{0AH3}{Lemma}. Then letting $A_0=A_{i_0}^h,I_0=I_{i_0}^h$, we get the desired result.
\end{proof}

\begin{remark}\label{rmk:noethg} In the setting of Lemma~\ref{lem:noethg}, if $\G_{0}$ is algebraizable---meaning we have a coherent sheaf $\F_{0}$ on $X_{0}$ such that $\F_{0}^h \simeq \G_{0}$---then we see immediately that $\G$ is algebraizable, since $\F^h \simeq \G$ for $\F$ the pullback of $\F_{0}$ along the map $X \to X_{0}$. For similar reasons, if the map $\G_0 \to \G_0'$ arises from a map of coherent sheaves $\F_0 \to \F_0'$ on $X_0$, it is clear that the map $\G \to \G'$ is algebraizable; and if the map $X_0^h \to Y_0^h$ arises from a map $X_0 \to Y_0$ over $A_0$, then the map $X^h \to Y^h$ arises from a map $X \to Y$ over $A^h$.
\end{remark}

Therefore we have reduced the general algebraizability problem for finitely presented proper schemes to the case where the base is a Noetherian $G$-ring. We next reduce to the case where the base ring $A$ is $I$-adically complete.

\subsection{Reduction to complete case}\label{sec:cplt}

\begin{lemma}\label{lem:cplt}
Let $(A,I)$ be a Henselian pair  with $A$ a Noetherian $G$-ring, and let $X$ be a proper $A$-scheme and $\G$ a coherent sheaf on the $I$-adic Henselization $X^h$ of $X$. Let $X'=X \times_A A^\wedge$ and $\G'$ be the pullback of $\G$ along the map $(X')^h \to X^h$, which is a coherent sheaf.

If there exists a coherent sheaf $\F'$ on $X'$ such that $\G' \simeq (\F')^h$---i.e., if $\G'$ is algebraizable---then $\G$ is algebraizable.
\end{lemma}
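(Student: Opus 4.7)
The plan combines Popescu's theorem with Henselian lifting of $A/I$-points of smooth $A$-algebras, in order to avoid any direct construction of an algebraization on $X$. Since $A$ is a Noetherian $G$-ring, the map $A \to A^\wedge$ is regular, and Popescu's theorem (Theorem~\ref{thm:pop}) writes it as a filtered colimit $A^\wedge = \colim_i C_i$ of smooth $A$-algebras $C_i$. Setting $X_i := X \times_A C_i$, we have $X' = \lim_i X_i$ as $A$-schemes, and the standard finite-presentation descent for coherent sheaves on such an affine limit (\spcite{01ZR}{Lemma}) produces, for some index $i_0$, a coherent sheaf $\F_C$ on $X_C := X_{i_0}$ restricting to $\F'$ on $X'$; write $C := C_{i_0}$.

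Next I would descend the given isomorphism $(\F')^h \simeq \G'$ on $(X')^h$ to an isomorphism $\alpha_C \colon (\F_C)^h \simeq \G_C$ on $(X_C)^h$, where $\G_C$ denotes the pullback of $\G$ along the natural map $\pi^h \colon (X_C)^h \to X^h$. This is the step I expect to be the main obstacle: it requires that $\Hom$ between coherent sheaves on the Henselian schemes $(X_i)^h$ be compatible with the filtered colimit $A^\wedge = \colim C_i$. I would verify this by passing to the underlying schemes via Proposition~\ref{prop:globcatequiv}, reducing to the classical limit statement for coherent $\Hom$ on a limit of finite-type schemes, and using that Henselization commutes with filtered colimits of pairs (\spcite{0A04}{Lemma}) together with Lemma~\ref{lem:henslimcohom}. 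After possibly enlarging $i_0$, the desired $\alpha_C$ exists and recovers the original isomorphism after further pullback to $(X')^h$.

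With $\alpha_C$ in hand, I exploit the Henselian property to split $C$ as an $A$-algebra. The composition $C \to A^\wedge \twoheadrightarrow A^\wedge / IA^\wedge = A/I$ defines an $A/I$-point of $\spec(C)$; because $\spec(C) \to \spec(A)$ is smooth and $(A,I)$ is Henselian, this lifts to an $A$-point, giving an $A$-algebra retraction $s \colon C \to A$. Geometrically, $s$ yields a section $\sigma \colon X \to X_C$ of the projection $\pi \colon X_C \to X$, and Henselizing produces a section $\sigma^h \colon X^h \to (X_C)^h$ of $\pi^h$ with $\pi^h \circ \sigma^h = \mathrm{id}_{X^h}$.

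Finally, I would set $\F := \sigma^* \F_C$, a coherent sheaf on $X$, and pull back $\alpha_C$ along $\sigma^h$ to obtain
\[
\F^h \;\simeq\; \sigma^{h,*}\bigl((\F_C)^h\bigr) \;\xrightarrow{\sigma^{h,*}\alpha_C}\; \sigma^{h,*}(\G_C) \;=\; (\pi^h \circ \sigma^h)^* \G \;=\; \G,
\]
so $\F$ algebraizes $\G$, as required.
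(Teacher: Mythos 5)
Your proposal is correct and follows essentially the same route as the paper: Popescu's theorem to descend $\F'$ and the isomorphism $(\F')^h \simeq \G'$ to a smooth $A$-algebra stage $C$, then the Henselian lifting property of smooth algebras to produce a retraction $C \to A$, and finally pullback along the resulting section to algebraize $\G$. The descent-of-the-isomorphism step you flag as the main obstacle is exactly what the paper's Appendix Lemma~\ref{lem:sheaflem} supplies (including preservation of the isomorphism property at a finite stage), and the section $C \to A$ is obtained there via \spcite{07M7}{Lemma} plus the Henselian property, which is the same argument you give in collapsed form.
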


\begin{proof} Since $A$ is a Noetherian $G$-ring, the map $A \to A^\wedge$ from $A$ to its $I$-adic completion is regular by \spcite{0AH2}{Lemma}. We can now leverage Popescu's theorem (Theorem~\ref{thm:pop}) to see that $A \to A^\wedge$ is a filtered colimit of smooth ring maps. Furthermore, the map $A \to A^\wedge$ is faithfully flat by \spcite{0AGV}{Lemma}. Then $X' \to X$ must be flat and surjective since $\spec(A^\wedge) \to \spec(A)$ is.

We then have a commutative cube (\ref{cube1}) below, with left and right faces Cartesian. The zigzag arrows represent pullback of sheaves, such as with the leftmost zigzag arrow showing that $\G'$ is the pullback of $\G$. 

We would like to find a coherent sheaf $\F$ on $X$ such that $\F^h \simeq \G$. In order to do so, we will use Popescu's theorem. Since $A \to A^\wedge$ is a filtered colimit of smooth ring maps, there exists a smooth $A$-algebra $C_0$ such that $\F'$ is the pullback of a coherent sheaf $\F_0$ on the proper \linebreak$C_0$-scheme $X_{C_0}=X \times_A C_0$. Let $\G_0 := \G \times_{X^h} X_{C_0}^h$; we then (possibly replacing $C_0$ by some further smooth $A$-algebra in the colimit yielding $A^\wedge$) can use Lemma~\ref{lem:sheaflem} to arrange that $\F_0^h \simeq \G_0$ descending the isomorphism $(\F')^h \simeq \G'$. 

\begin{equation}\label{cube1}\tag{$\dagger$}
\tikzcdset{arrow style=tikz, diagrams={>={Computer Modern Rightarrow[width=10pt,length=6pt]}}}\begin{tikzcd}
\spec(A) &[-3pt]&[-35pt]& \arrow[lll]\arrow[ddd,leftarrow]\arrow[ddr,leftarrow]\sph(A) &[-3pt]&[-35pt] \\[-5pt]
&&\F&&&\G\arrow[ddd,squiggly,bend left=10]\arrow[lll,leftarrow,"\text{\Large?}" description,squiggly,crossing over,bend right=7]\\[-30pt]
&X\arrow[luu]&&& \arrow[lll,crossing over]\arrow[ddd,leftarrow]X^h&\\\arrow[uuu]\spec(A^\wedge) &&& \arrow[lll]\arrow[rdd,leftarrow]\sph(A^\wedge) && \\[-5pt]
&&\F'&&&\G'\arrow[lll,leftarrow,squiggly,crossing over,bend right=7]\\[-25pt]
& \arrow[uuu,crossing over]X'\arrow[luu] &&& \arrow[lll,crossing over](X')^h&&\\
\end{tikzcd}
\end{equation} \vspace{-.2in}

This gives us a larger commutative diagram (\ref{cube2}) below, again with all left and right faces Cartesian. The arrow from $\F_0$ to $\F$ represents that we will construct $\F$ as a pullback of $\F_0$ along the dashed arrow from $X$ to $X_{C_0}$, which is a section of the morphism $X_{C_0} \to X$. 

\begin{equation}\label{cube2}\tag{$\dagger\dagger$}
\tikzcdset{arrow style=tikz, diagrams={>={Computer Modern Rightarrow[width=10pt,length=6pt]}}}\begin{tikzcd}
\spec(A) &[-3pt]&[-35pt]& \arrow[lll]\arrow[ddd,leftarrow]\arrow[ddr,leftarrow]\sph(A) &[-3pt]&[-35pt] \\[-5pt]
&&\F&&&\G\arrow[dddddd,squiggly,bend left,end anchor={[xshift=.2ex,yshift=-1.2ex]}]\arrow[ddd,squiggly,bend left=10]\arrow[lll,leftarrow,"\text{\Large?}" description,squiggly,crossing over,bend right=7]\\[-20pt]
&X\arrow[luu]&&& \arrow[lll,crossing over]\arrow[ddd,leftarrow]X^h&\\
\arrow[uuu]\spec(C_0) &&& \arrow[lll]\arrow[ddd,leftarrow]\arrow[ddr,leftarrow]\sph(C_0^h) && \\[-5pt]
&&\F_0\arrow[uuu,squiggly,bend right,crossing over]&&&\G_0\arrow[lll,leftarrow,squiggly,crossing over,bend right=7]\arrow[ddd,squiggly,bend left=10]\\[-20pt]
& \arrow[uuu,crossing over]\arrow[uuu,crossing over,leftarrow,bend left,dashed,end anchor={[xshift=-.6ex,yshift=.2ex]}]\arrow[luu]X_{C_0} && &\arrow[lll,crossing over]\arrow[ddd,leftarrow]X_{C_0}^h&\\[-5pt]
\arrow[uuu]\spec(A^\wedge) &&& \arrow[lll]\arrow[rdd,leftarrow]\sph(A^\wedge) && \\
&&\F'\arrow[uuu,squiggly,leftarrow,crossing over,bend right]&&&\G'\arrow[lll,leftarrow,squiggly,crossing over,bend right=7]\\[-20pt]
& \arrow[uuu,crossing over]X'\arrow[luu] &&& \arrow[lll,crossing over](X')^h&&\\
\end{tikzcd}
\end{equation} \vspace{-.2in}

The map $C_0 \to A^\wedge$ gives us a map $C_0 \to A/I$. We can find an \Et $A$-algebra $A'$ with $A'/IA' \simeq A/I$ and a map $C_0 \to A'$ lifting the map $C_0 \to A/I$ by \spcite{07M7}{Lemma}. Because $A$ is Henselian, the map $A \to A'$ has a section lifting the isomorphism $A/I \simeq A'/IA'$, so we have a {\it section} $C_0 \to A$ over $A$ lifting the map $C_0 \to A/I$.

This map $C_0 \to A$ gives us the desired dashed arrow $X \to X_{C_0}$ over $X$. As in (\ref{cube2}), let $\F$ be the pullback of $\F_0$ along this map. Now we wish to show that $\F^h$ is isomorphic to $\G$, which is represented by the zigzag arrow with a question mark in (\ref{cube2}).

We recall that we have an isomorphism $\F_0^h \simto \G_0$, represented by the second horizontal zigzag arrow of (\ref{cube2}). We can pull back this map along the map $X_{C_0}^h \to X^h$,  giving us an isomorphism $\F^h \simto \G$ (here we use that $X^h \to X_{C_0}^h \to X^h$ is the identity map). Hence $\G$ is algebraizable, as we desired to show.\end{proof}

\begin{remark}\label{rmk:cplt} Similarly to Lemma~\ref{lem:noethg}, Lemma~\ref{lem:cplt} can also be used to reduce algebraizability of maps of coherent sheaves or of maps of the Henselizations of proper schemes over a Noetherian $G$-ring Henselian base $A$ to the case of a Noetherian complete base. 

The key point is that by Popescu's theorem, $A \to A^\wedge$ is a filtered colimit of smooth ring maps. Hence for any object(s) over $A^\wedge$ coming from a finite set of finitely presented data, we can find a smooth $A$-algebra $C_0$ with a map $C_0 \to A^\wedge$ such that our objects over $A^\wedge$ are the pullbacks of objects over $C_0$. We can then use the Henselian property of $A$ to get a section $C_0 \to A$ along which we can pullback algebraizations over $A^\wedge$ to algebraizations over $A$.

Although it is not always the case that $A^\wedge$ inherits the Noetherian $G$-ring property from $A$ (see \cite[Section 5]{gringnishi} for a counterexample), we will not need to use the $G$-ring property when proving algebraizability of subsheaves over a complete base $A^\wedge$, instead relying on the completeness property to leverage formal GAGA results.
\end{remark}


\subsection{Subsheaves of algebraizable sheaves}\label{sec:algsub}

In this section we show that subsheaves of algebraizable sheaves are algebraizable. 

\thmsubshf

This is proved by F. Kato in \cite[Theorem 5]{kato17} when $A$ is a valuation ring which is Henselian with respect to the principal ideal $(a)$, for $a$ a nonzero element of the maximal ideal of the valuation ring. Our proof is similar, but does not use the notions of $I$-adically adhesive or $I$-adically universally adhesive which are used in \cite{kato17}.
\begin{proof}
Applying Lemma~\ref{lem:noethg} to the the injective morphism $\G \hookrightarrow \F^h$ of finitely presented $\O_{X^h}$-submodules, we can obtain a map of Henselian pairs $(A_0,I_0) \to (A,I)$ with $A_0$ a Noetherian $G$-ring, as well as \begin{itemize} \item  a proper and finitely presented $A_{0}$-scheme $X_{0}$ for which $X=X_{0} \tens_{A_{0}} A$; \item  a finitely presented sheaf of $\O_{X_{0}}$-modules $\F_{0}$ for which $\F = \F_{0} \times_{X_{0}} X$; \item and a morphism of finitely presented sheaves of $\O_{{X_0}^h}$-modules $\G_{0} \to \F_0^h$ which pulls back to the injective morphism $\G \hookrightarrow \F^h$ by the map $X^h \to X_0^h$. \end{itemize}

Since pullback is right exact, we can replace $\G_{0}$ with its sheafified image in $\F_{0}^h$, as that will also pull back to $\G$. Thus we can identify $\G_{0}$ with a coherent subsheaf of $\F_{0}^h$. 

As in Remark~\ref{rmk:noethg}, if $\G_0$ is algebraizable as the Henselization of some coherent subsheaf of $\F_0$, then $\G$ is also algebraizable (as the Henselization of a coherent subsheaf of $\F$). Thus replacing $A$ with $A_0$, we have reduced to the case where $A$ is a Noetherian $G$-ring. By Lemma~\ref{lem:cplt} and Remark~\ref{rmk:cplt}, we can furthermore assume that $A$ is $I$-adically complete (and we will no longer need to assume that $A$ is a $G$-ring). 

Consider the formal scheme $X^\wedge \to \spf(A)$ which is the $I$-adic completion of $X$, and is proper over $\spf(A)$. The map $X^\wedge \to X$ factors through $X^h$.

 We write $\F^\wedge,\G^\wedge$ for the pullbacks of $\F,\G$ along the maps $X^\wedge \to X, X^\wedge \to X^h$ respectively. Then clearly $\G^\wedge \subset \F^\wedge$, so by formal GAGA \cite[Theorem 5.1.4]{ega31} we have a coherent sheaf $\G_1 \subset \F$ on $X$ such that $\G_1^\wedge =(\G_1^h)^\wedge \simeq \G^\wedge$ as subsheaves of $\F^\wedge$.
 
To show that the finitely presented sheaves $\G_1^h$ and $\G$ are equal as subsheaves of $\F^h$, it suffices to work over Henselian-affine opens of $X^h$ arising from affine opens of $X$. Since $\G_1^h,\G$ are finitely presented, we can consider a new affine setting: $\spec(B^h)$ for $B$ a finite-type $A$-algebra (with $I$-adic Henselization and completion $B^h, B^\wedge$ respectively) and $\F^h=\widetilde{M}$, $\G=\widetilde{N}$, $\G_1^h=\widetilde{N'}$ for $M$ a finitely generated $B^h$-module with finitely generated submodules $N,N'$: we wish to show that if  $N \tens_{B^h} B^\wedge $ and $N' \tens_{B^h} B^\wedge$ are equal as $B^\wedge$-submodules of $M \tens_{B^h} B^\wedge$ then $N$ and $N'$ are equal as $B^h$-submodules of $M$.
 
Because $B$ is finite-type over $A$, which is Noetherian, we see that $B^h \to B^\wedge$ is faithfully flat. Thus, since the inclusions $N \subset N+N', N' \subset N+N'$ inside $M$ become equalities after tensoring with $B^\wedge$, they are equalities of $B^h$-submodules as we desired to show.\end{proof}

Chow's theorem \cite[Theorem V]{chowthm} states that a closed analytic subspace of complex projective space is an algebraic subvariety. We can deduce a Henselian version of this from Theorem~\ref{thm:subshf}.

\corhenschow 

\begin{proof} If $A$ is Noetherian, then the closed immersion $Y \subseteq X^h$ is defined by a finitely presented ideal sheaf inside $\O_{X^h}$. This ideal sheaf is algebraizable by Theorem~\ref{thm:subshf}, so there is a finitely presented ideal of $\O_X$ defining a finitely presented closed subscheme $Z \subseteq X$ with $Z^h = Y$. This completes the proof of the Noetherian case.

If $A$ is not Noetherian, then as in the proof of Theorem~\ref{thm:poscharnonnoeth} and Lemma~\ref{lem:noethg}, we can write $A$ as the filtered direct limit of the Henselizations of its finitely generated $\Z$-subalgebras $A_i$; in other words, if $(A_i^h,I_i^h)$ is the Henselization of $(A_i,I \cap A_i)$ for each $i$, we have $A= \colim A_i^h$. The $A_i^h$ are Noetherian by \spcite{0AGV}{Lemma}.

Hence we can apply Lemma~\ref{lem:subschemelem} to find an index $i_0$ and a finitely presented closed Henselian subscheme $Y_{i_0} \subseteq X_{i_0}^h$ such that $X_{i_0} \otimes_{A_{i_0}^h} A = X, Y_{i_0} \otimes_{A_{i_0}^h} A = Y$. 

Then because $A_{i_0}^h$ is Noetherian \spcite{0AGV}{Lemma} we see that $Y_{i_0} \subseteq X_{i_0}^h$ is algebraizable---meaning there exists a finitely presented closed subscheme $Z_{i_0} \subseteq X_{i_0}$ with $Z_{i_0}^h=Y_{i_0}$. Letting $Z=Z_{i_0} \otimes_{A_{i_0}^h} A \subseteq X$, we see that $Z^h=Y$, algebraizing $Y$. 
\end{proof}

As discussed in Section~\ref{sub:results}, we can also extend \hfcite{\prophensffulexact} (stated here as Proposition~\ref{prop:hensffulexact}), while relaxing its completeness hypothesis:

\coressimg

\begin{proof} We know this functor is exact and faithful for any $A$ (and in fact for the Henselization of any scheme along any closed subscheme) by \hfcite{\lemhensfaithexact}. The essential image is closed under subobjects by Theorem~\ref{thm:subshf}, and it is closed under quotients as well since the functor is exact. 

For fullness, fix some coherent sheaves $\F,\G$ on $X$ and assume we have a morphism $\F^h \to \G^h$ on $X^h$. The graph $\mc{H}$ of this morphism is a subsheaf of $\F^h \oplus \G^h$, so by Theorem~\ref{thm:subshf} we have a subsheaf $\mc{H}_0$ of $\F \oplus \G$ such that $\mc{H}_0^h=\mc{H}$ inside $\F^h \oplus \G^h$. 

We know that the map $$\mc{H} \hookrightarrow \F^h \oplus \G^h \to \F^h$$ is an isomorphism. Consider the map $$\mc{H}_0 \hookrightarrow \F \oplus \G \to \F.$$ Since the Henselization functor is exact, if $\mc{K}=\ker(\mc{H}_0 \to \F)$ we know that $\mc{K}^h$ is the kernel of $\mc{H} \to \F^h$, so $\mc{K}^h$ vanishes. Thus because the Henselization functor is faithful, it follows that $\mc{K}$ also vanishes. Applying a similar argument to the cokernel of $\mc{H}_0 \hookrightarrow \F \oplus \G \to \F$, we see that this map is also an isomorphism. 

Therefore $\mc{H}_0$ defines a morphism of sheaves $\F \to \G$ ``algebraizing'' the morphism $\F^h \to \G^h$. This proves fullness of the functor as well, completing the proof.
\end{proof}

We can now show that maps between the Henselizations of proper and finitely presented $A$-schemes are algebraizable.

\corhensmapsalg

 As with Theorem~\ref{thm:subshf}, F. Kato proves this corollary in \cite[Corollary 6]{kato17} when $A$ is a valuation ring which is Henselian with respect to a principal ideal $(a)$, for $a$ a nonzero element of the maximal ideal of the valuation ring. Our proof is similar, but we will go into more detail.

\begin{proof}By Lemma~\ref{lem:noethg}, for any morphism $X \to Y$ we can find a map of Henselian pairs $(A_0,I_0) \to (A,I)$ with $A_0$ a Noetherian $G$-ring for which $X=X_{0} \tens_{A_0} A, Y=Y_{0} \tens_{A_0} A$ for proper and finitely presented $A_0$-schemes $X_{0},Y_{0}$ such that $X \to Y$ is the base change of a morphism $X_{0} \to Y_{0}$. 

Similarly, for any morphism $X^h \to Y^h$, we can find $A_0,X_{0},Y_{0}$ as above so that $X^h \to Y^h$ is the base change of a morphism $X_{0}^h \to Y_{0}^h$ using Lemma~\ref{lem:schemelem}. This allows us to reduce to the case where $A$ is Noetherian and a $G$-ring. We can further reduce to the case where $A$ is also $I$-adically complete by Lemma~\ref{lem:cplt} and Remark~\ref{rmk:cplt}. 

Let $X^\wedge,Y^\wedge$ be the proper formal schemes over $\spf(A)$ which are the $I$-adic completions of $X,Y$ respectively. By formal GAGA, we have a bijection $$\Hom_{\spec(A)}(X,Y) \to \Hom_{\spf(A)}(X^\wedge,Y^\wedge)$$ between the Hom set of morphisms of $A$-schemes $X \to Y$ and the Hom set of morphisms of $A$-formal schemes $X^\wedge \to Y^\wedge$. This bijection factors through the map $\Hom_{\spec(A)}(X,Y) \to \Hom_{\sph(A)}(X^h,Y^h)$. Hence $\Hom_{\spec(A)}(X,Y) \to \Hom_{\sph(A)}(X^h,Y^h)$ must be injective. To show it is surjective, we have to ``descend'' an arbitrary map $X^h \to Y^h$ over $\sph(A)$ to a map $X \to Y$ over $\spec(A)$.

For a morphism $g: X^h \to Y^h$, we can consider its graph $G \subset X^h \times_{\sph(A)} Y^h=(X \times_{\spec(A)} Y)^h$. By Corollary~\ref{cor:henschow}, we have a closed subscheme $F$ of $X \times Y$ such that $F^h = G$ as closed Henselian subschemes of $X^h \times_{\sph(A)} Y^h$.

To show that $F$ defines the graph of a morphism $X \to Y$, we want the morphism $$F \hookrightarrow X \times_{\spec(A)} Y \to X$$ to be an isomorphism. We already know that the map  $F^h  \hookrightarrow X^h \times_{\sph(A)} Y^h \to X^h$ is equal to $G \to X^h$, which is an isomorphism. Now we will use the following lemma:

\begin{lemma}\label{lem:lastlem} For $(A,I)$ a Noetherian Henselian pair and $f: T \to S$ a morphism of proper $A$-schemes, $f$ is an isomorphism if $f^h$ is.
\end{lemma}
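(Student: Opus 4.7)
The plan is to reduce the statement to the case of an $I$-adically complete base via faithfully flat descent, and then to apply classical formal GAGA. Let $A^{\wedge}$ denote the $I$-adic completion of $A$; since $(A,I)$ is a Noetherian Henselian pair, $I$ is contained in the Jacobson radical of $A$, so the map $A \to A^{\wedge}$ is both flat (as $A$ is Noetherian) and faithfully flat (every maximal ideal of $A^{\wedge}$ contracts to a maximal ideal of $A$ containing $I$). Set $\widehat{T} := T \times_{\spec(A)} \spec(A^{\wedge})$ and $\widehat{S} := S \times_{\spec(A)} \spec(A^{\wedge})$, with $\widehat{f}: \widehat{T} \to \widehat{S}$ the base change of $f$. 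Because being an isomorphism is fpqc-local on the target, it suffices to show $\widehat{f}$ is an isomorphism.

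I would then pass to formal completions. Let $T^{\wedge}$ and $S^{\wedge}$ denote the $I$-adic formal completions; these are proper formal schemes over $\spf(A^{\wedge})$ and may be identified with the formal completions of $\widehat{T}$ and $\widehat{S}$ respectively. Since $I$-adic Henselization does not affect reductions modulo powers of $I$---that is, $T^h \times_A A/I^n \simeq T \times_A A/I^n$ for all $n \ge 1$, by the construction of the Henselization of a pair---the formal completion $T^{\wedge}$ is functorially determined by $T^h$. Consequently $f^h$ induces a morphism $f^{\wedge}: T^{\wedge} \to S^{\wedge}$ of formal $A^{\wedge}$-schemes which is an isomorphism whenever $f^h$ is. By formal GAGA \cite[Theorem 5.1.4]{ega31} applied over $\spec(A^{\wedge})$, the natural map $\Hom_{\spec(A^{\wedge})}(\widehat{S}, \widehat{T}) \to \Hom_{\spf(A^{\wedge})}(S^{\wedge}, T^{\wedge})$ is a bijection. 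This algebraizes the inverse of $f^{\wedge}$ to a morphism $\widehat{g}: \widehat{S} \to \widehat{T}$, and the two compositions $\widehat{f} \circ \widehat{g}$ and $\widehat{g} \circ \widehat{f}$ complete formally to identity morphisms, hence equal identities by the injectivity of the same bijection. Thus $\widehat{f}$ is an isomorphism, and we are done by descent.

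The main technical subtlety is the clean identification of $f^{\wedge}$ as the ``completion'' of $f^h$, together with the verification that this completion of an isomorphism is itself an isomorphism. Once one has the term-by-term agreement of $I$-adic reductions of $T$, $T^h$, and $\widehat{T}$, this becomes immediate, since $f^h$ being an isomorphism forces every truncation of $f$ modulo $I^n$ to be an isomorphism of schemes, and $f^{\wedge}$ is the inverse limit of these truncations. The remaining ingredients---faithful flatness of $A \to A^{\wedge}$ for Noetherian Henselian pairs, and the classical content of formal GAGA on a Noetherian complete base---are all standard.
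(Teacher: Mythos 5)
Your argument is correct, but it is a genuinely different proof from the one in the paper. You reduce to the $I$-adically complete case by faithfully flat descent along $A \to A^\wedge$ (isomorphism being fpqc-local on the base), identify the $I$-adic completions of $T$, $T^h$, and $\widehat{T}$ truncation by truncation, and then algebraize the inverse of $f^\wedge$ using the formal GAGA bijection on Hom sets over the complete Noetherian base. The paper instead stays over $A$ and argues geometrically: from the fact that every truncation $f_r$ is an isomorphism it deduces that $f$ is flat and quasi-finite at all points of $T_0$, then uses openness of the flat and quasi-finite loci together with properness and $I \subseteq \Jac(A)$ (so the only open subset of $T$ containing $T_0$ is $T$ itself) to conclude that $f$ is finite flat everywhere, and finally uses local constancy of the fiber degree, which is $1$ along $T_0$, to conclude $f$ is an isomorphism. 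Your route is shorter and, in context, costs nothing extra since the surrounding proof of Corollary~\ref{cor:hensmapsalg} already invokes the formal GAGA Hom bijection; the paper's route keeps the lemma self-contained, using only openness results from EGA IV rather than Grothendieck existence. Two cosmetic points to fix: the faithful flatness of $A \to A^\wedge$ is justified by noting that every maximal ideal of $A$ contains $I$ (as $I \subseteq \Jac(A)$), hence $\mathfrak{m}A^\wedge \ne A^\wedge$, rather than by the contraction statement you give; and the bijection $\Hom_{\spec(A^\wedge)}(\widehat{S},\widehat{T}) \to \Hom_{\spf(A^\wedge)}(S^\wedge,T^\wedge)$ is the algebraization of morphisms (EGA III 5.4.1, a consequence of the existence theorem), not literally Theorem 5.1.4 of \cite{ega31}.
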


\begin{proof}[Proof of Lemma]
If $f^h: T^h \to S^h$ is an isomorphism, then for all $r$, the morphism $f_r: T_r \to S_r$ of the proper $A/I^{r+1}$-schemes $T_r := T \times_A A/I^{r+1}, S_r := S \times_A A/I^{r+1}$ is an isomorphism. Therefore $f^\wedge: T^\wedge \to S^\wedge$, the $I$-adic completion of $f$, is an isomorphism.

Hence for any point $t \in T_0$, the morphism $(f^\wedge)_t$ is an isomorphism. Therefore $f$ is flat and quasi-finite at all points $t$ of $T_0$.  

The flat locus and the quasi-finite locus of $f$ are open by \cite[Corollary 13.1.4]{ega43} and \cite[Theorem 11.3.1]{ega43} respectively. 

Therefore the locus of points of $T$ at which $f$ is both flat and quasi-finite is open and contains $T_0$. However, since $I$ is contained in the Jacobson radical $\Jac(A)$, we see that the only open set in $\spec(A)$ containing $\spec(A/I)$ is $\spec(A)$ itself; because the morphism $T \to \spec(A)$ is proper, we see that the only open set in $T$ containing $T_0= T \times_A A/I$ is all of $T$. 

Therefore $f$ is flat and quasi-finite. Since $T$ is proper over $\spec(A)$, $f$ is in fact finite and flat. The function sending points $t \in T$ to the fiber degree (the dimension of $f^{-1}(f(t))$) is locally constant. Since the fiber degree for all $t \in T_0$ is $1$ (because $f_0$ is an isomorphism), we see that there exists an open set $U \supset T_0$ on which the fiber degree is $1$. However, since the only open set containing $T_0$ is all of $T$, we see that $f$ is a finite flat morphism of fiber degree $1$. Hence $f$ is an isomorphism as we desired to show.\end{proof}

By Lemma~\ref{lem:lastlem}, since the composite map $$F^h \simto G \hookrightarrow X^h \times_{\sph(A)} Y^h \to X^h$$ is an isomorphism, the composite map $F \hookrightarrow X \times_{\spec(A)} Y \to X$ is an isomorphism as well. Therefore $F$ defines the graph of a morphism $X \to Y$ which algebraizes our morphism $X^h \to Y^h$. \end{proof}

\subsection{Failures of algebraizability}

The question remains open whether, in positive characteristic, \emph{all} coherent sheaves on a proper and finitely presented scheme over a Henselian base are algebraizable. However, in characteristic $0$ or mixed characteristic, we cannot have algebraizability for all coherent sheaves on a proper and finitely presented $A$-scheme. In fact even in the case of the projective line over a complete DVR, there exists of a locally free coherent sheaf of finite rank that is not algebraizable. Our example has rank $2$; we are unsure if this is minimal. 

\begin{example}\label{ex:algfail} For $A$ a complete DVR with fraction field $K$ of characteristic $0$, there exists a locally free coherent sheaf of rank $2$ on $(\P_A^1)^h$ which is not algebraizable; this follows from the nonvanishing of $\H^1((\P_A^1)^h,\O_{(\P_A^1)^h})$ that was proved by de Jong in \cite{cohpropblog}. We reproduce that argument here for the convenience of the reader.

We will make use of \v{Cech} cohomology, which always injects into cohomology for $\H^1$. Thus to show nonvanishing of $H^1((\P_A^1)^h,\O_{(\P_A^1)^h})$, it will suffice to show nonvanishing of the first \v{Cech} cohomology for $\O_{(\P_A^1)^h}$ with respect to the Henselization of standard covering of $\P_A^1$. As in the positive characteristic case, this is the cokernel of the map $A\{t\} \times A\{1/t\} \to A\{t,1/t\}$. \footnote{As in the proof of Proposition~\ref{prop:h0p1}, $A\{t\}=(A[t])^h$ the $I$-adic Henselization of the polynomial ring $A[t]$, and similarly $A\{1/t\}=(A[1/t])^h, A\{t,1/t\}=(A[t,1/t])^h$ for the $I$-adic Henselizations of $A[1/t],A[t,1/t]$.}

Since a complete DVR is a Noetherian domain and G-ring, the same is true for $A[t], A[t,1/t]$ by \spcite{07PV}{Proposition}. 

Thus we can proceed as in the proof of Lemma~\ref{lem:h1gen}, applying Lemma~\ref{lem:hensalgcplt} to $B=A[t],A[1/t]$ and $A[t,1/t]$. Each element $f \in (A[t,1/t])^\wedge$ (the $I$-adic completion) is a two-sided power series in $t$ with coefficients going $I$-adically to $0$ as the exponent goes to $\pm \infty$, and $f$ can be written as $f=f_++f_-$ for $f_+ \in (A[t])^\wedge, f_- \in (A[1/t])^\wedge$ (so $f_+$ is a power series in $t$ and $f_-$ is a power series in $t^{-1}$). Showing that the map $A\{t\} \times A\{1/t\} \to A\{t,1/t\}$ has nonzero cokernel amounts to showing that for some $f =f_++f_- \in (A[t,1/t])^\wedge$ which is algebraic over $A[t,1/t]$, the element $f_+ \in (A[t])^\wedge$ is {\it not} algebraic over $A[t]$.

Let $\omega$ be a uniformizer of the complete DVR $A$. In \cite{seriesblog} de Jong proves that in the above situation, the element $f=\sqrt{(1+\omega t)(1+\omega/t)} \in (A[t,1/t])^\wedge$, visibly algebraic over $A[t,1/t]$, has $f_+$  not algebraic over $A[t]$. Therefore the first \v{Cech} cohomology, and so the first cohomology, for $\O_{(\P_A^1)^h}$ does not vanish: $H^1((\P_A^1)^h,\O_{(\P_A^1)^h}) \ne 0$.

It will follow that there exists a locally free coherent sheaf of rank $2$ on $(\P_A^1)^h$ which is not algebraizable; to show this, we proceed with a similar argument to \hfcite{\exgrecowrong}. 

Since the group $\H^1((\P_A^1)^h,\O_{(\P_A^1)^h})$ describes extensions of $\O_{(\P_A^1)^h}$ by itself \spcite{0B39}{Lemma}, we have a sheaf $\mc{E}$ on $(\P_A^1)^h$ which fits into a short exact sequence $\O_{(\P_A^1)^h} \hookrightarrow \mc{E} \twoheadrightarrow \O_{(\P_A^1)^h}$ that does not split. Clearly $\mc{E}$ is coherent, even locally free of rank $2$.

Assume $\mc{E}$ is algebraizable, i.e. there exists a coherent sheaf $\mc{E}'$ on $\P_A^1$ such that $(\mc{E}')^h \simeq \mc{E}$. By Proposition~\ref{prop:hensffulexact}, Henselization of coherent sheaves is an exact and fully faithful functor. Hence the maps of the short exact sequence $\O_{(\P_A^1)^h} \hookrightarrow \mc{E} \twoheadrightarrow \O_{(\P_A^1)^h}$ also algebraize to a diagram $\O_{\P_A^1} \hookrightarrow \mc{E}' \twoheadrightarrow \O_{\P_A^1}$ that is short exact since $(\P_A^1)^h \to \P_A^1$ is flat over all closed points and $\P_A^1$ is proper over $A$. This necessarily splits since $\H^1(\P_A^1,\O_{\P_A^1}) = 0$. However, by the functoriality of Henselization of coherent sheaves, that would give a map $\mc{E} \to  \O_{(\P_A^1)^h}$ making $\mc{E}$ a split extension of $\O_{(\P_A^1)^h}$ by $\O_{(\P_A^1)^h}$, a contradiction. \end{example}

\section*{Appendix: Limits}
\addcontentsline{toc}{section}{Appendix: Limits}
\stepcounter{section}
\renewcommand{\thesection}{A}

In this Appendix we collect lemmas on limits that are useful for reducing from a non-Noetherian setting to the Noetherian setting. 

The following standard fact, which we record primarily as motivation for the Henselian version in the next lemma, is proved via the \v{Cech}-to-derived spectral sequence.

\begin{lemma}\label{lem:seplimcohom} Consider a cofiltered inverse system $\{X_\alpha\}$ of quasi-compact and separated schemes with affine transition morphisms $\phi_{\alpha\beta}: X_\alpha \to X_\beta$. Let $X=\varprojlim X_\alpha$ and let $\phi_\alpha: X \to X_\alpha$ be the canonical map.

Furthermore take quasi-coherent sheaves $\F_\alpha$ on $X_\alpha$ which form a filtered direct system via compatible isomorphisms $\phi_{\alpha\beta}^*\F_\beta \simto \F_\alpha$ for $\alpha \ge \beta$, and set $\F=\colim \phi_\alpha^*\F_\alpha$. Then the natural map $$\colim \H^j(X_\alpha,\F_\alpha) \to \H^j(X,\F)$$ is an isomorphism for each $j \ge 0$.
\end{lemma}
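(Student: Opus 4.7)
The plan is to reduce both sides to \v{C}ech cohomology with respect to compatibly chosen finite affine covers and then invoke exactness of filtered colimits at the level of \v{C}ech complexes.

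First I would fix a base index $\beta_0$ in the cofiltered system and a finite affine open cover $\mathcal{U}_{\beta_0}=\{U_{\beta_0,i}\}_{i=1}^n$ of $X_{\beta_0}$; such a cover exists because $X_{\beta_0}$ is quasi-compact. For every $\alpha\ge \beta_0$ set $\mathcal{U}_\alpha=\{\phi_{\alpha\beta_0}^{-1}(U_{\beta_0,i})\}_{i=1}^n$, and set $\mathcal{U}=\{\phi_{\beta_0}^{-1}(U_{\beta_0,i})\}_{i=1}^n$ on $X$. Because $\phi_{\alpha\beta_0}$ and $\phi_{\beta_0}$ are affine, each open in $\mathcal{U}_\alpha$ and $\mathcal{U}$ is affine. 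Since the $X_\alpha$ and $X$ are separated, every finite intersection of members of any such cover is again affine. Hence by the standard \v{C}ech-to-derived spectral sequence together with vanishing of higher cohomology of quasi-coherent sheaves on affines, both $\H^j(X_\alpha,\F_\alpha)$ and $\H^j(X,\F)$ can be computed as \v{C}ech cohomology with respect to these covers.

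Next I would identify the \v{C}ech complex for $\mathcal{U}$ with the filtered colimit of the \v{C}ech complexes for the $\mathcal{U}_\alpha$. For each finite intersection $V_{\beta_0}$ of members of $\mathcal{U}_{\beta_0}$, its preimage $V_\alpha\subseteq X_\alpha$ is affine, say $V_\alpha=\spec(B_\alpha)$, and its preimage $V\subseteq X$ is $\spec(\colim_\alpha B_\alpha)$ by the standard affine limit formalism. The compatibility hypothesis $\phi_{\alpha\beta}^*\F_\beta\simto\F_\alpha$ means that, writing $\F_{\beta_0}|_{V_{\beta_0}}$ as the sheaf associated to a $B_{\beta_0}$-module $M_{\beta_0}$, the $\F_\alpha|_{V_\alpha}$ correspond to $M_\alpha := B_\alpha\otimes_{B_{\beta_0}} M_{\beta_0}$, and $\F|_V$ corresponds to $\colim_\alpha M_\alpha$. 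Taking global sections therefore turns the formation of $\F$ into a termwise filtered colimit, giving $\check{C}^\bullet(\mathcal{U},\F)=\colim_\alpha \check{C}^\bullet(\mathcal{U}_\alpha,\F_\alpha)$ as a complex of abelian groups.

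Finally, since filtered colimits of abelian groups are exact, they commute with taking cohomology of a chain complex, so $\check{\H}^j(\mathcal{U},\F)=\colim_\alpha \check{\H}^j(\mathcal{U}_\alpha,\F_\alpha)$; combining with the identification in the first paragraph then gives the desired isomorphism, the maps being visibly the natural ones. The main technical obstacle is the identification used in the second paragraph: verifying that global sections of $\F$ over each affine limit are really the filtered colimit of sections of the $\F_\alpha$, compatibly with the \v{C}ech differentials. This is a standard consequence of the equivalence between quasi-coherent sheaves on an affine scheme and modules over its ring of global sections together with the affine-limit formula $\spec(\colim B_\alpha)=\varprojlim \spec(B_\alpha)$, but some bookkeeping is needed to check that the modules $M_\alpha$ assemble into a genuine direct system via the transition isomorphisms and that this direct system is respected by the \v{C}ech restriction maps between nested intersections.
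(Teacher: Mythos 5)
Your proof is correct and takes essentially the approach the paper intends: the paper records this lemma as a standard fact ``proved via the \v{C}ech-to-derived spectral sequence,'' and your argument --- pulling back a finite affine cover along the affine maps $\phi_{\alpha\beta_0}$ and $\phi_{\beta_0}$, using separatedness so that Leray/\v{C}ech computes $\H^j$, and commuting the termwise filtered colimit of \v{C}ech complexes with cohomology --- is exactly that standard argument with the details filled in. The only implicit points (cofinality of the indices $\alpha\ge\beta_0$, and that $X$ is quasi-compact and separated because it is affine over $X_{\beta_0}$) are routine and do not affect correctness.
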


Lemmas~\ref{lem:seplimcohom}~and~\ref{lem:henslimcohom} are used in Section~\ref{sec:nonnoeth} to prove Henselian cohomology comparison in the case of a non-Noetherian Henselian base ring (Theorem~\ref{thm:poscharnonnoeth}).

\begin{lemma}\label{lem:henslimcohom} 
Consider a cofiltered inverse system $\{X_\alpha\}$ of quasi-compact and separated schemes with affine transition morphisms $\phi_{\alpha\beta}: X_\alpha \to X_\beta$, with a compatible system of closed subschemes $Y_\alpha \subset X_\alpha$ (i.e. $\phi_{\alpha\beta}^{-1}(Y_\beta)=Y_\alpha$). Let $X=\varprojlim X_\alpha, Y= \varprojlim Y_\alpha$, so $Y$ is a closed subscheme of $X$. Let $\phi_\alpha: X \to X_\alpha$ be the canonical map.

Furthermore take quasi-coherent sheaves $\F_\alpha$ on $X_\alpha$ which form a filtered direct system via isomorphisms $\phi_{\alpha\beta}^*\F_\beta \simto \F_\alpha$ for $\alpha \ge \beta$, and set $\F=\colim \phi_\alpha^*\F_\alpha$. Then if $X_\alpha^h$ is the Henselization of $X_\alpha$ along $Y_\alpha$ and $X^h$ is the Henselization of $X$ along $Y$, the natural map $$\colim \H^j(X_\alpha^h,\F_\alpha^h) \to \H^j(X^h,\F^h)$$ is an isomorphism for each $j \ge 0$.
\end{lemma}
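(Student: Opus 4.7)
The plan is to mimic the \v{C}ech-cohomology proof of the scheme version (Lemma~\ref{lem:seplimcohom}): pick a compatible system of affine covers, identify the resulting \v{C}ech complexes of $\F_\alpha^h$ as filtered-colimit data at the ring-module level, and then pass from \v{C}ech to sheaf cohomology. First I would fix an index $\alpha_0$ and use quasi-compactness of $X_{\alpha_0}$ to pick a finite affine open cover $\mathcal{U}_{\alpha_0}=\{U_{\alpha_0,i}\}$. Because the transition morphisms $\phi_{\alpha\beta}$ are affine, pulling $\mathcal{U}_{\alpha_0}$ back gives a compatible system of finite affine covers $\mathcal{U}_\alpha$ of $X_\alpha$ for $\alpha\ge\alpha_0$, and in the limit an affine cover $\mathcal{U}$ of $X$. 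Separatedness of each $X_\alpha$ ensures that all finite intersections $U_{\alpha,i_0\ldots i_p}$ are again affine. Since the underlying topological space of $X_\alpha^h$ is $Y_\alpha$, restricting these covers to $Y_\alpha$ and taking Henselizations yields covers $\mathcal{U}_\alpha^h$ of $X_\alpha^h$ (and $\mathcal{U}^h$ of $X^h$) by Henselian affines.

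The second step is to identify the \v{C}ech complexes. By \hfcite{\lemmodpullback}, if $U_{\alpha,i_0\ldots i_p}=\spec(B_{\alpha,i_0\ldots i_p})$ with $\F_\alpha\rvert_{U_{\alpha,i_0\ldots i_p}}=\widetilde{M_{\alpha,i_0\ldots i_p}}$, then
\[
\Gamma\bigl(U_{\alpha,i_0\ldots i_p}^h,\F_\alpha^h\bigr)=M_{\alpha,i_0\ldots i_p}\otimes_{B_{\alpha,i_0\ldots i_p}}B_{\alpha,i_0\ldots i_p}^h,
\]
where the Henselization is taken along the ideal defining $Y_\alpha\cap U_{\alpha,i_0\ldots i_p}$. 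Now \spcite{0A04}{Lemma} says that Henselization of a pair commutes with filtered colimits of rings, and tensor products always commute with filtered colimits, so each term of the \v{C}ech complex for $\mathcal{U}^h$ is the filtered colimit over $\alpha$ of the corresponding terms for $\mathcal{U}_\alpha^h$. The \v{C}ech differentials are manifestly compatible, so $\colim_\alpha \check{C}^\bullet(\mathcal{U}_\alpha^h,\F_\alpha^h)=\check{C}^\bullet(\mathcal{U}^h,\F^h)$, and exactness of filtered colimits yields the analogous isomorphism on \v{C}ech cohomology.

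To finish, I would use the \v{C}ech-to-derived spectral sequence to pass from $\check{H}^j$ to $\H^j$; this degenerates once the higher cohomology of $\F_\alpha^h$ and $\F^h$ vanishes on each Henselian-affine intersection $U_{\alpha,i_0\ldots i_p}^h$, which is exactly the content of Theorem~\ref{thm:thmbpos} in the positive characteristic context in which this lemma is applied (see the proof of Theorem~\ref{thm:poscharnonnoeth}). The main obstacle is bookkeeping: one must verify that the maps $\phi_{\alpha\beta}$ really induce compatible morphisms of \v{C}ech complexes (not just of individual terms), so that the transition from ring-module colimits to cohomology colimits is strictly functorial, and that the spectral-sequence comparison between $\alpha$-indexed sides and the limit side respects the filtration on limit terms. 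Once those functorialities are laid out, the identification $\colim_\alpha \H^j(X_\alpha^h,\F_\alpha^h)\simto \H^j(X^h,\F^h)$ is immediate.
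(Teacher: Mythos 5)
Your limit bookkeeping (pulling back a finite affine cover from some index $\alpha_0$, identifying each term of the \v{C}ech complex on $X^h$ as the filtered colimit of the corresponding terms on the $X_\alpha^h$ via \hfcite{\lemmodpullback} and commutation of Henselization with filtered colimits \spcite{0A04}{Lemma}) is sound and is essentially the same computation the paper ends with. The genuine gap is in your passage from \v{C}ech cohomology to sheaf cohomology: you invoke vanishing of higher cohomology of $\F_\alpha^h$ and $\F^h$ on the Henselian-affine intersections, i.e.\ Theorem~\ref{thm:thmbpos}, which requires characteristic $p>0$. The lemma as stated carries \emph{no} characteristic hypothesis, and in characteristic $0$ or mixed characteristic that vanishing is false (de Jong's counterexample, cf.\ the discussion after Theorem~\ref{thm:thmbpos}), so your \v{C}ech-to-derived spectral sequence need not degenerate and your argument only proves the special case in which the lemma happens to be applied, not the statement itself. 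One cannot repair this by simply taking colimits of the full spectral sequences for your fixed cover, because controlling the colimit of the $E_2$-terms $\check{H}^i(\mathcal{U}_\alpha^h,\underline{\H}^j(\F_\alpha^h))$ for $j>0$ requires exactly the assertion of the lemma in lower degrees on the intersections --- which is the circularity the paper's proof is organized to break.

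The paper avoids any characteristic assumption as follows: it uses the \v{C}ech-to-cohomology spectral sequence taken in the limit over \emph{all} open coverings \spcite{01ES}{Lemma}, where the edge terms $\check{H}^0(X^h,\underline{\H}^q(\F^h))$ vanish for $q>0$ because the presheaves $\underline{\H}^q$ sheafify to zero \spcite{01E3}{Lemma}; it then runs an induction on the cohomological degree $q$ (the limit term $\H^q$ depends only on $E_2^{0,q}=0$ and on $E_2^{i,j}$ with $j<q$, to which the inductive hypothesis applies), reducing the lemma to the degree-$0$ statement, then to the case of affine $X_\alpha$, where $\colim_\alpha M_\alpha\otimes_{A_\alpha}A_\alpha^h \to M\otimes_A A^h$ is an isomorphism precisely because Henselization commutes with filtered colimits. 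If you only need the lemma in the positive-characteristic setting of Theorem~\ref{thm:poscharnonnoeth}, your shorter route is fine (cite \spcite{01ET}{Lemma} together with Lemma~\ref{lem:thmbhet} or Theorem~\ref{thm:thmbpos} for the degeneration); but to prove the lemma as stated you must replace that step with an argument along the paper's lines that makes no vanishing assumption on Henselian affines.
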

\begin{proof}
We begin by reducing to the case $i=0$.

Taking the limit of the \v{C}ech-to-cohomology spectral sequence (as in \spcite{01ES}{Lemma}) for all open coverings of $X_\alpha^h$ we have for each $\alpha$ a spectral sequence $\prescript{\alpha}{}E_\bullet^\dublet$ with second sheet $\check{H}^i(X_\alpha^h,\underline{\H}^j(\F_\alpha^h))$ which abuts to $\H^{i+j}(X_\alpha^h,\F_\alpha^h)$. We also get a similar spectral sequence for $X^h$ with second sheet $\check{H}^i(X^h,\underline{\H}^j(\F^h))$ which abuts to $\H^{i+j}(X^h,\F^h)$. 

By the functoriality of the spectral sequences for \v{C}ech cohomology, we have a commutative diagram

\begin{center}
\begin{tikzcd}[column sep=2em]
\colim_{\alpha} \check{H}^i(X_\alpha^h,\underline{\H}^j(\F_\alpha^h)) \arrow[Rightarrow,r] \arrow[d] &\colim_\alpha \H^{i+j}(X_\alpha^h,\F_\alpha^h) \arrow[d]\\
\check{H}^i(X^h,\underline{\H}^j(\F^h)) \arrow[Rightarrow,r]& \H^{i+j}(X^h,\F^h) 
\end{tikzcd}
\end{center}

where the horizontal arrows are the abutments of the spectral sequences. Let $E_2^{i,j}=\check{H}^i(X^h,\underline{\H}^j(\F^h))$.

For $j > 0$, the presheaves $\underline{\H}^j(\F_\alpha^h)$ on $X_\alpha^h$ and $\underline{\H}^j(\F^h)$ on $X^h$ sheafify to $0$ by \spcite{01E3}{Lemma}. Therefore for all $\alpha$ and all $q > 0$, the term $\prescript{\alpha}{}E_2^{0,q}$ vanishes.

The limit term $\H^{q}(X_\alpha^h,\F_\alpha^h)$ of the spectral sequence $\prescript{\alpha}{}E_\bullet^\dublet$ depends only on: $\prescript{\alpha}{}E_2^{0,q}$ (which is $0$ for $q > 0$), the terms $\prescript{\alpha}{}E_2^{i,j}$ with $j < q$, and maps among these, because of the direction of the differential maps. 

Therefore if we fix $q > 0$ and assume that for all $j < q$ and all $X_\alpha,Y_\alpha,$ etc. we have the statement of the lemma, we can use the commutative diagram above to deduce that the lemma is true for $q$ as well. 

Therefore we have reduced to showing that $\colim \Gamma(X_\alpha^h,\F_\alpha^h) \to \Gamma(X^h,\F^h)$ is an isomorphism. We may easily reduce to the case of all of the $X_\alpha$ being affine.

Now write $X_\alpha=\spec(A_\alpha)$ and $Y_\alpha=\spec(A_\alpha/I_\alpha)$ for rings $A_\alpha$ and ideals $I_\alpha \subset A_\alpha$. Then if \linebreak$A=\colim A_\alpha, I = \colim I_\alpha$, we have $X=\spec(A),Y=\spec(A/I)$. Furthermore since the $\F_\alpha$ are quasi-coherent we see that they correspond to $A_\alpha$-modules $M_\alpha$ with $\F$ corresponding to the $A$-module $M=\colim M_\alpha$. By \hfcite{\lemmodpullback}, for each sheaf $\F_{\alpha}^h$ on $X_\alpha^h$ we have $$\F_{\alpha}^h=\widetilde{M_\alpha \tens_{A_\alpha} A_\alpha^h},$$ and similarly on $X^h$ we have an equality of sheaves $$\F^h = \widetilde{M \tens_A A^h}.$$ Then what we wish to show is that $\colim_\alpha M_\alpha \tens_{A_\alpha} A_\alpha^h \to M \tens_A A^h$ is an isomorphism, where $A_\alpha^h$ is the Henselization of $A_\alpha$ along $I_\alpha$ and $A^h$ is the Henselization of $A$ along $I$. This is true since Henselization commutes with filtered colimits \spcite{0A04}{Lemma}. \end{proof}

The following three lemmas are used in Section~\ref{chap:algzation}. Lemmas~\ref{lem:schemelem}~and~\ref{lem:sheaflem} are used to reduce algebraization of subsheaves (Theorem~\ref{thm:subshf}) to the case of a Noetherian G-ring base (Lemma~\ref{lem:noethg}), and then to the case of an $I$-adically complete base ring (Lemma~\ref{lem:cplt}); Lemma~\ref{lem:subschemelem} is used in the proof of the Henselian Chow's Theorem (Corollary~\ref{cor:henschow}.)

\begin{lemma}\label{lem:schemelem} Suppose that $(A,I)$ is a filtered colimit of Henselian pairs $(A_i,I_i)$. (Note that from the definitions it is clear that $(A,I)$ is also Henselian.) If $X,Y$ are finitely presented $A$-schemes and $f: X^h \to Y^h$ is a map of Henselian schemes over $\sph(A)$, then there exists an index $i_0$ and finitely presented $A_{i_0}$-schemes $X_{i_0}, Y_{i_0}$ with a map of $I_{i_0}$-adic Henselizations $f_{i_0}: X_{i_0}^h \to Y_{i_0}^h$ over $\sph(A_{i_0})$ so that \begin{enumerate}[(i)] \item $X=X_{i_0} \otimes_{A_{i_0}} A, Y = Y_{i_0} \otimes_{A_{i_0}} A$; \item and $f$ is the pullback of $f_{i_0}$ to a map $X^h \to Y^h$.  \end{enumerate}
\end{lemma}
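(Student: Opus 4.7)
The plan is to descend the finitely presented schemes $X, Y$ by standard limit arguments, then descend the morphism $f$ by working over a finite affine cover, using the universal property of Henselization to reduce the descent to ring maps out of finitely presented rings, and finally checking that the local descents glue after possibly enlarging the index.

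First, since $X$ and $Y$ are finitely presented over $A = \colim_i A_i$, by \spcite{01ZM}{Lemma} there is an index $i_0$ and finitely presented $A_{i_0}$-schemes $X_{i_0}, Y_{i_0}$ with $X \simeq X_{i_0} \otimes_{A_{i_0}} A$ and $Y \simeq Y_{i_0} \otimes_{A_{i_0}} A$. For each $i \ge i_0$, set $X_i := X_{i_0} \otimes_{A_{i_0}} A_i$ and $Y_i := Y_{i_0} \otimes_{A_{i_0}} A_i$. Because Henselization commutes with filtered colimits of rings \spcite{0A04}{Lemma}, on affine opens $\sph(B_j^h) \subset X^h$ arising from $\spec(B_j) \subset X$ we have $B_j^h = \colim_i B_{j,i}^h$, and similarly for $Y^h$.

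Second, I would fix a finite affine open cover $\{V_k = \spec(C_k)\}_{k=1}^n$ of $Y$, descending to a cover $\{V_{k, i_0}\}_k$ of $Y_{i_0}$ after perhaps enlarging $i_0$. Since $X^h$ is quasi-compact and the opens $f^{-1}(V_k^h)$ cover it, I can refine to obtain a finite affine Henselian cover $\{U_j^h = \sph(B_j^h)\}_{j=1}^m$ of $X^h$ with $U_j = \spec(B_j) \subset X$ and $f(U_j^h) \subseteq V_{k(j)}^h$ for some assignment $j \mapsto k(j)$. These $U_j$ also descend to affine opens of $X_{i_0}$ (enlarging $i_0$ as needed).

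Third, each restriction $f|_{U_j^h}$ corresponds, via the affine description of morphisms of Henselian schemes in \cite{hensfound}, to a ring map $\psi_j: C_{k(j)}^h \to B_j^h$ of Henselian pairs over $(A, I)$. By the universal property of Henselization, $\psi_j$ is determined by its precomposition $\widetilde{\psi}_j: C_{k(j)} \to B_j^h$ (which must send $IC_{k(j)}$ into $IB_j^h$). Since $C_{k(j)}$ is finitely presented over $A$ and $B_j^h = \colim_i B_{j, i}^h$, there is $i_1 \ge i_0$ large enough that the images of a finite generating set of $C_{k(j), i_0}$ lift to $B_{j, i_1}^h$ and the finitely many defining relations hold there, producing a map $\widetilde{\psi}_{j, i_1}: C_{k(j), i_1} \to B_{j, i_1}^h$ over $A_{i_1}$ carrying $I_{i_1} C_{k(j), i_1}$ into $I_{i_1} B_{j, i_1}^h$. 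By the universal property, this factors through Henselization to give $\psi_{j, i_1}: C_{k(j), i_1}^h \to B_{j, i_1}^h$, hence a morphism $(U_{j, i_1})^h \to (V_{k(j), i_1})^h$ of affine Henselian schemes.

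Fourth, these local morphisms glue after one further enlargement of the index: for each pair $j, j'$, the restrictions of $\psi_j$ and $\psi_{j'}$ to (distinguished affines in) the overlap $(U_j \cap U_{j'})^h$ must agree, and each such agreement is an equality of ring maps determined by finitely presented data via the universal property, hence holds at some finite stage. Taking the maximum over the finitely many overlap conditions yields an index $i_0$ at which all local pieces glue into a morphism $f_{i_0}: X_{i_0}^h \to Y_{i_0}^h$ of Henselian schemes over $\sph(A_{i_0})$, and by construction the base change of $f_{i_0}$ to $\sph(A)$ recovers $f$. The hardest step is the third one: the rings $B_j^h$ and $C_{k(j)}^h$ are in general not finitely presented over $A^h$, so a direct descent of $\psi_j$ is unavailable; the universal property of Henselization is essential, as it converts the problem into the descent of a ring map out of the finitely presented $C_{k(j)}$ into the filtered colimit $B_j^h = \colim_i B_{j, i}^h$, where the standard finite-presentation yoga applies. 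Step four is then routine via the same principle applied to finitely many gluing conditions.
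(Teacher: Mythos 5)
Your proposal is correct and follows essentially the same route as the paper's proof: descend $X,Y$ by the standard finite-presentation limit lemma, handle the affine case by descending the ring map through the universal property of Henselization together with the fact that Henselization commutes with filtered colimits, and glue the local descents after enlarging the index using that two maps determined by finitely many generators which agree in the colimit agree at a finite stage. The only cosmetic difference is that the paper works with explicit presentations $A\{X_1,\dots,X_n\}/(f_1,\dots,f_m)$ and an explicit uniqueness claim, whereas you phrase the same argument more abstractly via the precomposition $C_{k(j)} \to B_j^h$.
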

\begin{proof} Locally, the map $f: X^h \to Y^h$ of Henselian schemes, which are both \hlfp over $\sph(A)$, can be described by a map $\sph(C) \to \sph(B)$, which arises from a map $\varphi: B \to C$ where $B,C$ are \hfp $A$-algebras. Then $B,C$ are the $I$-adic Henselizations of finitely presented $A$-algebras $B_0,C_0$ such that $\spec(C_0)$ is an affine open in $X$, $\spec(B_0)$ is an affine open in $Y$. 

If $$B_0 \simeq A[X_1,\dots,X_n]/(f_1,\dots,f_m),\;C_0 \simeq A[Y_1,\dots, Y_r]/(g_1,\dots,g_s),$$ it follows that  $$B \simeq A\{X_1,\dots,X_n\}/(f_1,\dots,f_m),\;C \simeq A\{Y_1,\dots, Y_r\}/(g_1,\dots,g_s).$$ 

As Henselization commutes with filtered colimits \spcite{0A04}{Lemma} we see that $$A\{X_1,\dots,X_n\} = \colim A_i\{X_1,\dots,X_n\}, \;A\{Y_1,\dots,Y_r\} = \colim A_i\{Y_1,\dots,Y_r\}.$$ Therefore we can find $i_0$ and elements $$F_1,\dots,F_m \in A_{i_0}\{X_1,\dots,X_n\},\; G_1,\dots,G_s \in A_{i_0}\{Y_1,\dots,Y_r\}$$ such that the image of the $F_j$ in $A\{X_1,\dots,X_n\}=\colim A_i\{X_1,\dots,X_n\}$ is $f_j$ (with a similar statement for the elements $G_j, g_j$). 

Furthermore, if $h_i := \varphi(X_i) \in C$, we can find $H_1,\dots,H_n \in A_{i_0}\{Y_1,\dots,Y_r\}/(G_1,\dots,G_s)$ such that the image of $H_j$ in $A_{i_0}\{Y_1,\dots,Y_r\}/(G_1,\dots,G_s) \otimes_{A_{i_0}} A = C$ is $h_j$. 

Then we can define a map $\phi_{i_0}: A_{i_0}[X_1,\dots,X_n] \to A_{i_0}\{Y_1,\dots,Y_r\}/(G_1,\dots,G_s)=:C_{i_0}$ by $\phi_{i_0}(X_j)=H_j$. This map factors through $A_{i_0}\{X_1,\dots,X_n\}$ by the universal property of Henselization, giving a map $\varphi'_{i_0}: A_{i_0}\{X_1,\dots,X_n\} \to C_{i_0}$. 

We see that the elements $\varphi'_0(F_j)$ must be $0$ after tensoring with $A$, or after taking the colimit; thus we can increase $i_0$ as needed so that $\varphi'_{i_0}(F_j)=0 \in C_{i_0}$ for all $j$. This gives us a map $\varphi_{i_0}: B_{i_0} := A_{i_0}\{X_1,\dots,X_n\}/(F_1,\dots,F_m) \to C_{i_0}$ which is by definition equal to $\varphi: B \to C$ after tensoring with $A$. 

Hence in the case $X^h,Y^h$ are Henselian-affine, we can use the map $f_{i_0}: \sph(C_{i_0},I_{i_0}C_{i_0}) \to \sph(B_{i_0},I_{i_0}B_{i_0})$ arising from $\varphi_{i_0}$. 

Before we prove the general case, we make the following claim: 

\noindent\textit{Claim:} If $\psi_{i_0}: B_{i_0} \to C_{i_0}$ is another $A_{i_0}$-map of Henselian pairs such that $\psi_{i_0} \otimes_{A_{i_0}} A = \varphi_{i_0} \otimes_{A_{i_0}} A = \varphi$, then there exists some index $i_1 \ge i_0$ such that $\varphi_{i_0} \otimes_{A_{i_0}} A_{i_1} = \psi_{i_0} \otimes_{A_{i_0}} A_{i_1}$. 

\begin{proof}[Proof of Claim] Note that since $B_{i_0} = A_{i_0}\{X_1,\dots,X_n\}/(F_1,\dots,F_m)$, the map $\varphi_{i_0}$ is determined by $\alpha_j := \varphi_{i_0}(X_j) \in C_{i_0}$. Similarly $\psi_{i_0}$ is determined by the elements $\beta_j := \psi_{i_0}(X_j) \in C_{i_0}$. Because $\psi_{i_0} \otimes_{A_{i_0}} A = \varphi_{i_0} \otimes_{A_{i_0}} A = \varphi$, this means that for each $j$ the image of $\alpha_j$ and $\beta_j$ are equal in $C=C_{i_0} \otimes_{A_{i_0}} A$. Therefore we can choose $i_1 \ge i_0$ so that the images of $\alpha_j,\beta_j$ are equal in $C_{i_1}=C_{i_0} \otimes_{A_{i_0}} A_{i_1}$, which would mean that $\varphi_{i_0} \otimes_{A_{i_0}} A_{i_1} = \psi_{i_0} \otimes_{A_{i_0}} A_{i_1}$. \end{proof}

The general case can be deduced from the affine case by gluing as follows. By \spcite{01ZM}{Lemma}, we can find an index $i_0$ and finitely presented $A_{i_0}$-schemes $X_{i_0}, Y_{i_0}$ such that $X=X_{i_0} \otimes_{A_{i_0}} A, Y = Y_{i_0} \otimes_{A_{i_0}} A$. Let $Y_{i_0}=\bigcup_j V_{i_0,j}$ be a finite affine open cover of $Y_{i_0}$, with $Y_{i_0}^h=\bigcup_j V_{i_0,j}^h, Y=\bigcup V_j, Y^h=\bigcup V_j^h$ the corresponding finite (Henselian) affine open covers of $Y_{i_0}^h, Y, Y^h$. 

For each $j$, we use the fact that $X$ and hence $X^h$ are qcqs to cover $f^{-1}(V_{j}^h)$ with finitely many Henselian affine opens $f^{-1}(V_{j}^h)=\bigcup_k U_{j,k}^h$, where each Henselian affine open $U_{j,k}^h$ is the Henselization of an affine open $U_{j,k} \subseteq X$. Possibly increasing $i_0$, we can assume that each affine open $U_{j,k}$ arises from an affine open $U_{i_0,j,k} \subseteq X_{i_0}$. 

Then for each map $f|_{U_{j,k}^h}: U_{j,k}^h \to V_j^h$, we can use the affine case of the lemma which we have proved to obtain maps $f_{i_0,j,k}: U_{i_0,j,k}^h \to V_{i_0,j}^h$ such that $f_{i_0,j,k} \otimes_{A_{i_0}} A = f|_{U_{j,k}^h}$ after possibly increasing $i_0$. The intersection of any two $U_{i_0,j,k}^h$ can be covered with finitely many Henselian affine opens as well, so using the claim above, we can increase $i_0$ so that the $f_{i_0,j,k}$ are equal on each of these intersections. This allows us to glue the maps together to obtain a map $f_{i_0}: X_{i_0}^h \to Y_{i_0}^h$ which accomplishes the desired.
\end{proof}

\begin{lemma}\label{lem:subschemelem} In the setting of Lemma~\ref{lem:schemelem}, if $Z$ is a Henselian scheme which is \hlfp and qcqs over $\sph(A)$ with an \hlfp closed Henselian subscheme $\ol{Z} \subseteq Z$, then there exists an index $i_0$ and a Henselian scheme $Z_{i_0}$, which is \hlfp over $\sph(A_{i_0})$, with an \hlfp closed Henselian subscheme $\ol{Z}_{i_0}$ such that $Z = Z_{i_0} \otimes_{A_{i_0}} A, \ol{Z} = \ol{Z}_{i_0} \otimes_{A_{i_0}} A$. 
\end{lemma}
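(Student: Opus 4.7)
The plan is to carry out the descent in two stages: first descend the ambient Henselian scheme $Z$ itself to a Henselian scheme $Z_{i_0}$ that is \hlfp over $\sph(A_{i_0})$, and then descend the closed Henselian subscheme $\ol Z \subseteq Z$ by descending its local defining ideals. Both stages will follow the pattern of the proof of Lemma~\ref{lem:schemelem}, using that Henselization commutes with filtered colimits \spcite{0A04}{Lemma} together with the Henselian analogue of ``Nike's trick'' for distinguished affines \hfcite{\propab{} 4.2.1}, which guarantees that pairwise overlaps of distinguished Henselian-affine opens are themselves Henselian-affine.

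For the first stage, since $Z$ is \hlfp and qcqs over $\sph(A)$, we cover $Z$ by finitely many distinguished Henselian-affine opens $U^{(j)} = \sph(B_j)$ where each $B_j = A\{X_1, \dots, X_{n_j}\}/(f^{(j)}_1, \dots, f^{(j)}_{m_j})$ is an \hfp $A$-algebra. Because $A\{X\} = \colim_i A_i\{X\}$, we fix a sufficiently large index $i_0$ and representatives $F^{(j)}_\ell \in A_{i_0}\{X_1, \dots, X_{n_j}\}$ of the $f^{(j)}_\ell$, producing \hfp $A_{i_0}$-algebras $B_j^{(i_0)}$ with $B_j^{(i_0)} \otimes_{A_{i_0}} A = B_j$. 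By Nike's trick each pairwise overlap $U^{(j)} \cap U^{(k)}$ is Henselian-affine $\sph(B_{jk})$, and by the affine case of Lemma~\ref{lem:schemelem} we descend the two open immersions $U^{(j)} \cap U^{(k)} \hookrightarrow U^{(j)}, U^{(k)}$ after enlarging $i_0$; the cocycle condition on triple overlaps is then arranged by one further enlargement using the Claim in the proof of Lemma~\ref{lem:schemelem}. Gluing the descended pieces yields the desired Henselian scheme $Z_{i_0}$ over $\sph(A_{i_0})$ with $Z_{i_0} \otimes_{A_{i_0}} A \simeq Z$.

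For the second stage, on each piece $U^{(j)}$ the closed subscheme $\ol Z \cap U^{(j)}$ is cut out by a finitely generated ideal $K_j = (h_{j,1}, \dots, h_{j, r_j}) \subseteq B_j$. Since $B_j = \colim_{i \geq i_0} B_j^{(i_0)} \otimes_{A_{i_0}} A_i$, we lift each generator $h_{j,k}$ to an element of $B_j^{(i_1)} := B_j^{(i_0)} \otimes_{A_{i_0}} A_{i_1}$ for some $i_1 \geq i_0$, yielding a finitely generated ideal $K_j^{(i_1)} \subseteq B_j^{(i_1)}$ with $K_j^{(i_1)} \cdot B_j = K_j$. The hard part will be matching these ideals on overlaps: the finitely generated ideals $K_j^{(i_1)} \cdot B_{jk}^{(i_1)}$ and $K_k^{(i_1)} \cdot B_{jk}^{(i_1)}$ need not coincide in $B_{jk}^{(i_1)}$ even though they agree after base change to $B_{jk}$, so we express each generator of one as a finite $B_{jk}$-linear combination of generators of the other and lift the coefficients to $B_{jk}^{(i_2)}$ for some $i_2 \geq i_1$ large enough that both inclusions already hold at level $i_2$. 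Because $Z$ is qcqs there are only finitely many overlaps and only finitely many such enlargements are needed, producing an \hlfp closed Henselian subscheme $\ol Z_{i_2} \subseteq Z_{i_2}$ whose base change to $A$ recovers $\ol Z$.
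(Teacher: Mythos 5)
Your overall strategy coincides with the paper's: descend a finite Henselian-affine cover of $Z$ using $A\{X_1,\dots,X_n\}=\colim_i A_i\{X_1,\dots,X_n\}$, handle overlaps via the Henselian ``Nike's trick'' and the Claim from the proof of Lemma~\ref{lem:schemelem}, glue, and then descend the finitely generated ideal cutting out $\ol{Z}$ chart by chart (your explicit matching of ideal generators on overlaps is a slightly more detailed version of what the paper summarizes as gluing the descended closed subschemes after increasing the index). However, two steps need repair. First, you assert that each pairwise overlap $U^{(j)}\cap U^{(k)}$ \emph{is} Henselian-affine, $\sph(B_{jk})$. Since $Z$ is only assumed qcqs, not separated, this is not what \hfcite{Proposition 4.2.1} gives: it lets you cover the overlap by finitely many opens that are simultaneously \emph{distinguished} affines in both charts, and quasi-compactness of the overlap supplies the finiteness. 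Your stage-two ideal matching must therefore also be carried out on these finitely many distinguished affines rather than in a single ring $B_{jk}$; the fix is routine, but as written your argument quietly assumes separatedness.

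Second, and more substantively, to glue the descended charts into a Henselian scheme $Z_{i_0}$ you need the descended transition maps $U_{i_0,j,k,\ell}\to V_{i_0,j}$ to be \emph{open immersions}. The affine case of Lemma~\ref{lem:schemelem} only produces maps of \hfp $A_{i_0}$-algebras whose base change to $A$ recovers the given inclusion; a priori such a descended map need not be an open immersion, so the gluing is not yet justified. The paper settles this by exploiting the distinguished-affine presentation: $U_{j,k,\ell}=\sph(R_\alpha^h)\simeq\sph(R\{T\}/(1-\alpha T))$, so after increasing $i_0$ the descended map can be arranged to be of the form $R_{i_0}\to R_{i_0}\{T\}/(1-\beta T)\simeq (R_{i_0})_\beta^h$, i.e.\ again the inclusion of a distinguished affine open. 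Your proposal skips this point entirely; once it is added (and the overlap issue above corrected), your argument agrees with the paper's proof.
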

\begin{proof} Locally, the Henselian scheme $Z$, which is \hlfp over $\sph(A)$, can be described as $\sph(B)$ for an \hfp $A$-algebra $B$, which is the $I$-adic Henselization of a finite presented $A$-algebra $B_0=A[X_1,\dots,X_n]/(f_1,\dots,f_m)$, so $B = A\{X_1,\dots,X_n\}/(f_1,\dots,f_m)$. We can choose a Henselian affine cover of $Z$ so that on each Henselian affine $\sph(B)$, the \hlfp closed Henselian subscheme $\ol{Z}$ can be described as $\sph(C)$ for $C$ which is the quotient of $B$ by a finitely generated ideal: $C \simeq A\{X_1,\dots,X_n\}/(f_1,\dots,f_m, g_1,\dots,g_r)$ (where the $g_j$ lie in $A\{X_1,\dots,X_n\}$).

As Henselization commutes with filtered colimits \spcite{0A04}{Lemma} we see that $$A\{X_1,\dots,X_n\} = \colim A_i\{X_1,\dots,X_n\}.$$ Thus we can find $i_0$ and elements $F_1,\dots,F_m, G_1,\dots, G_r \in A_{i_0}\{X_1,\dots,X_n\}$ such that the image of the $F_j$ in $A\{X_1,\dots,X_n\}=\colim A_i\{X_1,\dots,X_n\}$ is $f_j$ (with a similar statement for the elements $G_j, g_j$). 

It then follows that for $$B_{i_0}=A_{i_0}\{X_1,\dots,X_n\}/(F_1,\dots,F_m), C_{i_0} = A_{i_0}\{X_1,\dots,X_n\}/(F_1,\dots,F_m,G_1,\dots,G_r)$$ we have $B_{i_0} \otimes_{A_{i_0}} A \simeq B, C_{i_0} \otimes_{A_{i_0}} A \simeq C$. Therefore in the case where $Z,\ol{Z}$ are affine, we can use the closed Henselian subscheme $\ol{Z}_{i_0}=\sph(C_{i_0},I_{i_0}C_{i_0})$ inside $Z_{i_0}=\sph(B_{i_0},IB_{i_0})$.  

The general case can be deduced from the affine case by gluing as follows. Since $Z$ is qcqs, we can choose a finite Henselian affine open cover $Z=\bigcup_j V_j$ such that, as above, each of the intersections $\ol{Z} \cap V_j =: \ol{V}_j$ is a closed immersion $\ol{V}_j \to V_j$ defined by an ideal of $\O(V_j)$.

For each pair $j,k$  we can cover the intersection $V_j \cap V_k$ with finitely many Henselian affine opens $V_j \cap V_k = \bigcup_\ell U_{j,k,\ell}$ such that each $U_{j,k,\ell}$ is a distinguished affine open both in $V_j$ and in $V_k$ \hfcite{Proposition 4.2.1}. Since any Henselian affine scheme $\sph(R,J)$ over $\sph(A)$ is the Henselization of an affine scheme $\spec(R)$ over $\spec(A)$ at a closed subscheme $\spec(R/J)$, we see that we can apply the affine case of Lemma~\ref{lem:schemelem} to each of the inclusion maps $f_{j,k,\ell}: U_{j,k,\ell} \to V_j$. We thus obtain an index $i_0$ and Henselian affine schemes $V_{i_0,j}, U_{i_0,j,k,\ell}$ over $\sph(A)$ with maps $f_{i_0,j,k,\ell}: U_{i_0,j,k,\ell} \to V_{i_0,j}$ such that $f_{i_0,j,k,\ell} \otimes_{A_{i_0}} A = f_{j,k,\ell}$. 

Since $U_{j,k,\ell}$ is a distinguished affine open in $V_j$, we see that if $V_j=\sph(R)$ for an \hfp $A$-algebra $R=A\{T_1,\dots,T_n\}/(\alpha_1,\dots,\alpha_m)$, then $U_{j,k,\ell}=\sph(R_\alpha^h)$ for some $\alpha \in R$. Then $$U_{j,k,\ell}=\sph(R_\alpha^h)\simeq\sph(R\{T\}/(1-\alpha T))\simeq\sph(A\{T_1,\dots,T_{n+1}\}/(\alpha_1,\dots,\alpha_m,1-\alpha T_{n+1});$$ following the construction in the proof of Lemma~\ref{lem:schemelem} we see that (after possibly increasing $i_0$) the map $f_{i_0,j,k,\ell}: U_{i_0,j,k,\ell} \to V_{i_0,j}$ will arise from a map of \hfp $A_{i_0}$-algebras $$R_{i_0} = A_{i_0}\{T_1,\dots,T_n\}/(\beta_1,\dots,\beta_m) \to A_{i_0}\{T_1,\dots,T_{n+1}\}/(\beta_1,\dots,\beta_m,1-\beta T_{n+1})$$ for some $\beta \in R_{i_0}$. In particular, $f_{i_0,j,k,\ell}$ is also an open immersion, as $$U_{i_0,j,k,\ell}=A_{i_0}\{T_1,\dots,T_{n+1}\}/(\beta_1,\dots,\beta_m,1-\beta T_{n+1})\simeq R_{i_0}\{T\}/(1-\beta T) \simeq (R_{i_0})_\beta^h$$ is a distinguished affine open in $V_{i_0,j}$. Furthermore, we can use the claim in the proof of Lemma~\ref{lem:schemelem} to ensure that the $f_{i_0,j,k,\ell}$ satisfy an appropriate cocycle condition on triple overlaps by increasing $i_0$. 

Then we can ``glue'' the $V_{i_0,j}$ with the maps $f_{i_0,j,k,\ell}: U_{i_0,j,k,\ell} \to V_{i_0,j}$ to obtain a Henselian scheme $Z_{i_0} = \bigcup_j V_{i_0,j}$, which will be \hlfp over $\sph(A_{i_0})$ by construction, and which will satisfy $Z_{i_0} \otimes_{A_{i_0}} A = Z$. Furthermore, we can increase $i_0$ and apply the affine case of this lemma to obtain for each $j$ a closed immersion of Henselian schemes $\ol{V}_{i_0,j} \subseteq V_{i_0,j}$ with $\ol{V}_{i_0,j} \otimes_{A_{i_0}} A = \ol{V}_j$, which we can ``glue'' in a similar way to get a closed subscheme $\ol{Z}_{i_0}$ such that $\ol{Z} = \ol{Z}_{i_0} \otimes_{A_{i_0}} A$. \end{proof}

\begin{lemma}\label{lem:sheaflem} In the setting of Lemma~\ref{lem:schemelem}, if $\F,\G$ are finitely presented $\O_{X^h}$-modules with a map $g: \F \to \G$, then there exists an index $i_0$ and a finitely presented $A_{i_0}$-scheme $X_{i_0}$ with a map of $\O_{X_{i_0}^h}$-modules $g_{i_0}: \F_{i_0} \to \G_{i_0}$ (where $X_{i_0}^h$ is the $I_{i_0}$-adic Henselization) such that \begin{enumerate}[(i)] \item $X=X_{i_0} \otimes_{A_{i_0}} A, \F=\F_{i_0} \times_{X_{i_0}^h} X^h,   \G=\G_{i_0} \times_{X_{i_0}^h} X^h;$ \item the map $g_{i_0}$ pulls back to $g$. \end{enumerate}

Furthermore, we can ensure that if $g$ is an isomorphism, so is $g_{i_0}$.
\end{lemma}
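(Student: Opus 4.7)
The plan is to mirror the strategy of Lemma~\ref{lem:schemelem}: first descend the scheme $X$ using \spcite{01ZM}{Lemma}, then descend $\F,\G$, and $g$ on a finite Henselian affine cover, and finally glue. Fix an index $i_0$ and a finitely presented $A_{i_0}$-scheme $X_{i_0}$ with $X = X_{i_0} \otimes_{A_{i_0}} A$ (possibly increasing $i_0$ at each later step). Choose a finite affine open cover $\{\spec(B_{i_0,j,0})\}_j$ of $X_{i_0}$; this pulls back to a finite Henselian affine cover $X^h = \bigcup_j \sph(B_j)$ with $B_j = (B_{i_0,j,0} \otimes_{A_{i_0}} A)^h$, and since Henselization commutes with filtered colimits \spcite{0A04}{Lemma} we have $B_j = \colim_{i \ge i_0} B_{i,j}$ for $B_{i,j} = (B_{i_0,j,0} \otimes_{A_{i_0}} A_i)^h$.

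On each piece $\sph(B_j)$, the finitely presented sheaves $\F|_{\sph(B_j)}$ and $\G|_{\sph(B_j)}$ correspond, via the Henselian module-sheaf correspondence of \hfcite{\lemmodpullback}, to finitely presented $B_j$-modules $M_j, N_j$; the map $g$ restricts to a $B_j$-linear map $g_j: M_j \to N_j$. Because finitely presented modules (as well as morphisms between them) are finitely presented objects of their respective categories, the colimit description $B_j = \colim B_{i,j}$ lets us, after possibly enlarging $i_0$, descend each triple $(M_j, N_j, g_j)$ to $(M_{i_0,j}, N_{i_0,j}, g_{i_0,j})$ over $B_{i_0,j}$ so that base change along $B_{i_0,j} \to B_j$ recovers the original data; this is the standard descent of finitely presented modules under a filtered colimit of rings.

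The gluing step proceeds as in Lemma~\ref{lem:schemelem}: for each pair $j,k$ we cover $\sph(B_j) \cap \sph(B_k)$ by finitely many Henselian affine opens $U_{j,k,\ell}$ that are distinguished in both $\sph(B_j)$ and $\sph(B_k)$ (invoking \hfcite{\propab{} 4.2.1}), descend the transition isomorphisms identifying the restrictions of $M_j, N_j, g_j$ and $M_k, N_k, g_k$ on each $U_{j,k,\ell}$ using the same kind of claim established in the proof of Lemma~\ref{lem:schemelem}, and then arrange the cocycle condition on triple overlaps by a further increase of $i_0$. Gluing the local sheaves $\widetilde{M_{i_0,j}}$ and $\widetilde{N_{i_0,j}}$ with the maps $g_{i_0,j}$ produces finitely presented $\O_{X_{i_0}^h}$-modules $\F_{i_0}, \G_{i_0}$ and a morphism $g_{i_0}: \F_{i_0} \to \G_{i_0}$ whose pullback to $X^h$ recovers $g$.

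For the isomorphism clause, assume $g$ is an isomorphism and apply the above construction to $h = g^{-1}$, obtaining a descended map $h_{i_0}: \G_{i_0} \to \F_{i_0}$ after possibly increasing $i_0$ once more. The two compositions $h_{i_0} \circ g_{i_0}$ and $g_{i_0} \circ h_{i_0}$ agree with the identity after base change to $A$, so by the descent claim for equality of two finitely presented module maps (the same claim used in Lemma~\ref{lem:schemelem}), a final increase of $i_0$ forces both compositions to equal the identity, making $g_{i_0}$ an isomorphism. The main technical obstacle will be the bookkeeping of the cocycle compatibility during gluing on Henselian triple overlaps; the descent of finitely presented modules and maps under a filtered colimit of rings is standard.
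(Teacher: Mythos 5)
Your proposal is correct and follows essentially the same route as the paper: descend $X$ via the limit lemma for finitely presented schemes, descend the modules and the map on a finite Henselian affine cover using filtered-colimit descent of finitely presented modules (the paper cites \spcite{05LI}{Lemma} for exactly this), and glue after increasing the index to ensure compatibility on overlaps, just as in Lemmas~\ref{lem:schemelem} and~\ref{lem:subschemelem}. The only minor variation is your handling of the isomorphism clause by descending $g^{-1}$ and forcing the two composites to be identities at a finite stage, whereas the paper invokes the isomorphism statement of \spcite{05LI}{Lemma} directly; both are fine.
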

\begin{proof} 
We first consider the case of a Henselian affine scheme $V=\sph(B)$ for $B$ an \hfp $A$-algebra, such that $\F|_V \simeq \widetilde{M}$ for $M$ a finitely presented $B$-module and $\G|_V \simeq \widetilde{N}$ for another finitely presented $B$-module $N$, with the map of $\O_{X^h}$-modules $g: \F \to \G$ arising from a $B$-module map $f: M \to N$.

As in the proof of Lemma~\ref{lem:schemelem}, we can find an index $i_0$ and an \hfp $A_{i_0}$-algebra $B_{i_0}$ so that $B_{i_0} \otimes_{A_{i_0}} A \simeq B$. Letting $B_i = B_{i_0} \otimes_{A_{i_0}} A_i$ for $i \ge i_0$, we see that $\colim_{i \ge i_0} B_i \simeq B$. 

Then by \spcite{05LI}{Lemma} we can find an index $i_1$ and a map of finitely presented $B_{i_1}$-modules $f_{i_1}: M_{i_1} \to N_{i_1}$ such that $M \simeq M_{i_1} \otimes_{B_{i_1}} B, N \simeq N_{i_1} \otimes_{B_{i_1}} B$ and the map $f: M \to N$ is equal to the map $f_{i_1} \otimes 1: M_{i_1} \otimes_{B_{i_1}} B \to N_{i_1} \otimes_{B_{i_1}} B$. Furthermore, if $f$ is an isomorphism, we can ensure $f_{i_1}$ is as well \spcite{05LI}{Lemma}.

Similarly to the proof of Lemma~\ref{lem:schemelem}, it will be useful to note that if for two maps $f_{i_1}, f_{i_1}': M_{i_1} \to N_{i_1}$ the maps $f_{i_1} \otimes 1, f_{i_1}' \otimes_1:  M_{i_1} \otimes_{B_{i_1}} B \to N_{i_1} \otimes_{B_{i_1}} B$ are equal, then for some index $i_2 \ge i_1$ the maps $f_{i_1} \otimes 1, f_{i_1}' \otimes_1:  M_{i_1} \otimes_{B_{i_1}} B_{i_2} \to N_{i_1} \otimes_{B_{i_1}} B_{i_2}$ are equal. (This is proved by considering the images of some finite set of generators of the finitely presented $B_{i_1}$-module $M_{i_1}$ via the maps $f_{i_1},f_{i_1}'$.)

In the general case, we can cover $X^h$ with finitely many Henselian affine opens $V=\sph(B) \subset X^h$ (for $B$ an \hfp $A$-algebra) such that we have an isomorphism $\F|_V \simeq \widetilde{M}$ for $M$ a finitely presented $B$-module. Possibly shrinking the affine opens $V$, we can assume that we also have $\G|_V \simeq \widetilde{N}$ for another finitely presented $B$-module $N$, with the map of $\O_{X^h}$-modules $g: \F \to \G$ arising from a $B$-module map $f: M \to N$. 

As in Lemma~\ref{lem:schemelem}, we obtain an index $i_0$ and a finitely presented $A_{i_0}$-scheme $X_{i_0}$ such that $X_{i_0} \otimes_{A_{i_0}} A = X$. We can increase $i_0$ so for each of the Henselian affine open $V=\sph(B) \subseteq X^h$, the corresponding affine open in $X_{i_0}$ is $U_{i_0}=\spec(R_{i_0})$ for a finitely presented $A_{i_0}$-algebra $R_{i_0}$ such that $R_{i_0}^h \otimes_{A_{i_0}} A = B$. Possibly replacing $X_{i_0}$ with an open subscheme, we can ensure that $X_{i_0}$ is covered by the affine opens $U_{i_0}=\spec(R_{i_0})$ and $X_{i_0}^h$ is covered by Henselian affine opens $V_{i_0}=\sph(B_{i_0})$ for $B_{i_0}=R_{i_0}^h$.

By the affine case of this lemma, we can increase $i_0$ so that on each of the Henselian affine opens $V$, the map of $B$-modules $f: M \to N$ corresponding to $\F|_V \to \G|_V$ arises from a map of $B_{i_0}$-modules $f_{i_0}: M_{i_0} \to N_{i_0}$; these modules $M_{i_0}, N_{i_0}$ can be ``glued'' together similarly to the ``gluing'' of Henselian affine schemes in Lemma~\ref{lem:subschemelem} (possibly increasing $i_0$ to ensure compatibility on overlaps of the $V_{i_0}$) to get $\O_{X_{i_0}^h}$-modules $\F_{i_0},\G_{i_0}$. In the same way we may ``glue'' the maps $f_{i_0}$ to a map $g_{i_0}: \F_{i_0} \to \G_{i_0}$ (again, possibly increasing $i_0$ to ensure compatibility on overlaps) which will pull back to $g: \F \to \G$ by construction. 

Since for each $f: M \to N$ we can ensure that $f_{i_0}$ is an isomorphism if $f$ is, the same is true for $g_{i_0},g$.\end{proof}

\nocite{pop}

\printbibliography \end{document}